\documentclass[12pt,reqno]{amsart}

\usepackage{amscd,amssymb}
\usepackage{epsfig}
\usepackage{float}
\usepackage{url}
\usepackage[all]{xy}
\usepackage{tikz}
\newcommand{\N}{{\mathbb N}}
\newcommand{\Z}{{\mathbb Z}}
\newcommand{\Q}{{\mathbb Q}}
\newcommand{\C}{{\mathbb C}}

\newcommand{\F}{{\mathbb F}}
\renewcommand{\P}{{\mathbb P}}

\newcommand{\BB}{{\mathcal B}}

\newcommand{\EE}{{\mathcal E}}

\newcommand{\LL}{{\mathcal L}}

\newcommand{\OO}{{\mathcal O}}

\newcommand{\www}{\widetilde}

\newcommand{\oooo}{\overline}

\DeclareMathOperator{\Ann}{Ann}

\DeclareMathOperator{\Hom}{Hom}

\DeclareMathOperator{\mmod}{mod}
\DeclareMathOperator{\pr}{pr}

\DeclareMathOperator{\rank}{rank}

\begin{document}

\theoremstyle{plain}
\newtheorem{lemma}{Lemma}[section]
\newtheorem{definition/lemma}[lemma]{Definition/Lemma}
\newtheorem{theorem}[lemma]{Theorem}
\newtheorem{proposition}[lemma]{Proposition}
\newtheorem{corollary}[lemma]{Corollary}
\newtheorem{conjecture}[lemma]{Conjecture}
\newtheorem{conjectures}[lemma]{Conjectures}
\newtheorem{question}[lemma]{Question}

\theoremstyle{definition}
\newtheorem{definition}[lemma]{Definition}
\newtheorem{withouttitle}[lemma]{}
\newtheorem{remark}[lemma]{Remark}
\newtheorem{remarks}[lemma]{Remarks}
\newtheorem{example}[lemma]{Example}
\newtheorem{examples}[lemma]{Examples}
\newtheorem{notations}[lemma]{Notations}
\newtheorem{problem}[lemma]{Problem}

\title[Semigroups from full lattices in $\Q$-algebras]
{Semigroups from full lattices
in commutative $\Q$-algebras} 

\author{Claus Hertling and Khadija Larabi}

\address{Claus Hertling\\
Lehrstuhl f\"ur algebraische Geometrie, 
Universit\"at Mannheim,
B6 26, 68159 Mannheim, Germany}

\email{hertling@math.uni-mannheim.de}

\email{khadija.larabi@outlook.com}

\date{February 16, 2026}

\subjclass[2020]{16H15, 16H20, 20M14, 11R54, 13C20}

\keywords{Commutative $\Q$-algebra, full lattice, order,
commutative semigroup, Jordan-Zassenhaus theorem}


\begin{abstract}
{\Small The full lattices in a finite dimensional commutative
$\Q$-algebra form a commutative semigroup. In the case of an
algebraic number field the top part of a certain
quotient semigroup is the class group.
For a separable algebra some basic results, especially the
Jordan-Zassenhaus theorem, are known for this quotient semigroup.
This paper considers also algebras which are not separable. 
It studies the commutative semigroup of full lattices
in such an algebra and also the quotient semigroup.
This leads in this commutative, but not separable 
situation to a certain extension of
the Jordan-Zassenhaus theorem. One application
concerns $GL_n(\Z)$-conjugacy classes of regular integer 
$n\times n$ matrices.}
\end{abstract}

\maketitle

\tableofcontents

\setcounter{section}{0}

\section{Introduction}\label{s1}
\setcounter{equation}{0}
\setcounter{table}{0}

One motivation for the results in this paper is the following
problem.

\begin{problem}\label{t1.1}
Let $n\in\Z_{\geq 2}$, and let $J\in M_{n\times n}(\C)$ be a
matrix in Jordan normal form which has for each eigenvalue
only one Jordan block and whose characteristic polynomial
$f$ is in $\Z[t]$. Study the set $S_{1,f}$ of 
$GL_n(\Z)$-conjugacy classes of matrices in $M_{n\times n}(\Z)$
which are $GL_n(\C)$-conjugate to $J$, and appreciate all 
structure which this set has.
\end{problem}

The set $S_{1,f}$ is not empty, because it contains the 
$GL_n(\Z)$-conjugacy class of the companion matrix
$M_f:=\begin{pmatrix} & & & -f_0 \\ 1  & & & -f_1 \\
 & \ddots & & \vdots \\ & & 1 & -f_{n-1}\end{pmatrix}$
(with zeros at empty places) of the polynomial
$f=t^n+f_{n-1}t^{n-1}+...+f_1t+f_0\in\Z[t]$. 
In fact, each matrix in $M_{n\times n}(\Z)$, which is 
$GL_n(\C)$-conjugate to $J$, is $GL_n(\Q)$-conjugate to $M_f$,
but not necessarily $GL_n(\Z)$-conjugate to $M_f$. 

By the Jordan-Zassenhaus theorem \cite{Za38} the set $S_{1,f}$
is finite if $J$ is semisimple, so if $f$ has simple roots.
Else it is infinite.

One result of this paper is a finiteness statement also for the
case when $J$ is not semisimple and $S_{1,f}$ is infinite.
It can be seen as an extension of the Jordan-Zassenhaus theorem
to this situation. See below Theorem 1.3 (a).

In any case, the set $S_{1,f}$ is a commutative semigroup
and comes equipped additionally with a division map.
We are interested in these structures. At the end of this
introduction we come back to $S_{1,f}$ and explain this.

Now we introduce the main objects of this paper.
Throughout the whole paper $A$ will be a finite dimensional
commutative $\Q$-algebra with unit element $1_A$. The structure of
$A$ is not difficult. It splits into a direct sum $A=F\oplus R$
with $F$ a sum of algebraic number fields and $R$ the radical,
which is the ideal of all nilpotent elements.
$A$ is separable if and only if $A=F$, so $R=0$. Define
\begin{eqnarray*}
\LL(A):= \{L\subset A\,|\, L\textup{ is a }
\Z\textup{-lattice which generates }A\textup{ over }\Q\}.
\end{eqnarray*}
The elements of $A$ are called {\it full lattices}.
The multiplication in $A$ induces a multiplication and a 
division map on $\LL(A)$. If $L_1$ and $L_2$ are two full 
lattices then
\begin{eqnarray*}
L_1\cdot L_2&:=&\{\sum_{i\in I}a_ib_i\,|\, I\textup{ a finite 
index set}, a_i\in L_1,b_i\in L_2\}\\
\textup{and}\quad L_1:L_2&:=& \{a\in A\,|\, aL_2\subset L_1\}
\end{eqnarray*}
are also full lattices. $\LL(A)$ with this multiplication
is a commutative semigroup. A full lattice $\Lambda$ in $A$
is an {\it order} if
\begin{eqnarray*}
1_A\in\Lambda\quad\textup{and}\quad \Lambda\cdot\Lambda
\subset\Lambda\quad(\textup{then }\Lambda\cdot\Lambda=\Lambda).
\end{eqnarray*}
For each full lattice $L$ in $A$
$$\OO(L):=L:L$$
is an order and is called {\it the order of }$L$.

Though $\LL(A)$ is (too) big. A natural equivalence relation on
$\LL(A)$ is the following $\varepsilon$-equivalence,
\begin{eqnarray*}
L_1\sim_\varepsilon L_2\ \Leftrightarrow_{\textup{Def.}}\ 
\exists\ a\in A^{unit}\textup{ with }aL_1=L_2,\\
\textup{where}\quad A^{unit}:=(\textup{the group of units in }A).
\end{eqnarray*}
Multiplication and division map on $\LL(A)$ descend to the
quotient
$$\EE(A):=\LL(A)/\sim_\varepsilon,$$
so especially also $\EE(A)$ is a commutative semigroup.
Our results are on the semigroups $\LL(A)$ and $\EE(A)$.

In order to formulate them we have to say a few words about
the structure of an arbitrary commutative semigroup $S$.
An element $c\in S$ with $cc=c$ is called an {\it idempotent}.
An element $a\in S$ is called {\it invertible} if an idempotent
$c$ and an element $b\in S$ with $ac=a$ and $ab=c$ exist.
Then $c$ is unique and is called $e_a$, and a unique
invertible element $a^{-1}\in S$ with $aa^{-1}=e_a$, 
$a^{-1}e_a=a^{-1}$ and $e_{a^{-1}}=e_a$ exists.
Then for each idempotent $c$ the set
$$G(c):=\{a\in S\,|\, a\textup{ invertible with }e_a=c\}$$
is a group with unit element $c$. The subset
$\bigcup_{c\textup{ idempotent}}G(c)$ of $S$ of all invertible
elements is a subsemigroup of $S$. 
An natural equivalence relation on $S$ is the following
$w$-equivalence,
\begin{eqnarray*}
a\sim_w b &\Leftrightarrow_{\textup{Def.}}& a=b\quad\textup{or}\\
&& \exists\ x_1,x_2\in S\textup{ with }ax_1=b,bx_2=a.
\end{eqnarray*}
Then $W(S):=S/\sim_w$ is a quotient semigroup, and for
an idempotent $c\in S$ $G(c)=[c]_w$.

\begin{lemma}\label{t1.2}
(a) The idempotents in $\LL(A)$ are the orders.
The idempotents in $\EE(A)$ are the $\varepsilon$-classes
of the orders.

(b) $L$ is invertible in $\LL(A)$ if and only if 
$[L]_\varepsilon$ is invertible in $\EE(A)$.
Then $e_L=\OO(L)$ and $e_{[L]_\varepsilon}=[\OO(L)]_\varepsilon$.

(c) $W(\LL(A))=W(\EE(A))$. Also the division map on $\LL(A)$
descends to a division map on $W(\LL(A))$.
\end{lemma}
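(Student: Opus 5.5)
We first handle (a). If $\Lambda$ is an order, then $1_A\in\Lambda$ gives $\Lambda\subset\Lambda\cdot\Lambda$, and $\Lambda\cdot\Lambda\subset\Lambda$ gives the reverse inclusion, so $\Lambda\cdot\Lambda=\Lambda$. Conversely, suppose $L\cdot L=L$. Then $L\subset\OO(L)$. Take a $\Q$-basis of $A$ inside $L$; multiplication by this basis shows that $1_A\in\Q L$. The natural route to $1_A\in L$ is a determinant (Cayley--Hamilton) argument: $LL=L$ means $L\cdot L\subset L$, so every $a\in L$ lies in $\OO(L)$. Since $L$ is a finitely generated $\OO(L)$-module with $L=L\cdot L$, the ideal $I:=L\cap\OO(L)$-style argument suggests applying Nakayama's determinant trick over the commutative ring $\OO(L)$. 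Concretely, $L$ is a finitely generated faithful $\OO(L)$-module, $L\subset\OO(L)$ is an ideal, and $L\cdot L=L$. The determinant trick gives $x\in L$ with $(1-x)L=0$. Faithfulness of $L$ in $A$ (since $L$ spans $A$) then forces $1=x\in L$. So $L$ is an order.

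For $\EE(A)$: if $[L]_\varepsilon$ is idempotent, then $L\cdot L=aL$ for some unit $a$. We then want a unit $b$ with $bL$ an order. The plan is to show that $a^{-1}L$ is idempotent. We have $(a^{-1}L)(a^{-1}L)=a^{-2}aL=a^{-1}L$, so $a^{-1}L$ is an order by the first part, and $[L]_\varepsilon=[a^{-1}L]_\varepsilon$. The converse is clear.

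For (b), if $L$ is invertible in $\LL(A)$, then projecting to $\EE(A)$ preserves the identities $Lc=L$ and $Lb=c$. Conversely, suppose $[L][M]=[\Lambda]$ and $[L][\Lambda]=[L]$ with $\Lambda$ an order, which we may choose by (a). Then $LM=u\Lambda$ and $L\Lambda=vL$ for units $u,v$. The hard step is the second equation: we must remove the unit $v$. I would first show that $L\Lambda\supset L$ (because $1\in\Lambda$), so $vL\supset L$. Comparing indices, $[vL:L]$ is well defined, and multiplication by $v$ has determinant of absolute value $1$: iterating gives $L\Lambda=L\Lambda\Lambda=v^2L$, so $vL=v^2L$ and hence $L=vL$. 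Then $L\Lambda=L$ and $L\cdot(u^{-1}M)=\Lambda$, so $L$ is invertible with $e_L=\Lambda$. Finally, $e_L=\OO(L)$: from $L\Lambda=L$ we get $\Lambda\subset\OO(L)$. Conversely, $\OO(L)=\OO(L)\Lambda=\OO(L)LL^{-1}\subset LL^{-1}=\Lambda$.

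For (c), the quotient map $\pi\colon\LL(A)\to\EE(A)$ is a surjective homomorphism. We show that $L_1\sim_w L_2$ in $\LL(A)$ if and only if $[L_1]\sim_w[L_2]$ in $\EE(A)$. The forward direction is immediate. For the reverse, suppose $L_1X_1=aL_2$ and $L_2X_2=bL_1$. Then $L_1(a^{-1}X_1)=L_2$ and $L_2(b^{-1}X_2)=L_1$, since $a^{-1}X_1$ is again a full lattice. So the $w$-classes on $\LL(A)$ are unions of $\varepsilon$-classes and $W(\LL(A))=W(\EE(A))$. For the division map, we check that $L_1\sim_w L_1'$ implies $(L_1:L_2)\sim_w(L_1':L_2)$, and similarly in the second argument. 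If $L_1'=L_1X$, then $(L_1:L_2)X\subset L_1X:L_2=L_1':L_2$. Equality need not hold, so here I would instead use the chain $L_1'=L_1X$, $L_1=L_1'Y$ to get $L_1=L_1XY$. This gives $(L_1:L_2)XY\subset L_1:L_2$ and $(L_1:L_2)\subset (L_1':L_2)Y$ via $\OO$-module manipulations. I expect this bookkeeping to be the main obstacle, possibly requiring multiplication by $\OO(L_1)$ to absorb the inclusions into genuine equalities.
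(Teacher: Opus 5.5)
Your parts (a) and (b) are correct. In (a), your determinant trick over $\OO(L)$ is exactly Krull's lemma (Lemma \ref{t5.2} (c)) as the paper uses it in Theorem \ref{t5.6}. Your rescaling by $a^{-1}$ is the paper's argument for $\EE(A)$.

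In (b) your route differs slightly from the paper's. You remove the unit via $vL=L\Lambda=L\Lambda\Lambda=v^2L$ and only afterwards identify $\Lambda=\OO(L)$. The paper first proves $\Lambda=\OO(L)$, from $aLL_1=\Lambda$ and $\OO(L)=\OO(L\Lambda)\supset\Lambda$, and then reads off $L\Lambda=L$. Both are fine, and your remark on indices and $|\det\mu_v|=1$ can be dropped.

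In the first half of (c), the claim that $w$-classes are unions of $\varepsilon$-classes also needs the case $L_2=aL_1$. It follows from $L_2=L_1\cdot a\OO(L_1)$ and $L_1=L_2\cdot a^{-1}\OO(L_1)$.

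The genuine gap is the descent of the division map, which you leave open. From arbitrary $X,Y$ with $L_1'=L_1X$ and $L_1=L_1'Y$ you only learn $XY\subset\OO(L_1)$, and in general $1_A\notin XY$. For instance, take $A=\Q\oplus\Q$, $L_1=L_1'=\Z\oplus\Z$ and $X=Y=\Z(1,2)+\Z(0,5)$; then $XY=\Z(1,4)+\Z(0,5)$. So multiplying by $\OO(L_1)$ does not turn $(L_1:L_2)XY\subset(L_1':L_2)Y\subset L_1:L_2$ into equalities, and your inclusion $(L_1:L_2)\subset(L_1':L_2)Y$ is unproved.

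What is missing is a second application of Krull's lemma, which the paper provides in Theorem \ref{t5.7} (a),(b). If $L_1\sim_w L_1'$, then $L_1(L_1':L_1)=L_1'$ and $L_1'(L_1:L_1')=L_1$. Hence $\Lambda:=(L_1:L_1')(L_1':L_1)$ satisfies $\Lambda\Lambda\subset\Lambda$ and $\Lambda L_1'=L_1'$, so $1_A\in\Lambda$ and $\Lambda=\OO(L_1)=\OO(L_1')$. Now take $X:=L_1':L_1$ and $Y:=L_1:L_1'$, so that $XY=\OO(L_1)$. Then \eqref{5.3} gives $L_1:L_2=(L_1:L_2)XY\subset(L_1':L_2)Y\subset L_1:L_2$, hence $L_1:L_2=(L_1':L_2)Y$. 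Symmetrically $L_1':L_2=(L_1:L_2)X$. The second argument is treated the same way using $\OO(L_1:L_2)\supset\OO(L_2)$.

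Alternatively you can keep your own $X,Y$. The set $I:=XY\cdot\OO(L_1)$ is an ideal of $\OO(L_1)$ with $IL_1=L_1$, so the determinant trick from your part (a) gives $1_A\in I$. Then $M\cdot XY=M\cdot I=M$ for $M=L_1:L_2$, and the same holds with $YX$ and $L_1'$.

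The paper handles both arguments at once. For $L_3\sim_w L_5$ and $L_4\sim_w L_6$ it proves the identity $L_3:L_4=(L_3:L_5)(L_5:L_6)(L_6:L_4)$.
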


The following theorem collects the five main results of this
paper. After the theorem we comment upon them.

\begin{theorem}\label{t1.3}
(a) (Theorem \ref{t6.3}) For each order $\Lambda$ the set
$$\{[L]_\varepsilon\,|\, L\in\LL(A),\OO(L)=\Lambda\}\subset
\EE(A)$$
is finite.

(b) (Theorem \ref{t5.8} (b)) For each order $\Lambda$ the
set $\{[L]_\varepsilon\,|\, L\in\LL(A),\OO(L)=\Lambda\}$
splits into finitely many $w$-classes, which have all the same
(finite by part (a)) size. One of them is the group
$G([\Lambda]_\varepsilon)$.

(c) (Part of Theorem \ref{t8.2}) Let $\Lambda_1$ and $\Lambda_2$
be orders with $\Lambda_2\subset\Lambda_1$. There are natural
surjective group homomorphisms
$G(\Lambda_2)\to G(\Lambda_1)$, $L\mapsto\Lambda_1L$, and
$G([\Lambda_2]_\varepsilon)\to G([\Lambda_1]_\varepsilon)$,
$[L]_\varepsilon\mapsto [\Lambda_1L]_\varepsilon$.

(d) (Theorem \ref{t9.2}) The projection $A\to F$ with respect
to the splitting $A=F\oplus R$ induces surjective 
homomorphisms $\LL(A)\to \LL(F)$ and $\EE(A)\to\EE(F)$
of semigroups. For each order $\Lambda$, they restrict to a 
surjective group homomorphism $G(\Lambda)\to G(\pr_F\Lambda)$
and to an isomorphism 
$$G([\Lambda]_\varepsilon)\to G([\pr_F\Lambda]_\varepsilon)$$ 
of finite groups.

(e) (Theorem \ref{t10.1}) Suppose $\dim A\geq 2$.
For each full lattice $L$ in $A$ each power $L^k$ with
$k\geq \dim A-1$ is invertible.
\end{theorem}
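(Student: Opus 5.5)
The plan is to prove the five parts in the order (c), (d), (b), (a), (e). Parts (a) and (b) rest on an action of the groups $G(\cdot)$, and that action is easiest to control once the local description of invertibility and the passage $A\to F$ are in place. The basic tool throughout is localization. For a full lattice $L$ and a prime $p$ put $L_p:=L\otimes\Z_p\subset A_p:=A\otimes\Q_p$. I would first prove a local criterion: $L$ is invertible in $\LL(A)$ if and only if $L_p=a_p\OO(L)_p$ with $a_p\in A_p^{unit}$ for every $p$, and for almost all $p$ one may take $a_p=1$. The direction "invertible $\Rightarrow$ locally principal" uses that $\OO(L)_p$ is a finite product of local rings, on each of which an invertible ideal is principal. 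Lemma \ref{t1.2} then lets us work with $\Lambda=\OO(L)$ as the unit of $G(\Lambda)$. It follows that
$$G(\Lambda)\cong\bigoplus_p A_p^{unit}/\Lambda_p^{unit},$$
and that $G([\Lambda]_\varepsilon)$ is the quotient of this by the image of $A^{unit}$.

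\textbf{Part (c).} Multiplication by $\Lambda_1$ corresponds to the natural maps
$$A_p^{unit}/\Lambda_{2,p}^{unit}\to A_p^{unit}/\Lambda_{1,p}^{unit}.$$
These maps are visibly surjective. The map on $\varepsilon$-classes is then surjective as a quotient of a surjection.

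\textbf{Part (d).} Since $\pr_F$ is a ring homomorphism, it maps products to products. For surjectivity of $\LL(A)\to\LL(F)$, given $M\in\LL(F)$ I would take $M\oplus(\text{any lattice in }R)$. Locally, $A_p^{unit}=F_p^{unit}\cdot(1+R_p)$, which gives surjectivity $G(\Lambda)\to G(\pr_F\Lambda)$. For injectivity on $\varepsilon$-classes, I would show that the kernel, namely
$$\bigoplus_p (1+R_p)\Lambda_p^{unit}/\Lambda_p^{unit},$$
is hit by the global group $1+R$. The reason is that $1+R$ is a unipotent group, so strong approximation holds: $1+R$ is dense in $\prod_p'(1+R_p)$ restricted to any finite set of primes, and $\Lambda_p\supset 1+(R\cap\Lambda)_p$ is open. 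Finiteness of $G([\pr_F\Lambda]_\varepsilon)$ is the classical Jordan–Zassenhaus theorem, equivalently the finiteness of the Picard group of an order in $F$.

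\textbf{Part (b).} The finite group $G([\Lambda]_\varepsilon)$ acts on the set $S_\Lambda:=\{[L]_\varepsilon\,|\,\OO(L)=\Lambda\}$ by multiplication. I would check locally that $\OO(ML)=\Lambda$ for $M\in G(\Lambda)$. I would then show that the orbits are exactly the $w$-classes inside $S_\Lambda$. If $LX=L'$ and $L'Y=L$ with all orders equal to $\Lambda$, then $\Lambda:L\cdot L'$ should be invertible, again checked prime by prime. The action is free because $ML\sim_\varepsilon L$ together with $\OO(L)=\Lambda$ forces $M\sim_\varepsilon\Lambda$: apply $\Lambda:L$. Free action gives orbits of equal size, and the orbit of $[\Lambda]_\varepsilon$ is the group itself. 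The number of orbits is finite once (a) is proved.

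\textbf{Part (a)} is the main obstacle, since the classical proof (Minkowski bounds on norms) only sees $F$. I would reduce modulo the radical filtration $R\supset R^2\supset\dots\supset R^{m}=0$ and induct on $m$. By (d) and Jordan–Zassenhaus, finitely many classes occur for $\pr_F L$. Fix the image class modulo $R^j$. The lattices with order $\Lambda$ lifting it modulo $R^{j+1}$ differ by a piece in $R^j/R^{j+1}$. The freedom to multiply by units $1+r$ with $r\in R^j$ should make this piece range over a quotient of a finitely generated $\Z$-module by a subgroup of finite index. The order condition should provide that finite index: $\Lambda\cap R^j$ is a full lattice in $R^j$ that stabilizes the piece. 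The point I would need to check most carefully is this index bound. A first attempt would choose $N\in\Z_{>0}$ with $N\cdot(R^j\cap\Lambda^\vee)\subset L$ and show that only residues modulo $N$ matter.

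\textbf{Part (e).} I would follow the Dade–Taussky–Zassenhaus argument prime by prime. The orders $\OO(L^k)$ increase with $k$. Locally at $p$, one seeks $a\in L_p$ with $L_p^{k+1}=aL_p^k$ for $k\geq n-1$, where $n=\dim A$. Such an $a$ exists after a finite unramified extension of $\Z_p$ with large residue field, by choosing $a$ outside finitely many proper subspaces. A Cayley–Hamilton or Hilbert-function argument then shows that $L^k$ has $n$-dimensional quotients that stabilize after $n-1$ steps. Once $L^{k+1}=aL^k$ holds for $k\geq n-1$, the lattice $L^k$ is locally principal over $\OO(L^k)$, and it descends back to $\Z_p$ by faithfully flat base change. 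The local criterion then gives invertibility of $L^k$.
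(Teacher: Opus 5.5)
\textbf{Part (a) has a genuine gap.} In your inductive step, the only property of the order you invoke is that $\Lambda\cap R^j$ stabilizes the new piece, i.e.\ $\Lambda L\subset L$. You never use exactness $\OO(L)=\Lambda$ to bound anything from above, and without it the statement is false. Faddeev's orders $\Lambda_m=\Lambda+\sum_l 2^{-lm}(\Lambda\cap R)^l$ of Theorem \ref{t6.4} are all stabilized by $\Lambda$ and all have image $\pr_F\Lambda$ modulo $R$. Yet already their images in $A/R^2$ are pairwise distinct orders, since their layer in $R/R^2$ is $2^{-m}$ times the image of $\Lambda\cap R$. Hence they are pairwise not $\varepsilon$-equivalent by \eqref{6.1}. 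So a step from $R^j$ to $R^{j+1}$ built only on stabilization would prove a false finiteness statement already for $j=1$.

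The units $1+r$, $r\in R^j$, cannot compensate. They leave the layer lattice $(L\cap R^j+R^{j+1})/R^{j+1}$ unchanged, because $r(L\cap R^j)\subset R^{2j}\subset R^{j+1}$, and they only shift gluing data by $F$-linear maps. So the pieces do not range over a quotient of one fixed finitely generated $\Z$-module, and your $N$ with $N(R^j\cap\Lambda^\vee)\subset L$ gives only a lower bound. There is also a structural obstacle: exactness does not descend along $A\to A/R^{j+1}$, since the order of the image of $L$ can be strictly bigger than the image of $\Lambda$.

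In the paper, the missing upper bound comes from the bottom of the socle filtration $S_l=\Ann(R^l)$ of Lemma \ref{t3.3}, not from the top of the radical filtration. First normalize $L^{[0]}=K^i$ by Jordan--Zassenhaus in $F$, and normalize $1_A\in L$; for exact $L$ this is equivalent to $L\supset\Lambda$, which supplies the lower bound. Elements of $S_1$ act on $L$ only through $L^{[0]}$, so exactness gives the \emph{equality} $\Lambda\cap S_1=(L\cap S_1):K^i$. Dualizing via Lemma \ref{t2.3}(b) and using invertibility of $K^i\Lambda_{max}(F)$ then bounds $L\cap S_1$ from above. The bound climbs the socle filtration because $R^{[1]}\times S_{[m+1]}\to S_{[m]}$ is nondegenerate \eqref{3.10}, which gives $L_{[m+1]}\subset L_{[m]}:\Lambda^{[1]}$. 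Lemma \ref{t2.4}(c) concludes.

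\textbf{Part (b).} Your $(\Lambda:L)L'$ and multiplying by $\Lambda:L$ are correct only for invertible $L$. For non-invertible $L$, Krull's lemma (Lemma \ref{t5.2}(c)) gives $L(\Lambda:L)L\subsetneqq L$. Instead use $L'=(L':L)L$ with $L':L\in G(\Lambda)$ (Theorem \ref{t5.7}). For freeness, use $(ML):L=M$ for $M\in G(\Lambda)$, so $ML=aL$ forces $M=a\Lambda$.

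\textbf{Parts (c) and (d).} Part (c) and the surjectivity in (d) are the paper's localize-and-glue arguments. Your strong-approximation proof of injectivity in (d), which the paper only cites from Faddeev, is fine once you add two things: units lift along $\Lambda_p\to(\pr_F\Lambda)_p$ because the kernel is nil, and you first twist by $F^{unit}$.

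\textbf{Part (e).} Your reduction-number route differs from the paper's, and your passage to a large residue field is genuinely needed. The choice of $b_p$ in Theorem \ref{t10.3}(b) applies prime avoidance to a subspace that is not an ideal, and this fails. Take $A=\Q^3$, $\Lambda=\Z^3$, $L=\{x\in\Z^3\,|\,x_1+x_2+x_3\in 2\Z\}$ and $p=2$. Here $\Lambda L=\Lambda$, but no element of $L$ is a unit modulo $2\Lambda$. In fact no $L_2\sim_w L$ has $1_A\in L_2\subset\Lambda$.
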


Special cases of these results are old and were proved mainly
in the 1960ies in a paper \cite{DTZ62} of Dade, Taussky and
Zassenhaus, in several papers \cite{Fa67}\cite{Fa68}
of Faddeev, and in the book \cite{Ne99} of Neukirch.

Part (a) for $A$ separable follows from the Jordan-Zassenhaus
theorem \cite{Za38}. Therefore it can be considered as an
extension of the Jordan-Zassenhaus theorem. For $A$ with
radical $R\neq 0$ with $R^2=0$ it is proved in \cite{Fa67}.
Part (b) is new.
Part (c) (and all of Theorem \ref{t8.2}) for an algebraic 
number field $A$ is proved in \cite{Ne99}.
Part (d) is new except for the injectivity of the group 
homomorphism $G([\Lambda]_\varepsilon)\to 
G([\pr_F\Lambda]_\varepsilon)$ which is proved in \cite{Fa68}.
Part (e) for $A$ an algebraic number field is the main result
of \cite{DTZ62}.

The five parts of Theorem \ref{t1.3} are proved in the sections
\ref{s5} (at the end of it), \ref{s6}, \ref{s8}, \ref{s9} and 
\ref{s10}. The sections \ref{s2}, \ref{s3}, \ref{s4}, \ref{s5}
and \ref{s7} give background material.
The sections \ref{s2} and \ref{s3} are rather elementary. 
The sections \ref{s4} and \ref{s5} (except for Theorem 
\ref{t5.8}) are condensed from \cite{DTZ62} 
(but there the assumptions are more restrictive),
the first half of section \ref{s7} is from \cite{Fa65}.

\cite{DTZ62} and \cite{Ne99} consider only an algebraic 
number field $A$ and work with localizations by prime ideals
of orders in it. In section \ref{s7} we follow \cite{Fa65}
and introduce another family of localizations which works
for arbitrary $A$. It is used in the proofs of the parts
(c), (d) and (e) in the sections \ref{s8}, \ref{s9} and 
\ref{s10}.

The five main results in Theorem \ref{t1.3} are supported by
the two more technical Theorems \ref{t7.6} and \ref{t10.2}.

Two points which deserve further study are (1) the division 
maps in $\LL(A)$ and $\EE(A)$ and (2) the properties of
not invertible elements in the semigroups $\LL(A)$ and $\EE(A)$.
With respect to (2) we only have Theorem \ref{t1.3} (e).

Finally we come back to the set $S_{1,f}$ where $f\in\Z[t]$ is
unitary of some degree $n\in\Z_{\geq 2}$. Consider
$$A_f:=\frac{\Q[t]}{(f)_{\Q[t]}}\supset\Lambda_f :=
\frac{\Z[t]}{(f)_{\Z[t]}}.$$
$A_f$ is an $n$-dimensional commutative $\Q$-algebra with
unit element $1_A$, $\Lambda_f$ is an order in $A_f$. Define the set
$$S_{2,f}:=\{[L]_\varepsilon\,|\, L\in\LL(A),\OO(L)\supset
\Lambda_f\}.$$
It is a subsemigroup of $\EE(A)$, and also the division map on
$\EE(A_f)$ restricts to it. The following old observation is
elementary, but crucial.

\begin{lemma}\label{t1.4}\cite{LMD33}
There is a natural 1:1-correspondence between the sets 
$S_{1,f}$ and $S_{2,f}$. 

From an $\varepsilon$-class in $S_{2,f}$ to a conjugacy class
in $S_{1,f}$: Choose $L$ in the $\varepsilon$-class in $S_{2,f}$.
Choose a $\Z$-basis $(b_1,...,b_n)$ of $L$. Then
\begin{eqnarray*}
(\oooo{t}b_1,...,\oooo{t}b_n)
=(b_1,...,b_n)\cdot B\quad\textup{for some matrix}\quad
B\in M_{n\times n}(\Z).
\end{eqnarray*}
The $GL_n(\Z)$-conjugacy class of $B$ is in $S_{1,f}$.
\end{lemma}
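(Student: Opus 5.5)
We will prove Lemma \ref{t1.4} by setting up mutually inverse maps. The key identification is this: a $\Z$-lattice $L\subset A_f$ with $\OO(L)\supset\Lambda_f$ is the same thing as a $\Z$-lattice of rank $n$ in $A_f$ that is stable under multiplication by $\oooo{t}$. Indeed $\Lambda_f=\Z[\oooo{t}]$, so $\oooo{t}L\subset L$ is equivalent to $\Lambda_f\subset L:L$. Any $\Z$-lattice of rank $n$ in the $n$-dimensional $A_f$ generates $A_f$ over $\Q$, so it is automatically full.

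\emph{The map $S_{2,f}\to S_{1,f}$.} Let $L$ be such a lattice with $\Z$-basis $(b_1,...,b_n)$. Multiplication by $\oooo{t}$ preserves $L$, so it is represented by a matrix $B\in M_{n\times n}(\Z)$. Since $(b_i)$ is also a $\Q$-basis of $A_f$, the matrix $B$ represents multiplication by $\oooo{t}$ on $A_f$. In the basis $(1,\oooo{t},...,\oooo{t}^{n-1})$ that multiplication is represented by $M_f$. Hence $B$ is $GL_n(\Q)$-conjugate to $M_f$. Now $M_f$ is $GL_n(\C)$-conjugate to $J$, because a companion matrix is regular with characteristic polynomial $f$. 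So the class $[B]$ lies in $S_{1,f}$.

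Well-definedness has two parts. A change of $\Z$-basis replaces $B$ by $T^{-1}BT$ with $T\in GL_n(\Z)$. Replacing $L$ by $aL$ with $a\in A_f^{unit}$ and using the basis $(ab_i)$ gives the same $B$, since $\oooo{t}ab_i=a\,\oooo{t}b_i$ by commutativity.

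\emph{The inverse map $S_{1,f}\to S_{2,f}$.} Let $B\in M_{n\times n}(\Z)$ be $GL_n(\C)$-conjugate to $J$. Its characteristic polynomial is $f$. Because $J$ has exactly one Jordan block per eigenvalue, $B$ is regular, so its minimal polynomial equals $f$. Then $\Q^n$ is a cyclic $\Q[t]$-module with $t$ acting by $B$, hence isomorphic to $A_f$ as a $\Q[t]$-module. Fix such an isomorphism $\varphi:\Q^n\to A_f$ and put $L:=\varphi(\Z^n)$. This is a full lattice, stable under $\oooo{t}$ since $B\Z^n\subset\Z^n$, so $[L]_\varepsilon\in S_{2,f}$.

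The isomorphism $\varphi$ is unique up to $\Q[t]$-automorphisms of $A_f$. These are multiplications by units $a\in A_f^{unit}$, since $\Q[t]$-endomorphisms of the cyclic module $A_f$ are multiplications by elements of $A_f$. So $[L]_\varepsilon$ is well defined. Replacing $B$ by $T^{-1}BT$ with $T\in GL_n(\Z)$ changes $\varphi$ by $T$ but leaves the image $L$ unchanged, since $T\Z^n=\Z^n$. Hence the class depends only on $[B]$.

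\emph{The maps are mutually inverse.} Start from $L$ and its basis $(b_i)$, and take $\varphi(e_i)=b_i$. Then $\varphi$ intertwines $B$ with $\oooo{t}$, so it is an admissible isomorphism for $B$, and the second map returns $[L]_\varepsilon$. Conversely, start from $B$ and form $L=\varphi(\Z^n)$ with basis $\varphi(e_i)$. Because $\varphi$ intertwines $B$ with $\oooo{t}$, the first map recovers exactly $B$.

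The main obstacle is the step identifying $\Q^n$ with $A_f$ as $\Q[t]$-modules, which needs minimal polynomial $=f$. This is exactly where the hypothesis of one Jordan block per eigenvalue enters. A secondary point is the description of $\operatorname{Aut}_{\Q[t]}(A_f)$ as $A_f^{unit}$.
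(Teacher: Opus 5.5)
Your proof is correct and complete: identifying $\Lambda_f$-ideals with $\oooo{t}$-stable full lattices, using cyclicity of $\Q^n$ under $B$ (which is exactly where the one-Jordan-block-per-eigenvalue hypothesis enters), and using $\mathrm{Aut}_{\Q[t]}(A_f)=A_f^{unit}$ for well-definedness is precisely what is needed. The paper gives no proof of its own and simply cites \cite{LMD33}; your isomorphism $\varphi$ with $\varphi(e_i)=b_i$ is the classical Latimer--MacDuffee ``eigenvector'' $(b_1,\dots,b_n)$ with entries in $A_f$ satisfying $(b_1,\dots,b_n)B=\oooo{t}\,(b_1,\dots,b_n)$, so your argument is the standard one.
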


Therefore also $S_{1,f}$ is a commutative semigroup with a 
division map. We have no direct applications of these structures
on $S_{1,f}$, but we find them fascinating.

This paper does not contain examples. But it is accompanied by
another longer paper \cite{HL26} which contains a lot of 
examples: All cases with $n=2$, many cases with $n=3$ and
some cases with arbitrary $n$.

We finish the introduction with some notations.

\begin{notations}\label{t1.5}
(a) Throughout the whole paper $A$ is a finite dimensional 
commutative $\Q$-algebra with unit element $1_A$.

(b) The annihilator $\Ann(B)$ of a subset $B\subset A$
is the $\Q$-vector space $\Ann(B):=\{a\in A\,|\, ab=0
\textup{ for }b\in B\}.$

(c) $\N=\{1,2,...\}$,
$\P:=\{\textup{prime numbers in }\N\}$,\\
$\F_p=\Z/p\Z$ the field with $p$ elements for $p\in\P$.

(d) For a commutative ring $R$ with unit element $1_R$, the group of its units
is called $R^{unit}:=\{a\in R\,|\, \exists\ b\in R\textup{ with }ab=1_R\}$.
\end{notations}

\section{Full lattices in finite dimensional $\Q$-vector spaces}
\label{s2}
\setcounter{equation}{0}
\setcounter{table}{0}

This section starts with a definition and then presents several
elementary lemmas which will be useful later.

\begin{definition}\label{t2.1}
Let $V$ be finite dimensional $\Q$-vector space.

(a) A {\it lattice in $V$} is a finitely generated
$\Z$-module $L\subset V$. 
A {\it full lattice in $V$} is a lattice in $V$ which generates
$V$ over $\Q$. 
The set of all full lattices in $V$ is called $\LL(V)$.

(b) The dual space of $V$ is denoted $V^*:=\Hom_\Q(V,\Q)$. 
For each subset $M\subset V$ define 
$M^{*\Z}:=\{C\in V^*\,|\, C(m)\in\Z\textup{ for all }m\in M\}
\subset V^*$. It is a subgroup of $V^*$ as an additive group. 
\end{definition}

The following lemma puts together some trivial or well known
facts.

\begin{lemma}\label{t2.2}
Let $V$ be a finite dimensional $\Q$-vector space.

(a) A lattice $L$ in $V$ does not have torsion. Therefore it has
a $\Z$-basis and a rank $\rank L\in\Z_{\geq 0}$, 
which is the number of elements of a $\Z$-basis of $L$. 
The rank of $L$ is also the dimension
of the $\Q$-subspace $\Q\cdot L\subset V$ which is generated
by $L$. A lattice $L$ is a full lattice if and only if
$\Q\cdot L=V$.

(b) $(V^*)^*=V$. If $L$ is a lattice in $V$ then 
$(L^{*\Z})^{*\Z}=L$. 
Furthermore then $L^{*\Z}$ is a full lattice
in $V^*$ if and only if $L$ is a full lattice in $V$. 

(c) Let $L_1$ and $L_2$ be full lattices in $V$. Then also
$L_1+L_2$ and $L_1\cap L_2$ are full lattices in $V$.
There are $k_1$ and $k_2\in\N$ with 
$$k_1L_1\subset L_2\subset k_2^{-1}L_1.$$

(d) Let $L_1$ be a full lattice in $V$ and $N\in\N$. The sets
$\{L\in\LL(V)\,|\, L\supset L_1,[L:L_1]\leq N\}$ and 
$\{L\in \LL(V)\,|\, L\subset L_1,[L_1:L]\leq N\}$ are finite.
Especially, if $L_2$ is a full lattice with $L_2\supset L_1$,
then the set $\{L\in\LL(V)\,|\, L_1\subset L\subset L_2\}$
is finite. 

(e) Let $L\subset V$ be a $\Z$-module which is not a lattice.
Then $0\in V$ is an accumulation point of $L$.
\end{lemma}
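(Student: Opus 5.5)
The plan is to prove the five parts of Lemma \ref{t2.2} in order, each by reduction to standard facts about finitely generated abelian groups and finite dimensional vector spaces. Throughout, we fix a $\Q$-basis $(v_1,\dots,v_n)$ of $V$ and identify $V\cong\Q^n$ when convenient.

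For (a), a lattice $L$ is a subgroup of the torsion free group $V$, so it is torsion free. Being finitely generated, it is free with a $\Z$-basis $(b_1,\dots,b_r)$. Since $\Q$ is the fraction field of $\Z$, any $\Z$-linear relation obtained from a $\Q$-linear relation among the $b_i$ by clearing denominators must be trivial. Hence the $b_i$ are $\Q$-linearly independent, and $\rank L=\dim\Q L$. The criterion for fullness is then just the definition.

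For (b), $(V^*)^*=V$ is standard. For a full lattice $L$ with $\Z$-basis $(b_i)$, the dual basis $(b_i^*)$ is a $\Z$-basis of $L^{*\Z}$, and the double-dual statement follows. If $L$ is not full, then $L^{*\Z}$ contains the nonzero subspace $(\Q L)^\perp$, so it is not a lattice; by the same argument in the other direction this gives the equivalence. The identity $(L^{*\Z})^{*\Z}=L$ for a non-full lattice needs care. Complete a $\Z$-basis of $L$ to a $\Q$-basis of $V$; then $L^{*\Z}$ is $\Z$ in the first coordinates and $\Q$ in the remaining ones. Its dual is $\Z$ in the first coordinates and $0$ in the others, which is exactly $L$.

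For (c), $L_1+L_2$ is finitely generated and spans $V$, so it is full. For the inclusions $k_1L_1\subset L_2\subset k_2^{-1}L_1$, express each basis vector of $L_1$ as a $\Q$-combination of a basis of $L_2$ and let $k_1$ be a common denominator; symmetrically we get $k_2$. Then $L_1\cap L_2\supset k_1L_1$ spans $V$. It is also a subgroup of the free group $L_1$ of finite rank, hence finitely generated, so it is full.

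For (d), if $L\supset L_1$ with $[L:L_1]\le N$, then $N!\,L\subset L_1$. Hence $L_1\subset L\subset (N!)^{-1}L_1$, and such $L$ correspond to subgroups of the finite group $(N!)^{-1}L_1/L_1$, so there are finitely many. Dually, if $L\subset L_1$ with $[L_1:L]\le N$, then $N!\,L_1\subset L\subset L_1$, which again gives finitely many choices. For the last claim, $L_2/L_1$ is finite, since it is finitely generated and torsion by (c).

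For (e), which I expect to be the main step, suppose $L$ is not a lattice. Since $\Q L$ is finite dimensional, choose $\Q$-linearly independent $b_1,\dots,b_r\in L$ spanning $\Q L$, and set $L'=\sum\Z b_i$. Then $L/L'$ is a torsion group, since every element of $\Q L$ has a multiple in $L'$. If $L/L'$ were finite, $L$ would be finitely generated, so $L/L'$ is an infinite subgroup of the compact torus $\Q L\otimes\mathbb R/L'$. Choosing coset representatives in the fundamental parallelepiped gives infinitely many distinct points of $L$ in a bounded set. By pigeonhole (compactness), two of them, $x\ne y$, lie arbitrarily close together, and $x-y\in L\setminus\{0\}$ is then arbitrarily small. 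Hence $0$ is an accumulation point of $L$, with respect to any norm on $V\otimes\mathbb R$.
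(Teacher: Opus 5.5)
The paper gives no proof of this lemma; it introduces it as a collection of ``trivial or well known facts'', so there is no argument of the authors to compare yours with. Your proof is correct and supplies the standard arguments in each part:
\begin{itemize}
\item in (a), the structure theorem for finitely generated torsion free abelian groups;
\item in (b), dual bases, after completing a $\Z$-basis of $L$ to a $\Q$-basis of $V$;
\item in (c), common denominators;
\item in (d), the sandwiches $L_1\subset L\subset (N!)^{-1}L_1$ and $N!\,L_1\subset L\subset L_1$;
\item in (e), compactness of $(\Q L\otimes_\Q\mathbb{R})/L'$.
\end{itemize}

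Two small remarks. First, in (b) the phrase ``by the same argument in the other direction'' is superfluous. The dual basis argument gives that $L$ full implies $L^{*\Z}$ full, and the annihilator argument gives that $L$ not full implies $L^{*\Z}$ is not a lattice; together these already give both implications. Second, in (e) you rightly use the topology induced from $V\otimes_\Q\mathbb{R}$. This is the topology the paper needs in the proof of Lemma \ref{t2.3}, where the maps $\beta(a_i,.)$ must be continuous and the lattice $L_3$ must be discrete.
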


The next lemma will be useful again and again.

\begin{lemma}\label{t2.3}
Let $V_1,V_2$ and $V_3$ be finite dimensional $\Q$-vector spaces.
Let $L_1\in\LL(V_1)$ and $L_3\in\LL(V_3)$, and let 
$\beta:V_1\times V_2\to V_3$ be a $\Q$-bilinear map such that
for some elements $a_1,...,a_l\in V_1$
\begin{eqnarray}\label{2.1}
\bigcap_{i=1}^l \ker\Bigl(\beta(a_i,.):V_2\to V_3\Bigr)=\{0\}.
\end{eqnarray}

(a) Then the set
\begin{eqnarray*}
L_3:L_1&:=&\{ b\in V_2\,|\, \beta(.,b)(L_1)\subset L_3\}
\end{eqnarray*}
is a full lattice in $V_2$, so $L_3:L_1\in\LL(V_2)$.

(b) A dual bilinear map $\beta^*$ is defined by
\begin{eqnarray*}
\beta^*: V_1\times V_3^*&\to& V_2^*,\\
\beta^*(a,C)(b)&:=& C(\beta(a,b))\quad\textup{for }
a\in V_1,C\in V_3^*,b\in V_2.
\end{eqnarray*}
Then
\begin{eqnarray*}
(L_3:L_1)^{*\Z} &=& \beta^*(L_1,L_3^{*\Z}), \\
\textup{ equivalently: }
L_3:L_1 &=&(\beta^*(L_1,L_3^{*\Z}))^{*\Z}.
\end{eqnarray*}
\end{lemma}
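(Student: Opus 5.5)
The plan is to prove (b) first, using the duality of Lemma \ref{t2.2} (b), and then to deduce (a) from (b).

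\textbf{Part (b).} Put $M:=\beta^*(L_1,L_3^{*\Z})$. We read this as the additive subgroup of $V_2^*$ generated by the elements $\beta^*(a,C)$ with $a\in L_1$ and $C\in L_3^{*\Z}$.

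The first step is the identity $L_3:L_1=M^{*\Z}$. Take $b\in V_2$. Then $b\in M^{*\Z}$ if and only if $C(\beta(a,b))\in\Z$ for all $a\in L_1$ and all $C\in L_3^{*\Z}$. For fixed $a$, the condition that $C(\beta(a,b))\in\Z$ for all $C\in L_3^{*\Z}$ says $\beta(a,b)\in(L_3^{*\Z})^{*\Z}$. By Lemma \ref{t2.2} (b) this is $L_3$. So $b\in M^{*\Z}$ exactly when $\beta(L_1,b)\subset L_3$, which is the definition of $L_3:L_1$.

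The second step is to show that $M$ is a lattice, so that Lemma \ref{t2.2} (b) gives $M^{*\Z *\Z}=M$. Choose $\Z$-bases $(a_j)$ of $L_1$ and $(C_k)$ of $L_3^{*\Z}$; the latter is a full lattice in $V_3^*$ by Lemma \ref{t2.2} (b). Since $\beta^*$ is bilinear, $M$ is generated by the finitely many elements $\beta^*(a_j,C_k)$. It is therefore a finitely generated subgroup of $V_2^*$, hence a lattice.

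Dualizing the first step then gives $(L_3:L_1)^{*\Z}=M^{*\Z *\Z}=M$. This is the first formula in (b), and the second formula is the first step itself.

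\textbf{Part (a).} By Lemma \ref{t2.2} (b), $L_3:L_1=M^{*\Z}$ is a full lattice in $V_2$ if and only if $M$ is a full lattice in $V_2^*$. We already know $M$ is a lattice, so it remains to show $\Q\cdot M=V_2^*$.

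Suppose $\Q\cdot M$ were a proper subspace. Then some $0\neq b\in V_2$ is killed by every element of $M$. In that case $C(\beta(a,b))=0$ for all $a\in L_1$ and $C\in L_3^{*\Z}$. Since $L_3^{*\Z}$ spans $V_3^*$, this forces $\beta(a,b)=0$ for all $a\in L_1$. Since $L_1$ spans $V_1$ and $\beta$ is linear in the first argument, $\beta(a,b)=0$ for all $a\in V_1$, in particular for $a_1,\dots,a_l$. This contradicts (\ref{2.1}).

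The only point needing care is the lattice property of $M$: the definition of a lattice requires finite generation, and this is supplied by bilinearity together with the finite bases chosen in the second step.
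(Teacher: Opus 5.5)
Your proof is correct. Part (b) matches the paper's argument: the same chain of equivalences gives $L_3:L_1=M^{*\Z}$, and $M$ is finitely generated by bilinearity. You reverse the logical order, however, and prove (a) differently.

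The paper proves (a) first and independently, by discreteness. If $L_3:L_1$ were not a lattice, Lemma~\ref{t2.2}~(e) would give nonzero $b_n\in L_3:L_1$ converging to $0$. Then $\beta(a_i,b_n)\in L_3$ converges to $0$, so it vanishes for large $n$, and \eqref{2.1} forces $b_n=0$. Fullness comes from clearing denominators. In (b) the paper then uses this fullness, via Lemma~\ref{t2.2}~(b), to conclude that $M$ is full and equals $(L_3:L_1)^{*\Z}$.

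You instead get $M=(M^{*\Z})^{*\Z}$ directly from double duality for arbitrary lattices. You then obtain (a) as a corollary of (b), with \eqref{2.1} entering exactly as the statement that $M$ spans $V_2^*$.

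What your route buys:
\begin{itemize}
\item It is purely algebraic and dispenses with the accumulation-point fact.
\item It shows that \eqref{2.1} is precisely the needed condition: without it, some line $\Q b$ lies in $L_3:L_1$.
\item It sidesteps a small imprecision in the paper's (a). The paper uses $\beta(a_i,b_n)\in L_3$, although the $a_i$ are only given in $V_1$ and must first be rescaled into $L_1$. You only use that $L_1$ spans $V_1$.
\end{itemize}
The paper's route, in turn, keeps (a) self-contained without passing to dual spaces.
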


{\bf Proof:}
(a) The set $L_3:L_1$ is a $\Z$-module. Suppose that it is not
a lattice in $V$. Then by Lemma \ref{t2.2} (e) there is a 
sequence $(b_n)_{n\in\N}$ of points in $L_3:L_1-\{0\}$ with 
$\lim_{n\to\infty}b_n=0$. 

For each $a_i$ in \eqref{2.1}, the sequence 
$(\beta(a_i,b_n))_{n\in\N}$ consists of points in $L_3$ 
and converges to $0$. So there is an $n_i\in\N$ with 
$\beta(a_i,b_n)=0$ for $n\geq n_i$. Then $b_n=0$ for
$n\geq \max_in_i$ by the assumption \eqref{2.1}, 
a contradiction. Therefore $L_3:L_1$ is a lattice.

It is a full lattice in $V_2$ because for any $b\in V_2$
a number $r\in \N$ with $\beta(.,rb)(L_1)\subset L_3$ exists.

(b) We have $(\beta^*(L_1,L_3^{*\Z}))^{*\Z}=L_3:L_1$ because of the 
following.
\begin{eqnarray*}
&& b\in (\beta^*(L_1,L_3^{*\Z}))^{*\Z}\subset (V^*_2)^*=V_2\\
&\iff& C(\beta(a,b))=\beta^*(a,C)(b)\in\Z\quad
\textup{for all }a\in L_1,C\in L_3^{*\Z}\\
&\iff& \beta(a,b)\in L_3\quad\textup{for all }a\in L_1\\
&\iff& b\in L_3:L_1.
\end{eqnarray*}

$\beta^*(L_1,L_3^{*\Z})$ is a lattice in $V_2^*$ because $L_1$ and
$L_3^{*\Z}$ are lattices in $V_1$ and $V_3^*$. 
As $(\beta^*(L_1,L_3^{*\Z}))^{*\Z}=L_3:L_1$ is a full lattice in $V_2$,
by Lemma \ref{t2.2} (b) $\beta^*(L_1,L_3^{*\Z})$ is a full lattice
in $V_2^*$ and $\beta^*(L_1,L_3^{*\Z})=(L_3:L_1)^{*\Z}$.
\hfill$\Box$ 

\begin{lemma}\label{t2.4}
Let $V$ be a finite dimensional $\Q$-vector space with an 
increasing filtration
\begin{eqnarray*}
\{0\}=V_0\subset V_1\subset...\subset V_{n-1}\subset V_n=V
\end{eqnarray*}
for some $n\in\N$. Denote
$$V_{[j]}:= V_j/V_{j-1}\quad\textup{for }j\in\{1,...,n\}$$
and denote for $L\in \LL(V)$
$$L_{[j]}:= (L\cap V_j+V_{j-1})/V_{j-1}\subset V_{[j]}
\quad\textup{for }j\in\{1,...,n\}.$$

(a) Then $L_{[j]}\in\LL(V_{[j]})$.

(b) If $K$ and $L\in \LL(V)$ satisfy $K\supset L$ then
\begin{eqnarray}\label{2.2}
K_{[j]}\supset L_{[j]}\quad\textup{and}\quad
[K:L]=\prod_{j=1}^n [K_{[j]}:L_{[j]}].
\end{eqnarray}

(c) Fix $L\in\LL(V)$. Consider for any $j\in\{1,...,n\}$
a finite set $\{K_{j,1},...,K_{j,l_j}\}\subset\LL(V_{[j]})$.
Then the set
\begin{eqnarray*}
\{K\in\LL(V)\,|\, K\supset L, 
K_{[j]}\in \{K_{j,1},...,K_{j,l_j}\}\textup{ for }
j\in\{1,...,n\}\}
\end{eqnarray*}
is finite.
\end{lemma}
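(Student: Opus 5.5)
We prove the three parts in order: (a) and (b) by identifying each graded piece $L_{[j]}$ with a lattice in $V_j$ modulo $V_{j-1}$ and applying the third isomorphism theorem, and (c) by induction on $n$ using Lemma \ref{t2.2} (d).

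For (a), observe that $L_{[j]}$ is the image of $L\cap V_j$ under the projection $V_j\to V_{[j]}$. Since $L\cap V_j$ is a subgroup of the lattice $L$, it is finitely generated, so its image is a lattice. It is full because $\Q\cdot(L\cap V_j)=V_j$: for $v\in V_j$ some multiple $rv$ with $r\in\N$ lies in $L$, hence in $L\cap V_j$. So $L_{[j]}$ spans $V_{[j]}$.

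For (b), the inclusion $K_{[j]}\supset L_{[j]}$ is immediate from $K\cap V_j\supset L\cap V_j$. For the index formula, write $K_j:=K\cap V_j$ and $L_j:=L\cap V_j$. Then $L_{[j]}\cong L_j/L_{j-1}$ and $K_{[j]}\cong K_j/K_{j-1}$, using $L_j\cap V_{j-1}=L_{j-1}$. We claim $[K_j:L_j]=[K_{j-1}:L_{j-1}]\cdot[K_{[j]}:L_{[j]}]$; this follows from the third isomorphism theorem applied to the chain $L_{j-1}\subset L_j$, $L_{j-1}\subset K_{j-1}\subset K_j$. Concretely, count $[K_j:L_j]=[K_j:L_j+K_{j-1}]\cdot[L_j+K_{j-1}:L_j]$. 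The first factor is $[K_{[j]}:L_{[j]}]$, since $K_j/(L_j+K_{j-1})\cong K_{[j]}/L_{[j]}$. The second factor is $[K_{j-1}:K_{j-1}\cap L_j]=[K_{j-1}:L_{j-1}]$. All indices are finite because these are full lattices of equal rank in $V_j$ with one containing the other. Induction on $j$, ending at $j=n$ where $K_n=K$ and $L_n=L$, gives \eqref{2.2}.

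For (c), the goal is to bound the index and then apply Lemma \ref{t2.2} (d). By (b), every $K$ in the set satisfies
$$[K:L]=\prod_j[K_{[j]}:L_{[j]}]\leq \prod_j\max_i[K_{j,i}:L_{[j]}].$$
Here only those $K_{j,i}$ that contain $L_{[j]}$ can occur, so the maxima are finite indices; if some $j$ admits no such $K_{j,i}$, the set is empty. Hence $[K:L]\leq N$ for a fixed $N$, and Lemma \ref{t2.2} (d) shows that only finitely many full lattices $K\supset L$ have index at most $N$. The main point requiring care is the index bookkeeping in (b), in particular the identification $K_{j-1}\cap L_j=L_{j-1}$, which uses $L\subset K$.
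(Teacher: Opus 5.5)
Your proof is correct. Parts (a) and (c) match the paper's argument: for (c), you bound $[K:L]$ using the product formula from (b) and then invoke Lemma \ref{t2.2} (d). You also note that only those $K_{j,i}$ containing $L_{[j]}$ can occur, which makes the maxima honest indices; the paper leaves this point implicit.

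For (b) your route differs in technique. The paper treats $n=2$ with bases and leaves general $n$ to induction. It chooses $\Z$-bases of $K_{[j]}$ and $L_{[j]}$, lifts the top-level ones to $K$ and $L$, and reads $[K:L]$ off as $|\det|$ of a block lower triangular change-of-basis matrix with diagonal blocks $M_2, M_1$. You instead work basis-free, inducting on $j$ over $K_j=K\cap V_j$ and $L_j=L\cap V_j$. You split $[K_j:L_j]$ along the chain $L_j\subset L_j+K_{j-1}\subset K_j$: the top factor is identified with $[K_{[j]}:L_{[j]}]$, and the bottom factor becomes $[K_{j-1}:L_{j-1}]$ via the second isomorphism theorem and the identity $K_{j-1}\cap L_j=L_{j-1}$.

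The determinant computation shows the multiplicativity in a single picture. Your argument handles arbitrary $n$ directly and makes explicit exactly where $L\subset K$ is used, at no extra length.
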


{\bf Proof:}
(a) Trivial.

(b) We restrict to the case $n=2$. The general case follows
similarly respectively by induction. For $j\in\{1;2\}$
denote $m_j:=\dim V_{[j]}$. For $j\in\{1;2\}$ choose 
$\Z$-bases $\BB_{K,j}\in M_{1\times m_j}(K_{[j]})$ and 
$\BB_{L,j}\in M_{1\times m_j}(L_{[j]})$ of $K_{[j]}$ 
respectively $L_{[j]}$, 
and denote by $M_j\in M_{m_j\times m_j}(\Z)$ 
the matrix which expresses the elements of $\BB_{L,j}$
as linear combinations of the elements of $\BB_{K,j}$,
namely $\BB_{L,j}=\BB_{K,j}\cdot M_j$. Then
$[K_{[j]}:L_{[j]}]=|\det M_j|$. 

Lift the elements of $\BB_{K,2}$ and of $\BB_{L,2}$
arbitrarily to elements of $K$ respectively $L$
and call the corresponding tuples $\www{\BB}_{K,2}$
and $\www{\BB}_{L,2}$. 
Then $(\www{\BB}_{K,2},\BB_{K,1})$ is a $\Z$-basis of $K$,
and $(\www{\BB}_{L,2},\BB_{L,1})$ is a $\Z$-basis of $L$,
and the matrix which expresses the elements of this 
$\Z$-basis of $L$ as linear combinations of this $\Z$-basis
of $K$ is
$$\begin{pmatrix}M_2 & 0 \\ * & M_1 \end{pmatrix}
\in M_{(m_1+m_2)\times (m_1+m_2)}(\Z).$$
The absolute value of its determinant is
$$[K:L]=|\det M_2|\cdot |\det M_1| =[K_{[2]}:L_{[2]}]
\cdot [K_{[1]}:L_{[1]}].$$

(c) A full lattce $K\supset L$ with
$K_{[j]}\in \{K_{j,1},...,K_{j,l_j}\}$ satisfies because 
of part (b)
$$[K:L]\leq \prod_{j=1}^n \max_i [K_{j,i}:L_{[j]}],$$
so the number $[K:L]$ is bounded from above. 
One concludes with Lemma \ref{t2.2} (d)
that the set of such lattices $K$ is finite.
\hfill$\Box$

\section{Finite dimensional commutative $\Q$-algebras}\label{s3}
\setcounter{equation}{0}
\setcounter{table}{0}
\setcounter{figure}{0}

Throughout the whole paper $A$ is a finite dimensional
commutative $\Q$-algebra with unit element $1_A$.
The structure theory of $A$ is not difficult.
It is partly covered by the Wedderburn-Malcev theorem
\cite[(72.19) Theorem]{CR62}. But we need to look at the
structure more closely. This is done in Theorem \ref{t3.1}
and Lemma \ref{t3.3}.

\begin{theorem}\label{t3.1}
Let $A$ be a finite dimensional commutative $\Q$-algebra
with unit element $1_A$. Then $A$ has a unique vector space 
decomposition
\begin{eqnarray}
A=\bigoplus_{j=1}^k A^{(j)} \quad\textup{for some }
k\in\N \label{3.1}
\end{eqnarray}
with the following properties.
\begin{list}{}{}
\item[(i)]
$A^{(1)},...,A^{(k)}$ are $\Q$-algebras with 
$A^{(i)}\cdot A^{(j)}=\{0\}$ for $i\neq j$.
\item[(ii)]
$1_A=\sum_{j=1}^k 1_{A^{(j)}}$ splits into unit elements $1_{A^{(j)}}$
of the algebras $A^{(j)}$. 
\item[(iii)]
$A^{(j)}$ splits uniquely into
\begin{eqnarray}\label{3.2}
A^{(j)}=F^{(j)}\oplus N^{(j)}
\end{eqnarray}
with $F^{(j)}$ a $\Q$-subalgebra and an algebraic number field,
and $N^{(j)}$ the unique maximal ideal of $A^{(j)}$.
$A^{(j)}$ is an $F^{(j)}$-algebra. 
$N^{(j)}$ consists of nilpotent elements and is the radical
of $A^{(j)}$.
\end{list}
\end{theorem}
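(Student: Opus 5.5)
The plan is to derive the decomposition (3.1) from the idempotents of $A$ and the splitting (3.2) inside each local factor. \emph{Step 1.} Since $A$ is commutative and artinian, it has only finitely many prime ideals, all maximal: $\mathfrak m_1,\dots,\mathfrak m_k$. The Chinese remainder theorem applied to $\bigcap_j\mathfrak m_j^{N}=0$ (for large $N$, as the nilradical $\bigcap_j\mathfrak m_j$ is nilpotent) gives $A\cong\prod_j A/\mathfrak m_j^N$. Concretely this yields orthogonal idempotents $e_1,\dots,e_k$ with $\sum e_j=1_A$, and we set $A^{(j)}:=e_jA$. Then (i) and (ii) hold, with $1_{A^{(j)}}=e_j$, and each $A^{(j)}$ is local with maximal ideal $N^{(j)}:=e_j\mathfrak m_j$.

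\emph{Step 2.} Uniqueness of (3.1). The primitive idempotents of a commutative ring are uniquely determined: if $e,e'$ are primitive, then $ee'$ is an idempotent below both, so $ee'\in\{0,e\}$ and likewise $ee'\in\{0,e'\}$. Any decomposition satisfying (i) and (ii) with local $A^{(j)}$, which is forced once (iii) holds, has $1_{A^{(j)}}$ primitive idempotents summing to $1_A$. Hence these must be exactly the $e_j$.

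\emph{Step 3.} The local factor. Fix $j$ and write $B=A^{(j)}$, $N=N^{(j)}$, $K=B/N$, a finite field extension of $\Q$. $N$ is nilpotent: it is a finitely generated ideal of nilpotents in a commutative ring. It is also exactly the set of nilpotent elements, since $B/N$ is a field, so $N$ is the radical.

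\emph{Existence of $F^{(j)}$.} Write $K=\Q(\theta)$ with minimal polynomial $g$, which is separable since we are in characteristic $0$. Lift $\theta$ to some $x_0\in B$, so $g(x_0)\in N$. Use Newton/Hensel iteration $x_{i+1}=x_i-g(x_i)g'(x_i)^{-1}$. This is well defined because $g'(x_i)$ maps to $g'(\theta)\neq0$ in $K$ and is therefore a unit of the local ring $B$. Each step squares the ideal containing $g(x_i)$, so after finitely many steps $g(x)=0$. Then $\Q[x]\cong\Q[t]/(g)\cong K$ maps isomorphically onto $B/N$. Put $F^{(j)}:=\Q[x]$; this gives $B=F^{(j)}\oplus N$ and makes $B$ an $F^{(j)}$-algebra.

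\emph{Uniqueness of $F^{(j)}$.} Suppose $F'$ is another subfield complementary to $N$. Let $y\in F'$ be the element mapping to $\theta$; then $g(y)=0$ and $y-x\in N$. Expanding
\[
0=g(y)-g(x)=(y-x)\bigl(g'(x)+(y-x)h\bigr)
\]
for some $h\in B$, the second factor is a unit because $g'(x)$ is a unit and $(y-x)h\in N$. Hence $y=x$ and $F'=F^{(j)}$.

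The main obstacle is this uniqueness of the field complement (the Wedderburn–Malcev part in the commutative case). The Hensel argument handles it, and it relies essentially on the separability of $g$, hence on characteristic $0$.
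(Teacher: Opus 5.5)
Your proof is correct, but it takes a genuinely different route from the paper.

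\textbf{The paper's route.} The paper works linear-algebraically with the commuting multiplication operators $\mu_a$. The $A^{(j)}$ are the simultaneous hauptspaces of all $\mu_a$, so their uniqueness is automatic. The splitting $A^{(j)}=F^{(j)}\oplus N^{(j)}$ comes from the Jordan--Chevalley decomposition $a=a_s+a_n$ with $a_s,a_n\in\Q[a]$: $F^{(j)}$ consists of the semisimple elements and $N^{(j)}$ of the nilpotent ones. Uniqueness of the field complement is inherited from uniqueness of Jordan--Chevalley, since every element of a subfield acts semisimply.

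\textbf{Your route.} You use the structure theory of commutative Artinian rings (Chinese remainder theorem, orthogonal idempotents). You get uniqueness of the $A^{(j)}$ from the rigidity of primitive idempotents. You then construct and pin down $F^{(j)}$ by Newton--Hensel lifting of a primitive element of $B/N$.

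\textbf{How they relate.} The two are closer than they look. Your lifted $x$ is precisely the semisimple part $(x_0)_s$ of any lift $x_0$ of $\theta$, and Newton iteration is a standard proof of Jordan--Chevalley. So you are in effect reproving the part of that theorem which is needed. The paper's route is shorter given the cited theorem. Yours is more self-contained and makes the uniqueness statements explicit, whereas the paper leaves them largely implicit, as well as the fact that the semisimple elements of $A^{(j)}$ form a subfield.

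\textbf{Details to tighten.}
\begin{itemize}
\item That every element of $N$ is nilpotent does not follow from $B/N$ being a field; that only gives the reverse inclusion. It follows because $N$ is the only prime ideal of the Artinian local ring $B$, or directly since $N=e_j\mathfrak m_j\subset\bigcap_i\mathfrak m_i$.
\item In Step 2 you should note that (i) and (ii) force $A^{(j)}=1_{A^{(j)}}A$, so the decomposition is determined by its idempotents. You should also note that $1_{A^{(j)}}$ is primitive in $A$, because an idempotent $f$ with $f1_{A^{(j)}}=f$ lies in the local ring $A^{(j)}$.
\item Your closing remark that the argument relies on characteristic $0$ is too narrow. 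It only needs a primitive element of $B/N$ with separable minimal polynomial, which exists over every perfect field. This matters: by Remark \ref{t3.2} (i) the paper applies Theorem \ref{t3.1} over $\F_p$ to $\Lambda/p\Lambda$ in the proofs of Theorems \ref{t7.6} and \ref{t10.3}, and your argument covers that case verbatim.
\end{itemize}
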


{\bf Proof:}
For each $a\in A$ let $\mu_a:A\to A,\ b\mapsto ab$, denote the
multiplication by $a$. The $\Q$-vector space $A$ decomposes
into hauptspaces with respect to the $\Q$-linear endomorphism
$\mu_a$. As all $\mu_a$ commute, $A$ has the unique simultaneous
hauptspace decomposition $A=\bigoplus_{j=1}^k A^{(j)}$
with respect to all $\mu_a$. In fact, it is sufficient to
consider a $\Q$-basis $a_1,...,a_{\dim A}$ of $A$ and the 
intersections of the hauptspaces of 
$\mu_{a_1},...,\mu_{a_{\dim A}}$.

Each $A^{(j)}$ is invariant under $\mu_a$ for each $a\in A$, so 
$A^{(j)}$ is a $\Q$-algebra with 
$$A^{(i)}\cdot A^{(j)}\subset A^{(i)}\cap A^{(j)}=\{0\}
\quad\textup{for }i\neq j.$$
Decompose $1_A$ into its summands in $A^{(1)},...,A^{(k)}$,
$1_A=\sum_{j=1}^k 1_{A^{(j)}}$. 
For $a\in A^{(j)}$ we have $a=1_A\cdot a =1_{A^{(j)}}\cdot a$, so 
$1_{A^{(j)}}$ is a unit element in $A^{(j)}$.

For each $a\in A$ the Jordan-Chevalley theorem gives a unique
decomposition $\mu_a=(\mu_a)_s+(\mu_a)_n$ with commuting
endomorphisms $(\mu_a)_s$ and $(\mu_a)_n$ in $\Q[\mu_a]$,
where $(\mu_a)_s$ is semisimple and $(\mu_a)_n$ is nilpotent
\cite[VII \S 5 9. Theorem 1]{Bo90}.
Then $a=a_s+a_n$ with $a_s,a_n\in\Q[a]\subset A$,
$\mu_{a_s}=(\mu_a)_s$, $\mu_{a_n}=(\mu_a)_n$. 

The decomposition $a=a_s+a_n$ for $a\in A^{(j)}$ 
gives a decomposition $A^{(j)}=F^{(j)}\oplus N^{(j)}$ 
with $a_s$ semisimple and $a_n$ nilpotent.
$F^{(j)}$ contains elements $a$ such that the characteristic
polynomial of the restriction $\mu_a|_{F^{(j)}}$ is irreducible.
Else $F^{(j)}$ and $A^{(j)}$ would decompose further.
Therefore 
$$F^{(j)}=\Q[a]\cong \Q[t]/(
\textup{characteristic polynomial of }\mu_a|_{F_j})$$ 
is an algebraic number field.
\hfill$\Box$

\begin{remarks}\label{t3.2}
(i) In fact, the Jordan-Chevalley decomposition holds over 
any perfect field \cite[VII \S 5 9. Theorem 1]{Bo90}.
Therefore we can replace in Theorem \ref{t3.1} 
the field $\Q$ by any perfect field $k$, 
especially by a finite field.
We will use this in the proofs of Theorem \ref{t7.6}
and Theorem \ref{t10.3}.

Though then the subspaces $F^{(j)}$ are in general not 
algebraic number fields. Then they are finite extension fields 
of $k$.

(ii) The sum 
\begin{eqnarray}\label{3.3}
F:=\bigoplus_{j=1}^k F^{(k)}
\end{eqnarray}
is a separable subalgebra of $A$. The maximal ideals
in $A$ are the subspaces
$N^{(j)}\oplus\bigoplus_{i\neq j}A^{(i)}$ for $j\in\{1,...,k\}$.
Their intersection is the radical
\begin{eqnarray}\label{3.4}
R=\bigoplus_{j=1}^k N^{(j)}.
\end{eqnarray}
It is the set of all nilpotent elements of $A$. 
The algebra $A$ decomposes naturally 
into the direct sum $A=F\oplus R$. The induced projection
\begin{eqnarray}\label{3.5}
\pr_F:A\to F
\end{eqnarray}
respects addition, multiplication and division. 
We will come back to it in section \ref{s9}.

(iii) We have 
\begin{eqnarray*}
F^{unit}&=&\prod_{j=1}^k(F^{(j)})^{unit}
=\prod_{j=1}^k(F^{(j)}-\{0\}),\\
A^{unit}&=& F^{unit}\times R,\\
\pr_F(A^{unit})&=&F^{unit}\quad\textup{and}\quad 
\pr_F^{-1}(F^{unit})=A^{unit}.
\end{eqnarray*}
\end{remarks}

The powers of $R$ and the annihilators of its powers 
give rise to two filtrations on $A$ which are related 
in a good way, see Lemma \ref{t3.3}.

\begin{lemma}\label{t3.3}
Consider the situation in Theorem \ref{t3.1}.

(a) For $j\in\{1,...,k\}$ denote $n_j:=\max(n\in\Z_{\geq 0}\,|\, 
(N^{(j)})^n\neq\{0\})$, and denote $n_{max}:=\max_j n_j$.
Consider the decreasing filtration of $A$ by powers of $R$,
\begin{eqnarray*}
A=R^0\supset R^1\supset ... \supset R^{n_{max}}\supset
R^{n_{max}+1}=\{0\}
\end{eqnarray*}
(here $j$ in $R^j$ serves simultaneously as upper index and as
exponent). It is called {\sf radical filtration}.
It satisfies 
\begin{eqnarray}\label{3.6}
R^{l_1}\cdot R^{l_2}=R^{l_1+l_2}\quad\textup{for }
l_1,l_2\in\Z_{\geq 0}.
\end{eqnarray}
Denote the quotients of this filtration by
$$R^{[l]}:= R^l/R^{l+1}\quad\textup{for }
l\in\{0,1,...,n_{max}\}.$$
The multiplication on $A$ induces multiplications on pairs of
quotients with
\begin{eqnarray}\label{3.7}
R^{[l_1]}\cdot R^{[l_2]}=R^{[l_1+l_2]}\quad\textup{for }
l_1,l_2\in\Z_{\geq 0}.
\end{eqnarray}
All the structure in part (a) is compatible with the
decomposition $A=\bigoplus_{j=1}^kA^{(j)}$, especially
\begin{eqnarray*}
R^l&=& \bigoplus_{j=1}^k R^l\cap A^{(j)}\quad\textup{with}\quad
R^l\cap A^{(j)}= (N^{(j)})^l,\\
R^{[l]}&\cong&\bigoplus_{j=1}^k (N^{(j)})^l/(N^{(j)})^{l+1}
\quad\textup{canonically.}
\end{eqnarray*}

(b) Consider the increasing filtration of $A$ by the 
annihilators $\Ann(R^l)$,
\begin{eqnarray*}
S_l:= \Ann(R^l)\quad\textup{for }l\in\{0,1,...,n_{max}+1\},\\
\{0\}=S_0\subset S_1\subset ...\subset S_{n_{max}}
\subset S_{n_{max}+1}=A.
\end{eqnarray*}
It is called {\sf socle filtration} because $S_1=\Ann(R)$
is the socle of $A$. It satisfies
\begin{eqnarray}
R^{l_1}\cdot S_{l_2}&\subset& S_{l_2-l_1}\quad\textup{for }
l_1<l_2,\label{3.8}\\
R^{l_1}\cdot S_{l_2}&=& \{0\}\quad\textup{for }
l_1\geq l_2.\nonumber
\end{eqnarray}
Denote the quotients of the socle filtration by
$$S_{[l]}:= S_l/S_{l-1}\quad\textup{for }
l\in\{1,...,n_{max}+1\}.$$
The multiplication on $A$ induces multiplications on pairs
of quotients with
\begin{eqnarray}\label{3.9}
R^{[l_1]}\cdot S_{[l_2]}&\subset& S_{[l_2-l_1]}\quad\textup{for }
l_1<l_2,\\
R^{[l_1]}\cdot S_{[l_2]}&=& \{0\}\quad\textup{for }
l_1\geq l_2.\nonumber
\end{eqnarray}
Consider $l_1,l_2\in\Z_{\geq 0}$ with $l_1<l_2$, and consider
a $\Q$-basis $a_1,...,a_{\dim R^{[l_1]}}$ of $R^{[l_1]}$. 
The homomorphisms
$$\mu^{[l_2]}_{a_i}:S_{[l_2]}\to S_{[l_2-l_1]}, b\mapsto
a_i\cdot b$$
satisfy 
\begin{eqnarray}\label{3.10}
\Bigl(\bigcap_{i=1}^{\dim R^{[l_1]}}\ker \mu^{[l_2]}_{a_i}
\Bigr)=\{0\}\subset S_{[l_2]}.
\end{eqnarray}
All the structure in part (b) is compatible with the
decomposition $A=\bigoplus_{j=1}^kA^{(j)}$.
\end{lemma}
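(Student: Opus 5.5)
The plan is to verify the statements directly from the definitions, using only that $R$ is an ideal and the decomposition of Theorem \ref{t3.1}. The one genuinely non-formal point is \eqref{3.10}, and even that reduces to \eqref{3.6} together with the definition of the socle filtration.

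For part (a), $R^l$ denotes the $\Q$-span of all products of $l$ elements of $R$, with $R^0=A$. Then $R^{l_1}\cdot R^{l_2}=R^{l_1+l_2}$ holds by definition: a product of $l_1+l_2$ factors splits as a product of $l_1$ factors times a product of $l_2$ factors, and conversely. In particular $R^{l_1}R^{l_2+1}\subset R^{l_1+l_2+1}$, so the multiplication descends to a well-defined bilinear map $R^{[l_1]}\times R^{[l_2]}\to R^{[l_1+l_2]}$, and \eqref{3.7} follows from \eqref{3.6} by passing to quotients. That $R^{n_{max}+1}=0$ and the compatibility statements both come from Theorem \ref{t3.1} and Remark \ref{t3.2} (ii). Since $R=\bigoplus_j N^{(j)}$ and $N^{(i)}N^{(j)}=0$ for $i\neq j$, multiplying out gives $R^l=\bigoplus_j (N^{(j)})^l$, and $(N^{(j)})^l\subset A^{(j)}$. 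Hence $R^l\cap A^{(j)}=(N^{(j)})^l$, and the quotients split accordingly.

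For part (b), take $a\in R^{l_1}$ and $b\in S_{l_2}=\Ann(R^{l_2})$ with $l_1<l_2$. Then \eqref{3.6} gives
$$ab\cdot R^{l_2-l_1}=b\cdot R^{l_1}R^{l_2-l_1}\subset bR^{l_2}=0,$$
so $ab\in S_{l_2-l_1}$. If instead $l_1\geq l_2$, then $R^{l_1}\subset R^{l_2}$, so $R^{l_1}S_{l_2}=0$. This proves \eqref{3.8}. To see that \eqref{3.9} is well defined, check the two sources of ambiguity:
\begin{itemize}
\item $R^{l_1+1}S_{l_2}\subset S_{l_2-l_1-1}$, by \eqref{3.8} or by its second line;
\item $R^{l_1}S_{l_2-1}\subset S_{l_2-1-l_1}$, by \eqref{3.8} again.
\end{itemize}
So the product induces a bilinear map $R^{[l_1]}\times S_{[l_2]}\to S_{[l_2-l_1]}$, and it vanishes when $l_1\geq l_2$.

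The main step is \eqref{3.10}. Let $b\in S_{l_2}$ represent a class in the intersection of the kernels, so $a_ib\in S_{l_2-l_1-1}$ for all $i$. The $a_i$ span $R^{l_1}$ modulo $R^{l_1+1}$, and $R^{l_1+1}b\subset S_{l_2-l_1-1}$ holds anyway, so $R^{l_1}b\subset S_{l_2-l_1-1}=\Ann(R^{l_2-l_1-1})$. Applying \eqref{3.6} once more,
$$R^{l_2-1}b=R^{l_2-l_1-1}R^{l_1}b=0.$$
Thus $b\in\Ann(R^{l_2-1})=S_{l_2-1}$, so the class of $b$ in $S_{[l_2]}$ is zero. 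This also covers the edge case $l_2=l_1+1$, where $S_0=\{0\}$ and $R^0=A$.

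Finally, compatibility of the socle filtration with the decomposition follows from the orthogonal idempotents $1_{A^{(j)}}$. For $b=\sum_j b_j$ with $b_j\in A^{(j)}$, we have $bR^l=0$ if and only if $b_j(N^{(j)})^l=0$ for all $j$. Hence $S_l=\bigoplus_j \Ann_{A^{(j)}}((N^{(j)})^l)$.
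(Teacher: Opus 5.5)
Your proposal is correct and follows the paper's proof essentially step for step. Like the paper, you get \eqref{3.6}--\eqref{3.9} directly from the definitions, and you prove \eqref{3.10} by showing $R^{l_1}b\subset S_{l_2-l_1-1}$ and then $R^{l_2-1}b=R^{l_2-l_1-1}R^{l_1}b=0$. You also make explicit a detail the paper passes over: the inclusion $R^{l_1+1}b\subset S_{l_2-l_1-1}$ is what lets you pass from the lifts $\tilde a_i$ to all of $R^{l_1}$.
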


{\bf Proof:}
(a) Equality in \eqref{3.6} follows from the definition of
the power $R^l$ of $R$. It implies \eqref{3.7}.
Everything else is straightforward.

(b) \eqref{3.8} follows from the definition of $S_l$.
It implies \eqref{3.9}. The main point is to prove \eqref{3.10}.

Suppose $b\in S_{[l_2]}$ with $a_i\cdot b=0\in S_{[l_2-l_1]}$
for each $i$. Choose $\www{b}\in S_{l_2}$ with 
$[\www{b}]_{[l_2]}=b$, and choose $\www{a_i}\in R^{l_1}$ with 
$[\www{a_i}]^{[l_1]}=a_i$. 
Then $\www{a_i}\cdot \www{b}\in S_{l_2-l_1-1}$ for each $i$.
Therefore $R^{l_1}\cdot \www{b}\subset S_{l_2-l_1-1}$, so 
$$R^{l_2-1}\cdot \www{b} = R^{l_2-l_1-1}\cdot R^{l_1}\cdot 
\www{b}\subset R^{l_2-l_1-1}\cdot S_{l_2-l_1-1}=\{0\}.$$
This implies $\www{b}\in S_{l_2-1}$, so $b=0$.
This proves \eqref{3.10}. \hfill$\Box$

\begin{example}\label{t3.4}
Consider $A=A^{(1)}\oplus A^{(2)}\oplus A^{(3)}$ with
\begin{eqnarray*}
A^{(1)}&\cong& \Q[x,y]/(xy,x^4,y^3),\\
A^{(2)}&\cong& \Q[z]/(z^2),\\
A^{(3)}&\cong& \Q.
\end{eqnarray*}
Let $\oooo{x},\oooo{x^2},\oooo{x^3},\oooo{y},\oooo{y^2},
\oooo{z}$ be the classes of the corresponding monomials
in $A^{(1)}$ respectively $A^{(2)}$. The figures \ref{fig3.1}
and \ref{fig3.2} show the radical filtration and the socle 
filtration. Here $n_1=3$, $n_2=1$, $n_3=0$, $n_{max}=3$. 
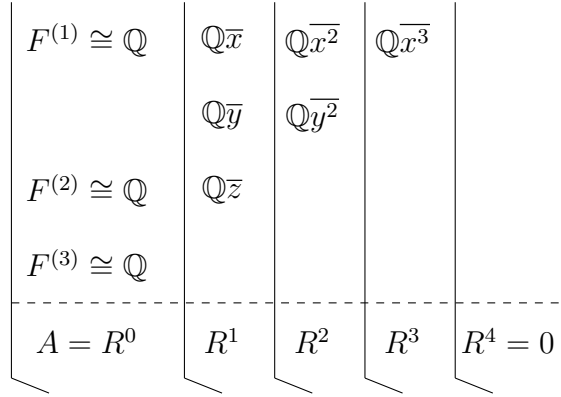
\begin{figure}
\begin{tikzpicture}
\draw [dashed] (-1,0.5)--(6.3,0.5);
\draw (-1,4.5)--(-1,-0.5);
\draw (-1,-0.5)--(-0.5,-0.7);
\node at (0,0) {$A=R^0$};
\node at (0,1) {$F^{(3)}\cong \Q$};
\node at (0,2) {$F^{(2)}\cong \Q$};
\node at (0,4) {$F^{(1)}\cong \Q$};
\draw (1.3,4.5)--(1.3,-0.5);
\draw (1.3,-0.5)--(1.8,-0.7);
\node at (1.8,0) {$R^1$};
\node at (1.8,2) {$\Q\oooo{z}$};
\node at (1.8,3) {$\Q\oooo{y}$};
\node at (1.8,4) {$\Q\oooo{x}$};
\draw (2.5,4.5)--(2.5,-0.5);
\draw (2.5,-0.5)--(3,-0.7);
\node at (3,0) {$R^2$};
\node at (3,3) {$\Q\oooo{y^2}$};
\node at (3,4) {$\Q\oooo{x^2}$};
\draw (3.7,4.5)--(3.7,-0.5);
\draw (3.7,-0.5)--(4.2,-0.7);
\node at (4.2,0) {$R^3$};
\node at (4.2,4) {$\Q\oooo{x^3}$};
\draw (4.9,4.5)--(4.9,-0.5);
\draw (4.9,-0.5)--(5.4,-0.7);
\node at (5.6,0) {$R^4=0$};
\end{tikzpicture}
\caption[Figure 3.1]{Radical filtration in Example \ref{t3.4}}
\label{fig3.1}
\end{figure}

\begin{figure}
\begin{tikzpicture}
\node at (-1.5,4) {$S_4$};
\node at (-1.5,3) {$S_3$};
\node at (-1.5,2) {$S_2$};
\node at (-1.5,1) {$S_1$};
\node at (-1.5,0) {$S_0$};
\draw (-1.8,3.5)--(-2,3);
\draw (-1.8,2.5)--(-2,2);
\draw (-1.8,1.5)--(-2,1);
\draw (-1.8,0.5)--(-2,0);
\draw (1.3,4.5)--(1.8,4.5);
\draw (2.5,4.5)--(3,4.5);
\draw (3.7,4.5)--(4.2,4.5);
\draw (4.9,4.5)--(5.4,4.5);
\draw (-1.8,0.5)--(1,0.5);
\draw (-1.8,1.5)--(1,1.5);
\draw (-1.8,2.5)--(1,2.5);
\draw (-1.8,3.5)--(1,3.5);
\draw (1,0.5)--(1.3,1.5);
\draw (1,1.5)--(1.3,2.5);
\draw (1,2.5)--(1.3,3.5);
\draw (1,3.5)--(1.3,4.5);
\draw (1.3,1.5)--(2.2,1.5);
\draw (1.3,2.5)--(2.2,2.5);
\draw (1.3,3.5)--(2.2,3.5);
\draw (2.2,1.5)--(2.5,2.5);
\draw (2.2,2.5)--(2.5,3.5);
\draw (2.2,3.5)--(2.5,4.5);
\draw (2.5,2.5)--(3.4,2.5);
\draw (2.5,3.5)--(3.4,3.5);
\draw (3.4,2.5)--(3.7,3.5);
\draw (3.4,3.5)--(3.7,4.5);
\draw (3.7,3.5)--(4.6,3.5);
\draw (4.6,3.5)--(4.9,4.5);
\draw [dashed] (-1,4.5)--(-1,-0.2);
\node at (0,1) {$F^{(3)}\cong \Q$};
\node at (0,2) {$F^{(2)}\cong \Q$};
\node at (0,4) {$F^{(1)}\cong \Q$};
\node at (1.8,2) {$\Q\oooo{z}$};
\node at (1.8,3) {$\Q\oooo{y}$};
\node at (1.8,4) {$\Q\oooo{x}$};
\node at (3,3) {$\Q\oooo{y^2}$};
\node at (3,4) {$\Q\oooo{x^2}$};
\node at (4.2,4) {$\Q\oooo{x^3}$};
\end{tikzpicture}
\caption[Figure 3.2]{Socle filtration in Example \ref{t3.4}}
\label{fig3.2}
\end{figure}
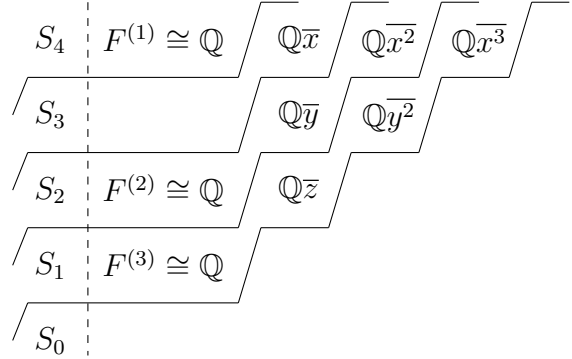
\end{example}

\section{Commutative semigroups}\label{s4}
\setcounter{equation}{0}
\setcounter{table}{0}

This section is a review of basic properties of
commutative semigroups. We follow essentially
\cite[ch. 1.2]{DTZ62}, but we condense the notions and results 
into two definitions and two theorems.  
They will be applied in section \ref{s5} and all later sections.

\begin{definition}\label{t4.1}
(a) A commutative semigroup is a set $S$ with a 
multiplication map $\cdot:S\times S\to S$ which is commutative
($ab=ba$) and associative ($a(bc)=(ab)c$). 
The semigroup is called $S$, so the multiplication map 
is suppressed.

(b) Let $S$ be a commutative semigroup. An element $a\in S$
is called {\it invertible} if an element $b\in S$ and an element
$c\in S$ with $ab=c$ and $ac=a$ exist.

(c) Let $S$ be a commutative semigroup. An element $c\in S$ is
called {\it idempotent} if $cc=c$. 
\end{definition}

\begin{theorem}\label{t4.2}
\cite[1.2.3, 1.2.10]{DTZ62}

Let $S$ be a commutative semigroup.

(a) Let $a\in S$ be invertible. Then there is a unique
element $c\in S$ with the properties $(ac=a,\exists\ b\in S
\textup{ with }ab=c)$. It is called $e_a$. It is idempotent.
There is a unique element $b\in S$ with the properties
$(ab=e_a,be_a=b)$. It is called $a^{-1}$. It is invertible, and 
$e_{a^{-1}}=e_a$. 

(b) An idempotent $c\in S$ is invertible with 
$e_c=c$ and $c^{-1}=c$.

(c) Let $c\in S$ be idempotent. The set 
\begin{eqnarray}\label{4.1}
G(c):=\{a\in S\,|\,a\textup{ invertible with }e_a=c\}
\end{eqnarray}
is a group. It is a maximal subgroup of $S$. 
Any maximal subgroup of $S$ is equal to $G(\www c)$
for some idempotent $\www c\in S$. 

(d) If $a,b\in S$ are invertible, then $ab$ is invertible
with $e_{ab}=e_ae_b$. 

(e) Suppose that $e_1,e_2\in S$ are both idempotent.
Then $e_1e_2$ is idempotent. The sets $G(e_1)e_2$, 
$G(e_2)e_1$ and $G(e_1)G(e_2)$ are subgroups of $G(e_1e_2)$.
The maps 
\begin{eqnarray}\label{4.2}
G(e_1)\to G(e_1)e_2,\ a\mapsto ae_2,\quad\textup{and}\quad
G(e_2)\to G(e_2)e_1,\ b\mapsto be_1,
\end{eqnarray}
are surjective group homomorphisms.

(f) The union $\bigcup_{c\textup{ idempotent}}G(c)\subset S$
of invertible elements in $S$ is a disjoint union. 
It is a subsemigroup of $S$. 
It is equal to $S$ if and only if any element of $S$ is invertible. 
\end{theorem}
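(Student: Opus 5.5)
The proof works directly from Definition \ref{t4.1}, using only commutativity and associativity. Parts (a)--(c) are the core; parts (d)--(f) then follow by short computations.

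For (a), let $a$ be invertible, with $ab=c$ and $ac=a$. First, $c$ is idempotent: $cc=c(ab)=(ca)b=ab=c$.

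Next, uniqueness of $c$. Suppose $c'$ also satisfies $ac'=a$ and $ab'=c'$ for some $b'$. Then
\[
c'=ab'=(ac)b'=c(ab')=cc',
\]
and symmetrically $c=c'c$. Hence $c=c'$, and we may write $e_a:=c$.

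Now existence of $a^{-1}$. Set $b_0:=be_a$. Then $ab_0=(ab)e_a=e_ae_a=e_a$ and $b_0e_a=b_0$.

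For uniqueness, suppose $b_1$ also satisfies $ab_1=e_a$ and $b_1e_a=b_1$. Then
\[
b_1=b_1e_a=b_1(ab_0)=(ab_1)b_0=e_ab_0=b_0.
\]

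Finally, $a^{-1}$ is invertible with $e_{a^{-1}}=e_a$. Indeed $a^{-1}e_a=a^{-1}$ and $a^{-1}a=e_a$, so $e_a$ is a witness for $a^{-1}$. By the uniqueness just proved, $e_{a^{-1}}=e_a$.

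Part (b) is immediate: take $b=c$. Then $cc=c$ and $cc=c$, so $c$ is invertible with $e_c=c$ and $c^{-1}=c$.

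For (c), I will first prove (d). Let $a,a'$ be invertible. Then
\[
(ab)(a^{-1}a'^{-1})=e_ae_{a'} \quad\text{and}\quad (aa')(e_ae_{a'})=aa'.
\]
Since $e_ae_{a'}$ is idempotent, $aa'$ is invertible with $e_{aa'}=e_ae_{a'}$.

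Now fix an idempotent $c$. By (d), $G(c)$ is closed under multiplication, because $cc=c$. The element $c$ is a neutral element of $G(c)$, since $ac=a$ for $a\in G(c)$. Inverses stay inside $G(c)$ by (a). So $G(c)$ is a group.

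Maximality takes a little more care. Let $H\subset S$ be any subgroup, with neutral element $u$. Then $u$ is idempotent. For $h\in H$ with group inverse $h'$, we have $hu=h$ and $hh'=u$, so $h$ is invertible with $e_h=u$. Thus $H\subset G(u)$. This shows at once that $G(c)$ is maximal and that every maximal subgroup is of the form $G(u)$.

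For (e), note first that $(e_1e_2)^2=e_1e_2$. For $a\in G(e_1)$, apply (d) and (b) to get $ae_2\in G(e_1e_2)$. The map $a\mapsto ae_2$ is multiplicative, because
\[
(ae_2)(a'e_2)=aa'e_2.
\]
So its image $G(e_1)e_2$ is a subgroup of $G(e_1e_2)$: it is the homomorphic image of a group, it lies inside the group $G(e_1e_2)$, and it is closed under multiplication and inverses. The same argument handles $G(e_2)e_1$. The set $G(e_1)G(e_2)$ lies in $G(e_1e_2)$ by (d), and it is closed under products and inverses since $(ab)^{-1}=a^{-1}b^{-1}$. The last identity is checked via uniqueness in (a).

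For (f), disjointness of the union follows from the uniqueness of $e_a$. It is a subsemigroup by (d). The final equivalence is then tautological.

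The only delicate points are the uniqueness arguments in (a). Everything else reduces to them.
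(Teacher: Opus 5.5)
Your proof is correct and follows essentially the same route as the paper: the same uniqueness computations for $e_a$ and $a^{-1}$, the witness $be_a$ for the existence of $a^{-1}$, the identity $(aa')(a^{-1}a'^{-1})=e_ae_{a'}$ driving (d) and the closure part of (c), and the observation that any subgroup lies in $G(u)$ for its neutral element $u$. There are two small slips. In (d) you wrote $(ab)$ where you meant $(aa')$. For the maximality of $G(c)$ you should add that if $G(c)\subset H$, then the idempotent $c\in H$ must equal the neutral element $u$ of $H$, because a group has only one idempotent.
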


{\bf Proof:}
(a) Suppose $b_1,c_1,b_2,c_2\in S$ with 
$ac_1=a,ab_1=c_1,ac_2=a,ab_2=c_2$. Then
\begin{eqnarray*}
c_1=ab_1=c_2ab_1=c_2c_1=b_2ac_1=b_2a=c_2,\\
c_1c_1=c_1ab_1=ab_1=c_1.
\end{eqnarray*}
This gives the uniqueness of $e_a$ and that it is idempotent.
Now suppose $b_3,b_4\in S$ with $ab_3=e_a, b_3e_a=b_3,
ab_4=e_a, b_4e_a=b_4$.
Then 
\begin{eqnarray*}
b_3=b_3e_a=b_3ab_4=e_ab_4=b_4.
\end{eqnarray*}
This gives the uniqueness of $a^{-1}$. Its existence follows
from 
\begin{eqnarray*}
a(b_1c_1)=(ab_1)c_1=c_1c_1=c_1\textup{ and }
(b_1c_1)c_1=b_1(c_1c_1)=b_1c_1.
\end{eqnarray*}
Its invertibility and 
$e_{a^{-1}}=e_a$ follow from $aa^{-1}=e_a,a^{-1}e_a=a^{-1}$.

(b) This follows from $cc=c$. 

(c) $c\in G(c)$ because of (b).
Any $a\in G(c)$ has in $G(c)$ the inverse $a^{-1}$.
For $a,b\in G(c)$ $(ab)a^{-1}b^{-1}=c$ and $(ab)c=ab$,
so also $ab\in G(c)$ and $(ab)^{-1}=a^{-1}b^{-1}$.
Therefore $G(c)$ is a group. If $G\subset S$ is a group,
then its unit element $e_G$ is idempotent and any element $a\in G$
is invertible with $e_a=e_G$, so $G\subset G(e_G)$.

(d) $(ab)a^{-1}b^{-1}=e_ae_b$, $(e_ae_b)(ab)=ab$. 

(e) This follows easily from the parts (b)--(d).

(f) If $a\in G(c_1)\cap G(c_2)$ with idempotents $c_1,c_2$,
then $c_1=e_a=c_2$. The rest follows from the parts (d) and (e). 
\hfill$\Box$

\begin{definition}\label{t4.3}
(a) Let $S$ be a commutative semigroup. 
A unit in it is an element $e$ with $ea=a$ for all $a\in S$. 
Obviously, it is unique if it exists.
A commutative semigroup with a unit is a {\it monoid}.

(b) Let $S$ be a commutative semigroup. 
Two elements $a_1$ and $a_2\in S$ are {\it $w$-equivalent} 
(notation: $a_1\sim_w a_2$),
if the following holds:
\begin{eqnarray}\label{4.3} a_1=a_2 \quad\textup{ or }\quad
\exists\ x_1,x_2\in S\textup{ with }a_1x_1=a_2,a_2x_2=a_1.
\end{eqnarray}
\end{definition}

The $w$-equivalence on a semigroup $S$ gives rise to a 
natural quotient semigroup $S/\sim_w=:W(S)$, as Theorem
\ref{t4.4} shows.

\begin{theorem}\label{t4.4}
\cite[1.2.12, 1.2.16, 1.2.17]{DTZ62}

Let $S$ be a commutative semigroup.

(a) $\sim_w$ is an equivalence relation and is compatible
with the multiplication in $S$, i.e.
\begin{eqnarray}\label{4.4}
a_1\sim_w a_2,b_1\sim_w b_2\Rightarrow a_1b_1\sim_w a_2b_2.
\end{eqnarray}
The equivalence class of $a$ is called $[a]_w$.
Therefore the quotient $S/\sim_w$ is a commutative semigroup. 
This semigroup is called $W(S)$. 

(b) If $a$ is invertible then $[a]_w=G(e_a)$. 

(c) Let $a\in S$. The following three properties are equivalent:
\begin{list}{}{}
\item[(i)] $a$ is invertible.
\item[(ii)] $[a]_w$ is invertible (in $W(S)$).
\item[(iii)] $[a]_w$ is idempotent (in $W(S)$).
\end{list}
The only subgroups of $W(S)$ are the sets $\{[c]_w\}$ with
$c\in S$ idempotent. 

(d) The subsemigroup $\bigcup_{c\textup{ idempotent}}G(c)$
(of invertible elements) of $S$ was considered in 
Theorem \ref{t4.2} (e). 
The subsemigroup $W(\bigcup_{c\textup{ idempotent}}G(c))$
of $W(S)$ is isomorphic to the subsemigroup
$\{c\in S\,|\, c\textup{ is idempotent}\}$ of $S$. 
\end{theorem}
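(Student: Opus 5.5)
We prove the four parts in order. Each part is a short manipulation with the definitions of $\sim_w$, invertibility and $e_a$, together with Theorem \ref{t4.2}.

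\emph{Part (a).} Reflexivity and symmetry are immediate from \eqref{4.3}. For transitivity, suppose $a_1x_1=a_2$, $a_2x_2=a_1$ and $a_2y_1=a_3$, $a_3y_2=a_2$. Then $a_1(x_1y_1)=a_3$ and $a_3(y_2x_2)=a_1$. If one of the relations is an equality instead, the conclusion is trivial. For compatibility \eqref{4.4}, it suffices to treat $b_1=b_2=b$, since we can then change one factor at a time and use transitivity. If $a_1x_1=a_2$ and $a_2x_2=a_1$, then $(a_1b)x_1=a_2b$ and $(a_2b)x_2=a_1b$. Hence multiplication descends to a well-defined, commutative and associative product on $W(S)$.

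\emph{Part (b).} Let $a$ be invertible and $b\sim_w a$, say $ax_1=b$ and $bx_2=a$.
\begin{itemize}
\item[(1)] $b$ is invertible with $e_b=e_a$. Indeed $be_a=ax_1e_a=ax_1=b$, and $b(x_2a^{-1})=aa^{-1}=e_a$. By the uniqueness in Theorem \ref{t4.2}(a), $e_b=e_a$.
\item[(2)] Conversely, let $b\in G(e_a)$. Then $a(a^{-1}b)=e_ab=b$ and $b(b^{-1}a)=e_aa=a$, so $b\sim_w a$.
\end{itemize}
Thus $[a]_w=G(e_a)$.

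\emph{Part (c).} This is the step needing the most care.
\begin{itemize}
\item[(i)$\Rightarrow$(iii)] By (b), $a\sim_w e_a$. By compatibility, $a^2\sim_w e_a^2=e_a\sim_w a$, so $[a]_w$ is idempotent.
\item[(iii)$\Rightarrow$(i)] Assume $a^2\sim_w a$. If $a^2=a$, then $a$ is idempotent and hence invertible by Theorem \ref{t4.2}(b). Otherwise there is $x$ with $a^2x=a$. Put $c:=ax$. Then $c^2=a^2x^2=ax=c$, $ac=a^2x=a$, and $ax=c$. So $a$ is invertible.
\item[(ii)$\Leftrightarrow$(iii)] An idempotent is invertible by Theorem \ref{t4.2}(b), applied in $W(S)$. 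Conversely, an invertible $[a]_w$ has $[a]_w\sim_w e_{[a]_w}$ in $W(S)$, by (b) applied in $W(S)$. We must deduce that $[a]_w$ is itself idempotent. Write $e_{[a]_w}=[u]_w$, $[a]_w[y]_w=[u]_w$ and $[a]_w[u]_w=[a]_w$. Then $au\sim_w a$ and $ay\sim_w u$. Hence $[a]_w^2[y]_w=[a]_w[u]_w=[a]_w$ and $[a]_w[y]_w=[u]_w$, so $[a]_w$ and $[a]_w^2$ are $w$-equivalent in $W(S)$. The key observation is that $\sim_w$ on $W(S)$ is trivial: if $[p]_w[q]_w=[r]_w$ and $[r]_w[s]_w=[p]_w$, then $pq\sim_w r$ and $rs\sim_w p$, so $p\sim_w r$ by unwinding and transitivity. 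Hence $[a]_w=[a]_w^2$.
\end{itemize}
For the statement on subgroups: any subgroup of $W(S)$ has only idempotent elements by (c), and a group has only one idempotent. So the subgroup is a singleton $\{[a]_w\}$ with $a$ invertible, and then $[a]_w=[e_a]_w$.

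\emph{Part (d).} We map an idempotent $c$ to $[c]_w=G(c)$. This map is a bijection onto $W$ of the invertible part, by (b) and the disjointness in Theorem \ref{t4.2}(f). It is multiplicative because $[c_1]_w[c_2]_w=[c_1c_2]_w$.

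The main obstacle is the (ii)$\Rightarrow$(iii) direction of (c), which rests on checking that $w$-equivalence in $W(S)$ is trivial.
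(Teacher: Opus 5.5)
Your proposal is correct. Parts (a), (b) and (d) match the paper's proof in substance; the real difference is in part (c).

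The paper runs the cycle (i)$\Rightarrow$(iii)$\Rightarrow$(ii)$\Rightarrow$(i). It proves (ii)$\Rightarrow$(i) by unwinding $[ab]_w=[c]_w$ and $[ca]_w=[a]_w$ directly in $S$ into $abx_1=c$ and $cay_1=a$. This gives explicit witnesses $a(bx_1y_1)=cy_1$ and $(cy_1)a=a$, so $e_a=cy_1$. The statement on subgroups is then read off from the description of maximal subgroups in Theorem \ref{t4.2} (c).

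You prove (i)$\Leftrightarrow$(iii) entirely inside $S$. Your direct argument for (iii)$\Rightarrow$(i), taking $c:=ax$ from $a^2x=a$, is a neat shortcut that the paper does not use. You then reduce (ii)$\Rightarrow$(iii) to the fact that $\sim_w$ on $W(S)$ is equality, i.e.\ $W(W(S))=W(S)$. That fact is true: from $pq\sim_w r$ and $rs\sim_w p$ you get $r\in pS$ and $p\in rS$ whether or not the relations are equalities, hence $p\sim_w r$. It is also the conceptually cleaner reason behind (c). Applying part (b) to the semigroup $W(S)$ gives $G(\gamma)=[\gamma]_w=\{\gamma\}$ for every idempotent $\gamma\in W(S)$. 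So every invertible element of $W(S)$ is idempotent, and every subgroup is a singleton. In particular, your first sentence in (ii)$\Rightarrow$(iii) already finishes that step, and the subsequent computation with $[u]_w$ and $[y]_w$ is redundant.

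The paper's route is a little more economical: it needs no auxiliary lemma and produces $e_a$ explicitly. Yours isolates a reusable property of $W(S)$ and avoids appealing to the maximality of the groups $G(c)$.
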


{\bf Proof:}
(a) $\sim_w$ is reflexive and symmetric because of \eqref{4.3}.
In order to see that $\sim_w$
is transitive suppose $a_1\sim_w a_2$ and $a_2\sim_w a_3$.
If $a_1=a_2$ or $a_2=a_3$ then trivially $a_1\sim_w a_3$.
So suppose $a_1\neq a_2$ and $a_2\neq a_3$. 
Then $x_1,x_2,x_3,x_4$ with $a_1x_1=a_2,a_2x_2=a_1$ and
$a_2x_3=a_3,a_3x_4=a_2$ exist. Then
\begin{eqnarray*}
a_1x_1x_3=a_3,\ a_3x_4x_2=a_1,\quad\textup{so }a_1\sim_w a_3.
\end{eqnarray*}
Now suppose $a_1\sim_w a_2$ and $b_1\sim_w b_2$. 
If $a_1\neq a_2$ and $b_1\neq b_2$ then
$x_1,x_2,y_1,y_2\in S$ with $a_1x_1=a_2,a_2x_2=a_1$ and
$b_1y_1=b_2,b_2y_2=b_1$ exist. Then 
\begin{eqnarray*}
a_1b_1x_1y_1=a_2b_2,\ a_2b_2x_2y_2=a_1b_1,\quad\textup{so }
a_1b_1\sim_w a_2b_2.
\end{eqnarray*}
If $a_1=a_2$ or $b_1=b_2$, then $a_1b_1\sim_w a_2b_2$ follows
similarly.

(b) Let $a$ be invertible. 
First suppose $b\in[a]_w$ and $b\neq a$. 
Then $x_1,x_2\in S$ with $ax_1=b,bx_2=a$ exist. Then
\begin{eqnarray*}
e_ab=e_a ax_1=ax_1=b,\ b(x_2a^{-1})=aa^{-1}=e_a,\quad\textup{so }
b\in G(e_a).
\end{eqnarray*}
Now suppose $b\in G(e_a)$. Then $bb^{-1}=e_a,e_ab=b$ and 
\begin{eqnarray*}
b(b^{-1}a)=e_aa=a,\ a(a^{-1}b)=e_ab=b,\quad\textup{so }
b\in [a]_w.
\end{eqnarray*}

(c) (i)$\Rightarrow$(iii): Because of (b), $[a]_w=[e_a]_w$.
Because of (a), $[e_a]_w^2=[e_a^2]_w=[e_a]_w$. 

(iii)$\Rightarrow$(ii): An idempotent element in a semigroup
(here $W(S)$) is invertible, see Theorem \ref{t4.2} (b). 

(ii)$\Rightarrow$(i): Let $b,c\in S$ with 
$[a]_w[b]_w=[c]_w$ and $[c]_w[a]_w=[a]_w$. 
Then $[ab]_w=[c]_w$ and $[ca]_w=[a]_w$. 
If $ab\neq c$ and $ca\neq a$ then especially 
$x_1,y_1\in S$ with $(ab)x_1=c$ and $(ca)y_1=a$ exist, so
$a(bx_1y_1)=cy_1$ and $(cy_1)a=a$. This shows that then $a$
is invertible with $e_a=cy_1$. 
If $ab=c$ or $ca=a$, then $a$ invertible follows similarly
(one replaces $x_1$ or $y_1$ by an empty place).

By (ii)$\Rightarrow$(iii), a group $G([c]_w)$ with 
$[c]_w$ idempotent consists only of $[c]_w$. 
By Theorem \ref{t4.2} (c), these groups are the maximal
groups in $W(S)$, so they are the only groups in $W(S)$. 

(d) This follows from the parts (b) and (c) and Theorem
\ref{t4.2}. \hfill$\Box$

\section{Full lattices and orders in $\Q$-algebras,
some semigroups}\label{s5}
\setcounter{equation}{0}
\setcounter{table}{0}

Throughout the whole paper $A$ is as in Theorem \ref{t3.1}, 
so $A$ is a finite dimensional commutative $\Q$-algebra 
with unit element $1_A$.

\begin{definition}\label{t5.1}
An {\it order} $\Lambda$ in $A$ is a full lattice in $A$ with
\begin{eqnarray}\label{5.1}
1_A\in\Lambda\quad\textup{and}\quad \Lambda\cdot\Lambda\subset
\Lambda,
\end{eqnarray}
so it is a full lattice and a subring of $A$ with unit element.
\eqref{5.1} implies $\Lambda\cdot\Lambda=\Lambda$.
\end{definition}

This section provides basic properties of orders and full
lattices in $A$. It turns out that $\LL(A)$ becomes a
commutative semigroup, and there are quotient semigroups
$\EE(A)$, $W(\LL(A))$ and $W(\EE(A))$. 
The notions from section \ref{s4} will be applied and compared.

Section \ref{s6} will go deeper into the theory and present
one main result of the paper on the structure of the
quotient semigroup $\EE(A)$.

Part (c) of the following lemma is a version of Krull's
lemma, which we took from \cite[1.3.2]{DTZ62}.

\begin{lemma}\label{t5.2}
(a) Let $L$, $L_1$ and $L_2$ be full lattices in $A$. Then
\begin{eqnarray*}
L_1\cdot L_2&:=& \{\sum_{i\in I}a_ib_i\,|\, I\textup{ a finite 
index set}, a_i\in L_1,b_i\in L_2\}\textup{ and}\\
L_1:L_2 &:=&\{a\in A\,|\, a\cdot L_2\subset L_1\}
\end{eqnarray*}
are full lattices in $A$, and 
\begin{eqnarray*}
\OO(L)&:=& L:L
\end{eqnarray*}
is an order in $A$. It is called {\it the order of }$L$.

(b) Let $L_1,L_2,L_3$ and $L_4$ be full lattices in $A$. Then
\begin{eqnarray}\label{5.2}
L_1L_2= L_2L_1,&& (L_1L_2)L_3= L_1(L_2L_3),\\
(L_1:L_2)L_3\subset (L_1L_3):L_2,&& 
(L_1:L_2)(L_3:L_4)\subset (L_1L_3):(L_2L_4).\nonumber
\end{eqnarray}

(c) (Krull's lemma, \cite[1.3.2]{DTZ62})
Let $\Lambda$ and $L$ be full lattices in $A$ with
$\Lambda\cdot\Lambda\subset\Lambda$ and $\Lambda\cdot L=L$.
Then $1_A\in\Lambda$, so $\Lambda$ is an order.
\end{lemma}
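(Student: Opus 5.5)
Part (a) is proved by reducing it to Lemma \ref{t2.3} (a). Part (b) is a direct computation, and part (c) uses a determinant (Cayley--Hamilton) trick.

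For $L_1\cdot L_2$: choose $\Z$-bases $(a_i)$ of $L_1$ and $(b_j)$ of $L_2$. Then $L_1L_2$ is the $\Z$-span of the finitely many products $a_ib_j$, so it is a lattice. It is full because $\Q L_1=A$ and $\Q L_2=A$ give $\Q\cdot L_1L_2\supset A\cdot A\ni 1_A\cdot a$ for every $a\in A$.

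For $L_1:L_2$: apply Lemma \ref{t2.3} (a) with $V_1=V_2=V_3=A$, the bilinear map $\beta(a,b)=ab$, and the lattices $L_2$ in the first slot and $L_1$ as target. Condition \eqref{2.1} holds with the single element $a_1=1_A$, since $\ker\beta(1_A,.)=\{0\}$. Because $\beta$ is symmetric, the set $\{b\,|\,bL_2\subset L_1\}$ is a full lattice.

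For $\OO(L)=L:L$: it contains $1_A$, and $a,b\in L:L$ gives $abL\subset aL\subset L$. So it is a full lattice and a subring with $1_A$, i.e.\ an order.

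Part (b) is elementwise. Commutativity and associativity of the product lattices follow from those of $A$, because both sides are the $\Z$-span of the same products. For $a\in L_1:L_2$ and $c\in L_3$ we have $acL_2=c(aL_2)\subset cL_1\subset L_1L_3$. Hence $ac\in(L_1L_3):L_2$, and since the right side is a $\Z$-module, sums of such products also lie in it. Similarly, for $a\in L_1:L_2$ and $c\in L_3:L_4$, $ac\cdot(xy)=(ax)(cy)\in L_1L_3$ for $x\in L_2$, $y\in L_4$. Extending $\Z$-linearly gives $ac\in(L_1L_3):(L_2L_4)$.

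Part (c) is the only non-routine point. Let $(l_1,\dots,l_m)$ be a $\Z$-basis of $L$. Since $\Lambda L=L$, each $l_i=\sum_k\lambda_k x_k$ with $\lambda_k\in\Lambda$, $x_k\in L$. Expanding the $x_k$ in the basis, using that $\Lambda$ is additively closed, gives
\[
l_i=\sum_{j=1}^m \mu_{ij}l_j\quad\text{with }\mu_{ij}\in\Lambda.
\]
Put $M=(\delta_{ij}1_A-\mu_{ij})\in M_{m\times m}(A)$, so $M\cdot(l_1,\dots,l_m)^t=0$. Multiplying by the adjugate matrix, which is legitimate over the commutative ring $A$, gives $d\,l_j=0$ for all $j$, where
\[
d:=\det M=1_A-\lambda,\qquad \lambda\in\Lambda .
\]
Here $\lambda\in\Lambda$ because expanding the determinant produces $1_A$ plus a $\Z$-combination of products of at least one $\mu_{ij}$, and $\Lambda\cdot\Lambda\subset\Lambda$. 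Then $dL=0$, and since $L$ spans $A$ over $\Q$, $d\cdot A=0$, so $d=d\cdot 1_A=0$. Hence $1_A=\lambda\in\Lambda$.

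The main care point is the expansion of $\det M$: one must check that every term other than $1_A$ really lies in $\Lambda$. Nonempty products lie in $\Lambda$ because $\Lambda\Lambda\subset\Lambda$, and $\Z$-combinations of them lie in $\Lambda$ because it is a $\Z$-module. No unit is needed in $\Lambda$ for this, since the isolated $1_A$ term is kept separate.
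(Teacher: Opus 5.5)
Your proposal is correct and follows the paper's proof essentially step by step: Lemma \ref{t2.3} (a) with the multiplication map for $L_1:L_2$, elementwise checks for (b), and the adjugate/determinant argument for Krull's lemma in (c). The only small deviation is that you get fullness of $L_1L_2$ from bilinearity ($\Q\cdot L_1L_2$ contains every product $xy$ with $x,y\in A$) rather than from a unit of $A$ lying in $L_1$, and you spell out details the paper leaves implicit (condition \eqref{2.1} via $a_1=1_A$, and $\det M=1_A-\lambda$ with $\lambda\in\Lambda$).
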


{\bf Proof:} 
(a) As $L_1$ generates $A$ over $\Q$, it contains a unit
$a\in A^{unit}$ in $A$ 
(in fact, it contains many such elements).
As $L_2$ generates $A$ over $\Q$, also $a\cdot L_2$ and 
$L_1\cdot L_2$ generate $A$ over $\Q$. As $L_1\cdot L_2$ is
a finitely generated $\Z$-module, it is a full lattice in $A$.

In order to see that $L_1:L_2$ is a full lattice in $A$ one
applies Lemma \ref{t2.3} (a) with
\begin{eqnarray*}
(V_1,V_2,V_3,\beta,L_1,L_3)_{\textup{in Lemma \ref{t2.3}}}
\sim(A,A,A,\textup{multiplication},L_2,L_1)_{\textup{here}}.
\end{eqnarray*}
Therefore also $\OO(L)=L:L$ is a full lattice. Obviously, 
it contains $1_A$ and is multiplication invariant, so it is
an order.

(b) Trivial.

(c) Choose a $\Z$-basis $(b_1,...,b_n)$ of $L$.
Because of $L=\Lambda\cdot L$, there are $a_{ij}\in \Lambda$
for $i,j\in\{1,...,n\}$ with 
\begin{eqnarray*}
(b_1,...,b_n)=(b_1,...,b_n)\cdot(a_{ij}),\quad\textup{so}\quad
0=(b_1,...,b_n)\cdot (\delta_{ij}-a_{ij}).
\end{eqnarray*}
Multiplying from the right with the adjoint 
of the matrix $(\delta_{ij}-a_{ij})$,
we obtain $b_k\cdot \det(\delta_{ij}-a_{ij})=0$,
so $A\cdot \det(\delta_{ij}-a_{ij})=0$, so
$\det(\delta_{ij}-a_{ij})=0$. Because of 
$\Lambda\cdot\Lambda\subset\Lambda$, all products of the
$a_{ij}$ are in $\Lambda$, so also $1_A\in\Lambda$. 
\hfill$\Box$

\bigskip
Some basic observations on products of orders and the orders
of products $L_1 L_2$ and quotients $L_1:L_2$ are as 
follows.

\begin{lemma}\label{t5.3}
(a) If $\Lambda_1$ and $\Lambda_2\in\LL(A)$ are orders,
then the order $\Lambda_1\Lambda_2$ contains $\Lambda_1$
and $\Lambda_2$, and it is the smallest order with this property.
So, any order which contains $\Lambda_1$ and $\Lambda_2$,
contains $\Lambda_1\Lambda_2$. 

(b) Let $L_1,L_2\in\LL(A)$ with $\OO(L_1)=\Lambda_1$ and
$\OO(L_2)=\Lambda_2$. Then
\begin{eqnarray}\label{5.3}
\OO(L_1L_2)\supset\Lambda_1\Lambda_2,\quad 
\OO(L_1:L_2)\supset\Lambda_1\Lambda_2.
\end{eqnarray}
If $L_1$ is invertible then
\begin{eqnarray}\label{5.4}
L_1:L_2 = L_1\cdot (\Lambda_1:L_2).
\end{eqnarray}
If $L_2$ is invertible then
\begin{eqnarray}\label{5.5}
L_1:L_2 = (L_1:\Lambda_2)\cdot L_2^{-1}.
\end{eqnarray}
\end{lemma}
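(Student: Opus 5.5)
Part (a) and the inclusions \eqref{5.3} are formal consequences of Lemma \ref{t5.2}; I would prove them first. For \eqref{5.4} and \eqref{5.5} I would first identify the idempotent $e_{L}$ of an invertible full lattice with $\OO(L)$, and then compare the two sides by multiplying with $L^{-1}$.

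For (a): $\Lambda_i=\Lambda_i\cdot 1_A\subset\Lambda_1\Lambda_2$ because $1_A\in\Lambda_{3-i}$. In particular $1_A\in\Lambda_1\Lambda_2$. Moreover $(\Lambda_1\Lambda_2)(\Lambda_1\Lambda_2)=(\Lambda_1\Lambda_1)(\Lambda_2\Lambda_2)\subset\Lambda_1\Lambda_2$ by \eqref{5.2}. By Lemma \ref{t5.2} (a), $\Lambda_1\Lambda_2$ is a full lattice, so it is an order. If an order $\Lambda$ contains $\Lambda_1$ and $\Lambda_2$, it contains all products $ab$ with $a\in\Lambda_1$, $b\in\Lambda_2$, and all finite sums of them, hence $\Lambda_1\Lambda_2$.

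For \eqref{5.3}, by (a) it suffices to show $\Lambda_1,\Lambda_2\subset\OO(L_1L_2)$ and $\Lambda_1,\Lambda_2\subset\OO(L_1:L_2)$. Since $1_A\in\Lambda_i$ and $\Lambda_iL_i\subset L_i$, we have $\Lambda_iL_i=L_i$. Then $\Lambda_1(L_1L_2)=L_1L_2$ and $\Lambda_2(L_1L_2)=L_1L_2$. For the quotient:
\begin{eqnarray*}
\Lambda_1(L_1:L_2)L_2&\subset&\Lambda_1L_1=L_1,\\
(L_1:L_2)\Lambda_2L_2&=&(L_1:L_2)L_2\subset L_1,
\end{eqnarray*}
so $\Lambda_1$ and $\Lambda_2$ map $L_1:L_2$ into itself.

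The one step needing care is that, for $L$ invertible with inverse $L^{-1}$, we have $e_L=LL^{-1}=\OO(L)$. Put $e:=LL^{-1}$. Then $ee=e$ and $e$ is a full lattice, so Krull's lemma (Lemma \ref{t5.2} (c)) applied to $\Lambda=L=e$ gives $1_A\in e$, i.e. $e$ is an order. From $eL=L$ we get $e\subset\OO(L)$. Conversely,
$$\OO(L)=\OO(L)\cdot 1_A\subset \OO(L)\,e=\OO(L)LL^{-1}=LL^{-1}=e.$$

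Now \eqref{5.4}. The inclusion $\supset$ holds because $L_1(\Lambda_1:L_2)L_2\subset L_1\Lambda_1=L_1$. For $\subset$, compute
$$L_1^{-1}(L_1:L_2)L_2\subset L_1^{-1}L_1=\Lambda_1,$$
so $L_1^{-1}(L_1:L_2)\subset\Lambda_1:L_2$. Multiplying by $L_1$ gives $\Lambda_1(L_1:L_2)\subset L_1(\Lambda_1:L_2)$. By \eqref{5.3}, $\Lambda_1(L_1:L_2)=L_1:L_2$, which proves $\subset$.

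For \eqref{5.5}, the inclusion $\supset$ holds because
$$(L_1:\Lambda_2)L_2^{-1}L_2=(L_1:\Lambda_2)\Lambda_2\subset L_1.$$
For $\subset$, note that
$$(L_1:L_2)L_2\Lambda_2=(L_1:L_2)L_2\subset L_1,$$
so $(L_1:L_2)L_2\subset L_1:\Lambda_2$. Multiplying by $L_2^{-1}$ gives $(L_1:L_2)\Lambda_2\subset(L_1:\Lambda_2)L_2^{-1}$. Again $\Lambda_2(L_1:L_2)=L_1:L_2$ by \eqref{5.3}, which proves $\subset$.
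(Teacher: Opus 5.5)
Your proof is correct and follows the paper's argument essentially line by line. You use the same inclusions for \eqref{5.3}, and the same comparison by multiplying with $L_1^{-1}$ resp.\ $L_2^{-1}$ for \eqref{5.4} and \eqref{5.5}. You additionally justify $LL^{-1}=\OO(L)$ via Krull's lemma and spell out the final step $\Lambda_1(L_1:L_2)=L_1:L_2$; the paper uses both tacitly at this point, since $e_L=\OO(L)$ is only established later in Theorem \ref{t5.6} (c), so your version is slightly more self-contained.
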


{\bf Proof:} (a) $\Lambda_1\Lambda_2\supset\Lambda_1$
because of $1_A\in\Lambda_2$.

(b) $\Lambda_1(L_1L_2)=(\Lambda_1L_1)L_2= L_1L_2$
shows $\Lambda_1\subset\OO(L_1L_2)$, and analogously
$\Lambda_2\subset\OO(L_1L_2)$. 
With part (a) this implies $\Lambda_1\Lambda_2\subset
\OO(L_1L_2)$. 

$(\Lambda_1(L_1:L_2))L_2=\Lambda_1((L_1:L_2)L_2)\subset
\Lambda_1L_1=L_1$, so $\Lambda_1(L_1:L_2)\subset (L_1:L_2)$,
so $\Lambda_1\subset\OO(L_1:L_2)$. 

$(\Lambda_2(L_1:L_2))L_2=(L_1:L_2)(\Lambda_2L_2)=
(L_1:L_2)L_2\subset L_1$, so 
$\Lambda_2(L_1:L_2)\subset (L_1:L_2)$,
so $\Lambda_2\subset\OO(L_1:L_2)$. 

Suppose that $L_1$ is invertible. Then
\begin{eqnarray*}
(L_1(\Lambda_1:L_2))L_2 \subset L_1\Lambda_1=L_1,
\quad\textup{so } 
L_1(\Lambda_1:L_2)\subset L_1:L_2,\\
(L_1^{-1} (L_1:L_2)) L_2 \subset L_1^{-1} L_1=\Lambda_1,
\quad\textup{so }L_1^{-1} (L_1:L_2)\subset \Lambda_1:L_2,\\
\textup{so } L_1:L_2\subset L_1 (\Lambda_1:L_2).
\end{eqnarray*}

Suppose that $L_2$ is invertible. Then
\begin{eqnarray*}
((L_1:\Lambda_2)L_2^{-1})L_2 \subset (L_1:\Lambda_2)\Lambda_2
\subset L_1,\quad\textup{so } 
(L_1:\Lambda_2)L_2^{-1}\subset L_1:L_2,\\
((L_1:L_2)L_2)\Lambda_2 \subset (L_1:L_2)L_2\subset L_1,
\quad\textup{so }(L_1:L_2)L_2\subset L_1:\Lambda_2,\\
\textup{so } L_1:L_2\subset (L_1:\Lambda_2)L_2^{-1}.
\hspace*{2cm}\Box
\end{eqnarray*}

\begin{definition}\label{t5.4}
Recall from the Notations \ref{t1.5} that $A^{unit}$ is the
set of units in $A$.

(a) An equivalence relation $\sim_\varepsilon$ on $\LL(A)$
is defined as follows,
\begin{eqnarray*}
L_1\sim_\varepsilon L_2 &:\iff& a\in A^{unit}
\textup{ with }a\cdot L_1=L_2\textup{ exists.}
\end{eqnarray*}
The equivalence class of a full lattice $L$ with respect to
$\sim_\varepsilon$ is called $\varepsilon$-class of $L$ and
is denoted $[L]_\varepsilon$. The set of $\varepsilon$-classes
of full lattices is denoted 
$\EE(A) \ (=\LL(A)/\sim_\varepsilon)$. 

(b) A full lattice $L$ in $A$ with $\OO(L)\supset\Lambda$
for some order $\Lambda$ is called a {\it $\Lambda$-ideal}.
It is called an {\it exact $\Lambda$-ideal} if $\OO(L)=\Lambda$.
\end{definition}

\begin{lemma}\label{t5.5}
The multiplication of full lattices gives the structure
of a commutative semigroup on $\LL(A)$. It induces the
structure of a commutative semigroup on $\EE(A)$.
If $L_1\sim_\varepsilon L_2$, then $\OO(L_1)=\OO(L_2)$, so the order 
$\OO([L_1]_\varepsilon):=\OO(L_1)$ is well defined. 
Also the division map $(L_1,L_2)\mapsto L_1:L_2$ on $\LL(A)$
induces a division map 
$([L_1]_\varepsilon,[L_2]_\varepsilon)\mapsto [L_1]_\varepsilon:[L_2]_\varepsilon:=[L_1:L_2]_\varepsilon$ 
on $\EE(A)$. 
\end{lemma}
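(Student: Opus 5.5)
The plan is to verify the four claims of Lemma \ref{t5.5} directly from the definitions, using Lemma \ref{t5.2} for the basic properties of multiplication and division. The only input about $A^{unit}$ that is needed is that it is a multiplicative group, so that for $a\in A^{unit}$ and $L\in\LL(A)$ the set $aL$ is again a full lattice: $a$ acts as a $\Q$-linear automorphism of $A$.

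\emph{Semigroup structure on $\LL(A)$.} Lemma \ref{t5.2} (a) shows that $L_1L_2\in\LL(A)$. Lemma \ref{t5.2} (b), i.e.\ \eqref{5.2}, gives commutativity and associativity. So $\LL(A)$ is a commutative semigroup in the sense of Definition \ref{t4.1} (a).

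\emph{Descent to $\EE(A)$.} Suppose $L_1'=aL_1$ and $L_2'=bL_2$ with $a,b\in A^{unit}$. Since $A$ is commutative and $\Z$-bilinear sums can be rearranged,
\[
L_1'L_2'=(ab)L_1L_2 ,
\]
and $ab\in A^{unit}$. Hence $[L_1L_2]_\varepsilon$ depends only on the classes $[L_1]_\varepsilon$ and $[L_2]_\varepsilon$. Commutativity and associativity are then inherited from $\LL(A)$.

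\emph{Invariance of the order.} Let $L_2=aL_1$ with $a\in A^{unit}$. For $x\in A$ we have $xL_2=a(xL_1)$. Multiplication by $a$ is injective on subsets because $a$ is invertible, so $xL_2\subset L_2$ holds if and only if $xL_1\subset L_1$. Therefore $\OO(L_2)=\OO(L_1)$, and $\OO([L_1]_\varepsilon):=\OO(L_1)$ is well defined.

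\emph{Descent of the division map.} Let $L_1'=aL_1$ and $L_2'=bL_2$ with $a,b\in A^{unit}$. For $x\in A$,
\[
x L_2'\subset L_1' \iff xbL_2\subset aL_1 \iff (a^{-1}bx)L_2\subset L_1 \iff a^{-1}bx\in L_1:L_2 .
\]
So $L_1':L_2'=ab^{-1}(L_1:L_2)$. By Lemma \ref{t5.2} (a) this is a full lattice, and it lies in the $\varepsilon$-class of $L_1:L_2$. Hence $[L_1]_\varepsilon:[L_2]_\varepsilon:=[L_1:L_2]_\varepsilon$ is well defined.

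None of these steps is a real obstacle. The only point needing a moment's care is the direction of the unit factor in the division formula, namely that the correct factor is $ab^{-1}$ and not $a^{-1}b$. For that reason I would write out the chain of equivalences above explicitly rather than leave it as a remark.
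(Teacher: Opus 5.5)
Your proposal is correct and follows essentially the same route as the paper: products transform as $(aL_1)(bL_2)=ab\,L_1L_2$ and quotients as $aL_1:bL_2=ab^{-1}(L_1:L_2)$, with the same unit factor $ab^{-1}$ that the paper uses. The only difference is that you prove $\OO(aL)=\OO(L)$ separately, whereas the paper leaves it implicit; it is just the special case $L_1=L_2$, $a=b$ of your division formula.
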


{\bf Proof:} $\LL(A)$ is with the multiplication of full lattices a commutative semigroup by \eqref{5.2}.

Suppose $L_1\sim_\varepsilon L_3$ and $L_2\sim_\varepsilon L_4$.
Then $a_1,a_2\in A^{unit}$ with $a_1L_1=L_3$ and $a_2L_2=L_4$
exist. Then $a_1a_2 L_1 L_2=L_3 L_4$
and $(a_1a_2^{-1})\cdot (L_1:L_2)=L_3:L_4$, so 
$L_1 L_2\sim_\varepsilon L_3 L_4$ and 
$L_1:L_2\sim_\varepsilon L_3:L_4$. \hfill$\Box$

\bigskip
Lemma \ref{t5.5} sets the stage for applying and comparing
the notions from section \ref{s4} to the semigroups
$\LL(A)$ and $\EE(A)$. The following theorem discusses the
idempotents and the invertible elements in both semigroups.
Remarkably, the division maps in $\LL(A)$ and $\EE(A)$ play
a much less prominent role than the multiplications.

The major part of Theorem \ref{t5.6} is essentially contained in 
\cite[section 1]{DTZ62}. Though the assumptions there mean in
our situation that $A$ is an algebraic number field.
Therefore we reprove Theorem \ref{t5.6}. The same holds
for Theorem \ref{t5.7}.
The equivalence (i)$\iff$(v) in Theorem \ref{t5.6} (c) 
is not in \cite{DTZ62}, but in \cite[26.4]{Fa65}. 
Though the proof below of (i)$\iff$(v) is not given
in \cite{Fa65}.

\begin{theorem}\label{t5.6}\cite[1.3.3, 1.3.6, 1.3.7]{DTZ62}
\cite[26.4]{Fa65}

(a) $\Lambda\in\LL(A)$ is an idempotent in the semigroup
$\LL(A)$ if and only if it is an order.

(b) $[L]_\varepsilon$ is an idempotent in the semigroup $\EE(A)$
if and only if the class $[L]_\varepsilon$ contains an order.

(c) Let $L\in\LL(A)$. The following five properties 
are equivalent:
\begin{list}{}{}
\item[(i)] 
$L$ is invertible in the semigroup $\LL(A)$.
\item[(ii)] 
$L_2\in\LL(A)$ with $L\cdot L_2=\OO(L)$ exists.
\item[(iii)] 
$L\cdot (\OO(L):L)=\OO(L)$. 
\item[(iv)]
$[L]_\varepsilon$ is invertible in the semigroup $\EE(A)$. 
\item[(v)]
$\OO(\OO(L):L)=\OO(L)$. 
\end{list}
If this holds then $e_L=\OO(L)$, $L^{-1}=\OO(L):L$,
$L^{-1}$ is invertible with $\OO(L^{-1})=\OO(L)$,
and $e_{[L]_\varepsilon}=[\OO(L)]_\varepsilon$, 
$[L]_{\varepsilon}^{-1}=[L^{-1}]_\varepsilon$.
\end{theorem}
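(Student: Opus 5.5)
Parts (a) and (b) are direct. For (a): if $\Lambda$ is an order then $\Lambda\Lambda=\Lambda$ by Definition \ref{t5.1}. Conversely, suppose $\Lambda\Lambda=\Lambda$. Then $\Lambda\cdot\Lambda\subset\Lambda$, and Krull's lemma (Lemma \ref{t5.2} (c)) with $L=\Lambda$ gives $1_A\in\Lambda$, so $\Lambda$ is an order. For (b): if $[L]_\varepsilon$ contains an order $\Lambda$, then $[L]_\varepsilon^2=[\Lambda^2]_\varepsilon=[\Lambda]_\varepsilon$. Conversely, if $[L^2]_\varepsilon=[L]_\varepsilon$, choose $a\in A^{unit}$ with $L^2=aL$ and put $\Lambda:=a^{-1}L\in[L]_\varepsilon$. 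Then $\Lambda\Lambda=a^{-2}L^2=a^{-1}L=\Lambda$, and by (a) $\Lambda$ is an order.

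For (c) I plan the cycle (i)$\Rightarrow$(ii)$\Rightarrow$(iii)$\Rightarrow$(i), then (i)$\iff$(iv), then (iii)$\iff$(v).

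\emph{(i)$\Rightarrow$(ii).} If $L$ is invertible, $e_L$ is idempotent, hence an order $\Lambda'$ by (a), with $L\Lambda'=L$ and $LL^{-1}=\Lambda'$. From $\Lambda'L=L$ we get $\Lambda'\subset\OO(L)$. For the reverse inclusion, $\OO(L)\Lambda'=\OO(L)LL^{-1}=LL^{-1}=\Lambda'$, and since $1_A\in\OO(L)$ this gives $\OO(L)\subset\Lambda'$. So $e_L=\OO(L)$, and (ii) holds with $L_2=L^{-1}$.

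\emph{(ii)$\Rightarrow$(iii).} $L_2\subset\OO(L):L$, so $\OO(L)=LL_2\subset L(\OO(L):L)\subset\OO(L)$.

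\emph{(iii)$\Rightarrow$(i).} The idempotent $\OO(L)$ and $b=\OO(L):L$ satisfy $L\OO(L)=L$ and $Lb=\OO(L)$.

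\emph{The formulas.} $L^{-1}=\OO(L):L$ follows from uniqueness in Theorem \ref{t4.2} (a): $\OO(L)\cdot(\OO(L):L)=\OO(L):L$ by \eqref{5.3}. Also $e_{L^{-1}}=e_L$, and $e_{L^{-1}}=\OO(L^{-1})$ by the argument just given, so $\OO(L^{-1})=\OO(L)$.

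\emph{(i)$\Rightarrow$(iv).} Project the identities to $\EE(A)$.

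\emph{(iv)$\Rightarrow$(i).} By (b), $e_{[L]_\varepsilon}=[\Lambda]_\varepsilon$ for some order $\Lambda$. The relation $[L\Lambda]_\varepsilon=[L]_\varepsilon$ means $L\Lambda=aL$ with $a\in A^{unit}$. Also there are $L_2$ and $u\in A^{unit}$ with $LL_2=u\Lambda$. Then $\Lambda\cdot(u^{-1}LL_2)=u^{-1}LL_2$, and $a^{-1}\Lambda L=L$ should let me conclude that $L\cdot(u^{-1}L_2)=\Lambda$ and $L\Lambda=L$. The trick is to show $a\Lambda=\Lambda$. Multiplying $L\Lambda=aL$ by $L_2$ gives $u\Lambda=u\Lambda\Lambda=au\Lambda$, so $a\Lambda=\Lambda$, hence $a\in\Lambda^{unit}$ and $L\Lambda=L$. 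Thus $L$ is invertible with inverse $u^{-1}L_2\Lambda$. The same computation gives $e_{[L]_\varepsilon}=[\OO(L)]_\varepsilon$ and $[L]_\varepsilon^{-1}=[L^{-1}]_\varepsilon$.

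\emph{(iii)$\Rightarrow$(v)} is already shown above. The main obstacle is \emph{(v)$\Rightarrow$(iii)}. Write $\Lambda=\OO(L)$ and $M:=\Lambda:L$, so that $\Lambda\subset LM\subset\Lambda$ fails only at the first inclusion; in general $LM$ is an ideal of $\Lambda$ contained in $\Lambda$. I would set $I:=LM\subset\Lambda$ and compute $\OO(M)\supset I:I$-type information. The key observation is that $I\cdot(\Lambda:I)$ relates to $\OO(M)$: one has $M:M\supset\Lambda$, and $\Lambda:I=(\Lambda:L):M=M:M=\OO(M)$ by associativity of the colon, i.e. $(\Lambda:L):M=\Lambda:(LM)$. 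Hence (v) gives $\Lambda:I=\Lambda$. Now I need a lemma: \emph{an ideal $I\subsetneq\Lambda$ of full rank has $\Lambda:I\supsetneq\Lambda$}. To prove it, pick a maximal ideal $P\supset I$ of $\Lambda$, so that $\Lambda/P$ is a finite field since $I$ has finite index. It suffices to find $x\notin\Lambda$ with $xP\subset\Lambda$. Choose $r\in\N$ with $r\Lambda\subset P$ and take a product $P_1\cdots P_s\subset r\Lambda$ of maximal ideals, with $s$ minimal containing $P$. This is the Noetherian argument from Dedekind theory, adapted: the minimal product $P\cdot J\subset r\Lambda$ with $J\not\subset r\Lambda$ gives $x=r^{-1}j$ for $j\in J\setminus r\Lambda$. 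I expect this construction, in particular making the ``product of primes contained in $r\Lambda$'' step work in the non-reduced setting, to be the delicate point. I would check it by working in the finite ring $\Lambda/r\Lambda$, which is Artinian, where the needed annihilator element exists: the socle of $\Lambda/r\Lambda$ meets $\Ann(P)$ nontrivially. With the lemma, $\Lambda:I=\Lambda$ forces $I=\Lambda$, which is (iii).
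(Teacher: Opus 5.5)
Your proposal is correct. For (a), (b) and (i)--(iv) it is essentially the paper's argument. Your cancellation step in (iv)$\Rightarrow$(i) (multiply $L\Lambda=aL$ by $L_2$ to get $a\Lambda=\Lambda$) is actually more explicit than the paper's. One small slip: in (i)$\Rightarrow$(ii), the inclusion $\OO(L)\subset\OO(L)\Lambda'$ needs $1_A\in\Lambda'$, not $1_A\in\OO(L)$. This is harmless, since $\Lambda'$ is an order.

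The genuine difference is in (v)$\Rightarrow$(iii). You and the paper reach the same identity $\OO(L):\bigl(L(\OO(L):L)\bigr)=\OO(\OO(L):L)=\OO(L)$, which is the paper's \eqref{5.8}. In other words, $\Lambda:I=\Lambda$ for the $\Lambda$-ideal $I=L(\Lambda:L)\subset\Lambda$.

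From there the routes diverge.
\begin{itemize}
\item The paper dualizes with Lemma \ref{t2.3} (b) to get $I\cdot\Lambda^{*\Z}=\Lambda^{*\Z}$. It notes $I^2\subset I$ and runs Krull's determinant trick on the dual lattice to obtain $1_A\in I$.
\item You prove instead that every proper full ideal $I\subsetneq\Lambda$ satisfies $\Lambda:I\supsetneq\Lambda$, using a maximal ideal $P\supset I$.
\end{itemize}

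The step you flag as delicate is fine.
\begin{itemize}
\item $\Lambda/r\Lambda$ is finite, so it has finitely many maximal ideals and a nilpotent Jacobson radical. Hence some product of maximal ideals lies in $r\Lambda$.
\item In a minimal such product, $P$ must occur, because $P$ is prime and contains the product.
\item Minimality gives $J\not\subset r\Lambda$ for the cofactor $J$, so $x=r^{-1}j$ with $j\in J$, $j\notin r\Lambda$ works.
\end{itemize}
Equivalently, $\Ann_{\Lambda/r\Lambda}(P/r\Lambda)\neq 0$, because maximal ideals of an Artinian ring are associated primes. Reducedness plays no role.

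As for what each route buys: the paper's stays within linear algebra (dual lattices and Cayley--Hamilton). It also reuses Lemma \ref{t2.3}, which drives Step 2 of the proof of Theorem \ref{t6.5}. Yours is standard commutative algebra. It makes visible that (v)$\Rightarrow$(iii) amounts to the one-dimensionality statement that $\Lambda:P\neq\Lambda$ for every maximal ideal $P$ of an order. The cost is invoking the Artinian structure of the finite quotients of $\Lambda$.
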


{\bf Proof:}
(a) $\Leftarrow$: Let $\Lambda$ be an order. Because
of $1\in\Lambda$, we have $\Lambda\Lambda=\Lambda$ (and not just
$\Lambda\Lambda\subset\Lambda$), so $\Lambda$
is an idempotent. 

$\Rightarrow$: Let $\Lambda$ be an idempotent, so
$\Lambda\Lambda=\Lambda$. 
Lemma \ref{t5.2} (c) (Krull's lemma) for $\Lambda=L$ shows
$1\in\Lambda$. Therefore $\Lambda$ is an order.

(b) $\Leftarrow$: Let $\Lambda\in[L]_\varepsilon$ be an order.
Then
$$[L]_\varepsilon [L]_\varepsilon
=[\Lambda]_\varepsilon [\Lambda]_\varepsilon
=[\Lambda\Lambda]_\varepsilon
=[\Lambda]_\varepsilon 
=[L]_\varepsilon.$$

$\Rightarrow$: Suppose 
$[L]_\varepsilon[L]_\varepsilon=[L]_\varepsilon$.
Then $LL\sim_\varepsilon L$, so $a\in A^{unit}$ with 
$aLL=L$ exists. Then $(aL)^2=(aL)$, and $aL\in [L]_\varepsilon$ 
is an idempotent in $\LL(A)$. 
By part (a), $aL$ is an order. 

(c) (i)$\Rightarrow$(ii): Let $L$ be invertible in the
semigroup $\LL(A)$. Then $L_2$ and $\Lambda\in \LL(A)$
with $\Lambda$ an order and $LL_2=\Lambda$ and 
$\Lambda L=L$ exist. The last condition says 
$\Lambda\subset\OO(L)$. 
Now $LL_2=\OO(L)LL_2=\OO(L)\Lambda=\OO(L)$ 
($=$ and not just $\subset$ because of $1\in\Lambda$).

(ii)$\Rightarrow$(iii): By definition of $\OO(L):L$,
$L_2\subset\OO(L):L$ and $L\cdot(\OO(L):L)\subset\OO(L)$.
With $L\cdot L_2=\OO(L)$ this shows $L\cdot (\OO(L):L)=\OO(L)$. 

(iii)$\Rightarrow$(iv): 
$[L]_\varepsilon[\OO(L):L]_\varepsilon=[L(\OO(L):L)]_\varepsilon
=[\OO(L)]_\varepsilon$
and $[\OO(L)]_\varepsilon[L]_\varepsilon=[\OO(L)L]_\varepsilon
=[L]_\varepsilon$. 

(iv)$\Rightarrow$(i): 
Let $[L]_\varepsilon$ be invertible in the semigroup $\EE(A)$
with $e_{[L]_\varepsilon}=[\Lambda]_\varepsilon$ with $\Lambda$
an order. Then $[L]_\varepsilon=[L]_\varepsilon
[\Lambda]_\varepsilon=[L\Lambda]_\varepsilon$, and 
$L_1\in\LL(A)$ with 
$[L]_\varepsilon [L_1]_\varepsilon=[\Lambda]_\varepsilon$ exists. 
Thus $a\in A^{unit}$ with $aLL_1=\Lambda$ exists, so 
$\Lambda\supset\OO(L)$. Now $[L]_\varepsilon=
[L\Lambda]_\varepsilon$ shows $\Lambda=\OO(L)$, so $L$ is invertible in $\LL(A)$.

Now suppose that (i)--(iv) hold. 
(iii) and $\OO(L)L=L$ show that $L$ is invertible with
$e_L=\OO(L)$.
\eqref{5.3} shows $\OO(L)(\OO(L):L)=\OO(L):L$. 
With (iii) this also shows 
$L^{-1}=\OO(L):L$ and $\OO(L^{-1})=\OO(L)$.
The properties of the classes in $\EE(A)$ are clear.

(i)$\Rightarrow$(v): 
(i) implies $\OO(L):L=L^{-1}$ and $\OO(L^{-1})=\OO(L)$. 

(v)$\Rightarrow$(i): 
We will apply Lemma \ref{t2.3} with
\begin{eqnarray*}
&&(V_1,V_2,V_3,\beta,L_1,L_3)_{\textup{Lemma \ref{t2.3}}}\\
&= &(A,A,A,\textup{multiplication},L(\OO(L):L),
\OO(L))_{\textup{here}}.
\end{eqnarray*}
We will write $\beta^*:A\times A^*\to A^*$ also as 
multiplication.
We will show
\begin{eqnarray}
L_1^2&\subset& L_1,\label{5.6}\\
L_1\cdot L_3^{*\Z}&=& L_3^{*\Z}.\label{5.7}
\end{eqnarray}
A variant of Krull's lemma Lemma \ref{t5.2} (c) with the lattice
$L_3^{*\Z}\in\LL(A^*)$ instead of a full lattice in $A$ works and
shows $1_A\in L_1=L(\OO(L):L)$, so $L_1=\OO(L)$, so $L$ 
is invertible.

\eqref{5.6} follows from
\begin{eqnarray*}
L_1^2= (L(\OO(L):L))^2\subset \OO(L)\cdot L(\OO(L):L)
= L(\OO(L):L)=L_1.
\end{eqnarray*}
\eqref{5.7} is by Lemma \ref{t2.3} (b) equivalent to
\begin{eqnarray}\label{5.8}
L_3:L_1=L_3.
\end{eqnarray}
This is proved as follows. Consider $a\in L_3:L_1
=\OO(L):(L(\OO(L):L))$. 
\begin{eqnarray*}
&&L(\OO(L):L)a\subset \OO(L),\\
\textup{so}&& (\OO(L):L)a\textup{ maps }L\textup{ to }\OO(L),\\
\textup{so}&& (\OO(L):L)a\subset \OO(L):L,\\
\textup{so}&& a\in \OO(\OO(L):L) =\OO(L)
\textup{ (here (v) is used)},\\
\textup{so}&& \OO(L):(L(\OO(L):L))=\OO(L).
\end{eqnarray*}
The implication (v)$\Rightarrow$(i) is proved. \hfill$\Box$

\bigskip
Now we come to the weak equivalence classes and the 
semigroups $W(\LL(A))$ and $W(\EE(A))$. Part (c) of Theorem
\ref{t5.7} says that they coincide.

\begin{theorem}\label{t5.7}
\cite[1.3.9, 1.3.11]{DTZ62}

(a) Let $L_1,L_2\in\LL(A)$. The following four properties
are equivalent.
\begin{list}{}{}
\item[(i)]
$L_1\sim_w L_2$.
\item[(ii)]
$1\in (L_1:L_2)(L_2:L_1)$.
\item[(iii)]
$\OO(L_1)=\OO(L_2)$ and $L_3\in G(\OO(L_1))$ with
$L_1L_3=L_2$ exists.
\item[(iv)]
$[L_1]_\varepsilon\sim_w[L_2]_\varepsilon$.
\end{list}

(b) Let $L_1,L_2\in\LL(A)$ with $L_1\sim_w L_2$. We have
\begin{eqnarray*}
L_1:L_2\in G(\OO(L_1)),\quad L_2:L_1\in G(\OO(L_1)),
\quad (L_1:L_2)^{-1}=L_2:L_1,\\
L_2=(L_2:L_1)L_1,\quad L_1=(L_1:L_2)L_2,\\
\OO(L_1)=\OO(L_2)=\OO(L_1:L_2)=\OO(L_2:L_1)
=(L_1:L_2)(L_2:L_1).
\end{eqnarray*}

(c) $W(\LL(A))=W(\EE(A))$, and this semigroup
inherits a division map from the division map on $\LL(A)$. 
\end{theorem}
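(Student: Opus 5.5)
We prove part (a) by the cycle (i)$\Rightarrow$(ii)$\Rightarrow$(iii)$\Rightarrow$(i), and then show (i)$\iff$(iv) separately. Part (b) is read off from (ii) and (iii), and part (c) follows from (a) together with Lemma \ref{t5.5}.

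\textbf{(i)$\Rightarrow$(ii).} If $L_1=L_2$, then $1\in \OO(L_1)\OO(L_1)$. Otherwise there are $x_1,x_2$ with $L_1x_1=L_2$ and $L_2x_2=L_1$. Then $x_1\subset L_2:L_1$ and $x_2\subset L_1:L_2$. Hence
$$L_1=L_1x_1x_2\subset L_1(L_2:L_1)(L_1:L_2)\subset L_1 .$$
So $L_1\cdot M=L_1$ with $M:=(L_1:L_2)(L_2:L_1)$. Moreover $M\subset \OO(L_1)$, because $(L_1:L_2)(L_2:L_1)L_1\subset (L_1:L_2)L_2\subset L_1$. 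Thus $M\cdot M\subset M$: indeed $M^2\subset M\cdot\OO(L_1)$, and $M$ is an $\OO(L_1)$-module by \eqref{5.3}. Krull's lemma, Lemma \ref{t5.2}(c), then gives $1\in M$.

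\textbf{(ii)$\Rightarrow$(iii).} Assume $1\in M$. Then $\OO(L_1)\subset M\subset\OO(L_1)$, so $M=\OO(L_1)$. By symmetry also $M=\OO(L_2)$, hence $\OO(L_1)=\OO(L_2)=:\Lambda$. Set $L_3:=L_2:L_1$. Then $L_3\cdot(L_1:L_2)=\Lambda$. By Theorem \ref{t5.6}(c)(ii), and since $\OO(L_3)\supset\Lambda$ by \eqref{5.3}, this shows that $L_3$ is invertible. We also need $\OO(L_3)=\Lambda$, which is the point requiring care. The argument: $\OO(L_3)L_3(L_1:L_2)=\Lambda$, so $\OO(L_3)\subset\Lambda$ because $\Lambda$ is an order containing $1$. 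Hence $L_3\in G(\Lambda)$. Finally,
$$L_1L_3\subset L_2=\Lambda L_2=(L_2:L_1)(L_1:L_2)L_2\subset L_3L_1,$$
so $L_1L_3=L_2$.

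\textbf{(iii)$\Rightarrow$(i).} We have $L_2L_3^{-1}=L_1\Lambda=L_1$, which is exactly the $w$-equivalence.

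\textbf{(i)$\Rightarrow$(iv)} is immediate, since $w$-equivalence passes to the quotient.

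\textbf{(iv)$\Rightarrow$(i).} Suppose $[L_1]_\varepsilon x_1=[L_2]_\varepsilon$ and $[L_2]_\varepsilon x_2=[L_1]_\varepsilon$. Lifting the classes $x_1,x_2$ to lattices and absorbing the unit factors $a\in A^{unit}$ into them, we get lattices $X_1:=aY_1$ and $X_2$ with $L_1X_1=L_2$ and $L_2X_2=L_1$. The case of equal classes needs a separate treatment: if $L_2=aL_1$, take $x_1=a\OO(L_1)$ and $x_2=a^{-1}\OO(L_1)$.

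\textbf{Part (b).} From the proof of (ii)$\Rightarrow$(iii) we have $(L_1:L_2)(L_2:L_1)=\Lambda$, $L_2=(L_2:L_1)L_1$, and by symmetry $L_1=(L_1:L_2)L_2$. The same argument as for $L_3$ shows $\OO(L_1:L_2)=\Lambda$ and that $L_1:L_2\in G(\Lambda)$. The two quotients are mutually inverse by the uniqueness in Theorem \ref{t4.2}(a).

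\textbf{Part (c).} The map $W(\LL(A))\to W(\EE(A))$ is well defined and bijective by (i)$\iff$(iv), and it respects multiplication. For the division map, suppose $L_1\sim_w L_1'$ and $L_2\sim_w L_2'$. Write $L_1'=L_1G_1$ and $L_2'=L_2G_2$ with invertible $G_i$, using (iii). Then $L_1':L_2'\supset (L_1:L_2)G_1G_2^{-1}$, with the reverse inclusion by symmetry, using \eqref{5.2}. Thus $L_1':L_2'$ and $L_1:L_2$ are $w$-equivalent via multiplication by $G_1G_2^{-1}$ and its inverse. The main obstacle is this division compatibility: one needs $(L_1G_1):(L_2G_2)=(L_1:L_2)G_1G_2^{-1}$ exactly, which should follow from \eqref{5.4} and \eqref{5.5} together with $G_i\Lambda_i$-absorption.
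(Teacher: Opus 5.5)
Your proof is correct and follows the paper's route throughout. Krull's lemma on $(L_1:L_2)(L_2:L_1)$ gives (i)$\Rightarrow$(ii); identifying this product with $\OO(L_1)$ and $\OO(L_2)$ gives (ii)$\Rightarrow$(iii) and (b); absorbing the unit factors gives (iv)$\Rightarrow$(i); and in (c) your identity $L_1':L_2'=(L_1:L_2)G_1G_2^{-1}$ is exactly the paper's $L_3:L_4=(L_5:L_6)(L_3:L_5)(L_6:L_4)$, since $G_1=L_1':L_1$ and $G_2^{-1}=L_2:L_2'$.

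Two citations should be tidied. The step you flag as the main obstacle needs only $G_2^{-1}G_2=\OO(L_2)$ and the absorptions $(L_1:L_2)\OO(L_2)=L_1:L_2$ and $(L_1':L_2')\OO(L_1)\OO(L_2)=L_1':L_2'$ from \eqref{5.3}; it does not use \eqref{5.4}/\eqref{5.5}, which require $L_1$ or $L_2$ to be invertible. Likewise, in (ii)$\Rightarrow$(iii) the invertibility of $L_3$ follows directly from Definition \ref{t4.1}(b) with $b=L_1:L_2$ and $c=\Lambda$, not from Theorem \ref{t5.6}(c)(ii).
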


{\bf Proof:}
(a) (i)$\Rightarrow$(ii): 
Suppose $L_1\sim_w L_2$. Then $L_3,L_4\in\LL(A)$ with
$L_1L_3=L_2$ and $L_2L_4=L_1$ exist. Especially
\begin{eqnarray*}
L_1(L_2:L_1)=L_2\quad\textup{and}\quad L_2(L_1:L_2)=L_1
\end{eqnarray*}
(and not just $L_1(L_2:L_1)\subset L_2$ and 
$L_2(L_1:L_2)\subset L_1$). 
Lemma \ref{t5.2} (c) (Krull's lemma) can be applied 
and yields $1\in (L_1:L_2)(L_2:L_1)$, because of the following
two calculations,
\begin{eqnarray*}
&&L_2(L_1:L_2)(L_2:L_1)=L_1(L_2:L_1)=L_2,\\
&&\bigl((L_1:L_2)(L_2:L_1)\bigr)^2 =
(L_1:L_2)(L_2:L_1)(L_1:L_2)(L_2:L_1)\\
&\subset& (L_1:L_2)(L_2:L_1)(L_1:L_1)
=(L_1:L_2)(L_2:L_1)\OO(L_1)\\
&\stackrel{\eqref{5.3}}{=}& (L_1:L_2)(L_2:L_1).
\end{eqnarray*}

(ii)$\Rightarrow$(iii): Define $\Lambda:=(L_1:L_2)(L_2:L_1)$.
The last calculation $\Lambda\Lambda\subset \Lambda$
and the assumption $1\in\Lambda$ in (ii) 
show that $\Lambda$ is an order. 
It contains $\OO(L_1)$ and $\OO(L_2)$ because of \eqref{5.3}. 
It is contained in $L_1:L_1=\OO(L_1)$
and in $L_2:L_2=\OO(L_2)$ by definition of $L_1:L_2$
and $L_2:L_1$. Therefore $\OO(L_1)=\Lambda=\OO(L_2)$.
This is also equal to $\OO(L_1:L_2)$ and $\OO(L_2:L_1)$,
because \eqref{5.3} implies both of the following 
inclusions,
\begin{eqnarray*}
\OO(L_1)\subset \OO(L_1:L_2)\subset \OO(\Lambda)=\Lambda.
\end{eqnarray*}
With $\Lambda=(L_1:L_2)(L_2:L_1)$ we obtain 
$L_1:L_2\in G(\Lambda)$, $L_2:L_1\in G(\Lambda)$ and
$(L_1:L_2)^{-1}=L_2:L_1$. Now $L_3:=L_2:L_1\in G(\Lambda)$
satisfies 
\begin{eqnarray*}
L_2=L_2\Lambda\subset L_1(L_2:L_1)\subset L_2,
\quad\textup{so }L_2=L_1(L_2:L_1)=L_1L_3.
\end{eqnarray*}
 
(iii)$\Rightarrow$(i): 
$L_1L_3=L_2$ and $L_2L_3^{-1}=L_1L_3L_3^{-1}=L_1\OO(L_1)=L_1$
show $L_1\sim_w L_2$.

(i)$\Rightarrow$(iv): Trivial.

(iv)$\Rightarrow$(i): Suppose 
$[L_1]_\varepsilon[L_3]_\varepsilon=[L_2]_\varepsilon$ and
$[L_2]_\varepsilon[L_4]_\varepsilon=[L_1]_\varepsilon$.
Then $a_1,a_2\in A^{unit}$ with 
$a_1L_1L_3=L_2$ and $a_2L_2L_4=L_1$ exist. Thus
$L_1\sim_w L_2$.

(b) This was shown in the proof of (ii)$\Rightarrow$(iii)
in part (a).

(c) Suppose $L_1\sim_\varepsilon L_2$. Then $\OO(L_1)=\OO(L_2)$
by Lemma \ref{t5.5}, and $a\in A^{unit}$ with $aL_1=L_2$ exists. Then $L_1a\OO(L_1)=L_2$ and $L_2a^{-1}\OO(L_1)=L_1$, so 
$L_1\sim_w L_2$. Therefore the sets $W(\LL(A))$ and
$W(\EE(A))$ coincide. Also the multiplications coincide,
because all multiplications are induced from the multiplication
on $\LL(A)$. Finally, we want to show 
\begin{eqnarray}\label{5.9}
L_3\sim_w L_5,\ L_4\sim_w L_6\Rightarrow 
L_3:L_4\sim_w L_5:L_6.
\end{eqnarray}
We calculate 
\begin{eqnarray*}
L_3:L_4&\supset& (L_3:L_5)(L_5:L_6)(L_6:L_4)\\
&\supset& (L_3:L_5)(L_5:L_3)(L_3:L_4)(L_4:L_6)(L_6:L_4)\\
&\stackrel{\textup{(b)}}{=}&
\OO(L_3)(L_3:L_4)\OO(L_4)
\stackrel{\textup{\eqref{5.3}}}{=}(L_3:L_4),\\
\textup{so }L_3:L_4&=&(L_5:L_6)\bigl((L_3:L_5)(L_6:L_4)\bigr),
\end{eqnarray*}
and analogously 
\begin{eqnarray*}
L_5:L_6&=&(L_3:L_4)\bigl((L_5:L_3)(L_4:L_6)\bigr).
\end{eqnarray*}
Therefore $L_3:L_4\sim_w L_5:L_6$.\hfill$\Box$

\bigskip
By definition, $L_1\sim_w L_2$ if and only
if $L_3$ and $L_4\in\LL(A)$ exist with 
$L_1L_3=L_2$ and $L_2L_4=L_1$. 
Theorem \ref{t5.7} (a) (i)$\iff$(iii) and (b) gives a remarkable
strengthening of this. It says that one can choose
$L_3$ and $L_4$ in $G(\OO(L_1))$, namely $L_3=L_2:L_1$
and $L_4=L_1:L_2=L_3^{-1}$. 

In fact, $L_3$ and $L_4$ with this additional condition
$L_3,L_4\in G(\OO(L_1))$ are unique, see part (a)
of the next Theorem \ref{t5.8}. 
The proof of this is easy. But it has the important
consequence, which is formulated in part (b) of Theorem 
\ref{t5.8} that for any $L\in\LL(A)$ there are bijections
$[\OO(L)]_w\to [L]_w$ and 
$[[\OO(L)]_\varepsilon]_w\to [[L]_\varepsilon]_w$.
Theorem \ref{t5.8} is not stated in \cite{DTZ62}.

Consider an order $\Lambda$ and the set
\begin{eqnarray}\label{5.10}
\{[L]_\varepsilon\,|\, L\in\LL(A),\OO(L)=\Lambda\}.
\end{eqnarray}
Theorem \ref{t6.5} will say that this set is finite.
Then \eqref{5.13} in Theorem \ref{t5.8} says that this set 
decomposes into finitely many $w$-classes which have all 
the same size, one of them being $G([\Lambda]_\varepsilon)$. 
The Theorems \ref{t6.5} and \ref{t5.8} (b)
together structure $\EE(A)$ in a good way.

\begin{theorem}\label{t5.8}
(a) Let $L_1,L_2\in\LL(A)$ with $L_1\sim_w L_2$.
Then $\OO(L_1)=\OO(L_2)$. The only full lattice $L_3$ with
$L_1L_3=L_2$ and $L_3\in G(\OO(L_1))$ is $L_3=L_2:L_1$. 

(b) Let $\Lambda$ be an order and $L_1\in \LL(A)$
with $\OO(L_1)=\Lambda$. Then 
\begin{eqnarray}
G(\Lambda)=[\Lambda]_w\quad\textup{and}\quad 
G([\Lambda]_\varepsilon)=[[\Lambda]_\varepsilon]_w.\label{5.11}
\end{eqnarray}
The maps 
\begin{eqnarray}
G(\Lambda)\to [L_1]_w,&& L_3\mapsto L_3L_1,\label{5.12}\\
G([\Lambda]_\varepsilon)\to [[L_1]_\varepsilon]_w,
&& [L_3]_\varepsilon\mapsto [L_3]_\varepsilon[L_1]_\varepsilon,
\label{5.13}
\end{eqnarray}
are well defined and bijections. 
\end{theorem}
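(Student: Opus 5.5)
For part (a), Theorem \ref{t5.7} (a) (i)$\Rightarrow$(iii) and (b) already give $\OO(L_1)=\OO(L_2)$, $L_2:L_1\in G(\OO(L_1))$ and $L_1(L_2:L_1)=L_2$. So only uniqueness remains. Write $\Lambda:=\OO(L_1)$ and suppose $L_3\in G(\Lambda)$ satisfies $L_1L_3=L_2$. Then $L_3\subset L_2:L_1$ by the definition of the quotient. For the reverse inclusion, multiply by $L_3^{-1}$, which exists in $G(\Lambda)$ with $L_3L_3^{-1}=\Lambda$. This gives
$$L_3^{-1}(L_2:L_1)L_1\subset L_3^{-1}L_2=L_3^{-1}L_3L_1=\Lambda L_1=L_1,$$
so $L_3^{-1}(L_2:L_1)\subset L_1:L_1=\Lambda$. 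Multiplying by $L_3$ yields $\Lambda(L_2:L_1)\subset L_3\Lambda=L_3$. Since $1_A\in\Lambda$, we get $L_2:L_1\subset L_3$, and hence equality.

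For part (b), \eqref{5.11} is immediate from Theorem \ref{t4.4} (b) applied to the idempotents $\Lambda\in\LL(A)$ and $[\Lambda]_\varepsilon\in\EE(A)$, since $e_\Lambda=\Lambda$ by Theorem \ref{t4.2} (b).

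For the map \eqref{5.12}, take $L_3\in G(\Lambda)$. Then $L_3L_1\sim_w L_1$, because $L_1\cdot L_3=L_3L_1$ and $(L_3L_1)L_3^{-1}=\Lambda L_1=L_1$; so the map is well defined.

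Surjectivity follows from Theorem \ref{t5.7} (a) (i)$\Rightarrow$(iii): every $L_2\sim_w L_1$ equals $L_1L_3$ for some $L_3\in G(\OO(L_1))=G(\Lambda)$.

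Injectivity follows from part (a). If $L_3L_1=L_4L_1=:L_2$ with $L_3,L_4\in G(\Lambda)$, then both $L_3$ and $L_4$ equal $L_2:L_1$. Alternatively, multiply $L_3L_1=L_4L_1$ by $\Lambda:L_1$; but since $L_1$ need not be invertible, the route through part (a) is the one I would use.

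For \eqref{5.13} I would repeat the argument in $\EE(A)$. Well-definedness and surjectivity transfer directly. Surjectivity uses Theorem \ref{t5.7} (a) (iv)$\Rightarrow$(i)$\Rightarrow$(iii) together with the fact that the representatives satisfy $\OO(L)=\Lambda$ for every $[L]_\varepsilon$ in the $w$-class. That fact holds because $w$-equivalent lattices have the same order by part (a) and Lemma \ref{t5.5}.

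I expect injectivity for \eqref{5.13} to be the main obstacle, because it must absorb units. Suppose $[L_3L_1]_\varepsilon=[L_4L_1]_\varepsilon$ with $L_3,L_4\in G(\Lambda)$, where $G(\Lambda)$ surjects onto $G([\Lambda]_\varepsilon)$ by Theorem \ref{t5.6} (c). Then $aL_3L_1=L_4L_1$ for some $a\in A^{unit}$. Now $aL_3\in G(\Lambda)$, since $(aL_3)(a^{-1}L_3^{-1})=\Lambda$. Part (a), applied to $L_1\sim_w L_4L_1$, then forces $aL_3=L_4$, so $[L_3]_\varepsilon=[L_4]_\varepsilon$.

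The one point needing care is that elements of $G([\Lambda]_\varepsilon)$ are represented by lattices in $G(\Lambda)$. This holds because a class that is invertible with idempotent $[\Lambda]_\varepsilon$ has representatives $L$ with $\OO(L)=\Lambda$ that are invertible in $\LL(A)$, by Theorem \ref{t5.6} (c).
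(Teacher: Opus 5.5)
Your proof is correct and follows essentially the paper's route: Theorem \ref{t5.7} gives existence, surjectivity and equality of orders, part (a) gives injectivity of \eqref{5.12}, and Theorem \ref{t4.4} (b) gives \eqref{5.11}. The differences are minor: you prove uniqueness in (a) by a direct computation with $L_3^{-1}$ instead of via $(L_1:L_2)^{-1}=L_2:L_1$ from Theorem \ref{t5.7} (b), and you spell out the unit-absorbing step $aL_3\in G(\Lambda)$ behind the paper's terse claim that \eqref{5.13} is bijective because \eqref{5.12} is and respects $\varepsilon$-classes (there you also tacitly use Lemma \ref{t5.5}, so that $[\OO(L)]_\varepsilon=[\Lambda]_\varepsilon$ forces $\OO(L)=\Lambda$).
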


{\bf Proof:} 
(a) Theorem \ref{t5.7} (a) (i)$\Rightarrow$(iii) gives
$\OO(L_1)=\OO(L_2)$. Suppose $L_3\in G(\OO(L_1))$ with
$L_1L_3=L_2$. Then $L_3\subset L_2:L_1$.
Also $L_1=L_2L_3^{-1}$ which implies $L_3^{-1}\subset L_1:L_2$.
This last inclusion implies
$$L_3\supset (L_1:L_2)^{-1} 
\stackrel{\textup{Theorem \ref{t5.7} (b)}}{=}L_2:L_1.$$
Therefore $L_3=L_2:L_1$. 

(b) \eqref{5.11} follows from Theorem \ref{t4.4} (b).

The map in \eqref{5.12} has indeed image in $[L_1]_w$ because
for $L_3\in G(\Lambda)$
\begin{eqnarray*}
L_1\cdot L_3=L_3L_1,\quad 
(L_3L_1)\cdot L_3^{-1}=L_1,
\quad\textup{so }L_3L_1\sim_w L_1.
\end{eqnarray*}
It is surjective because of (i)$\Rightarrow$(iii) in Theorem
\ref{t5.7} (a). It is injective because of part (a). 

The map in \eqref{5.13} is bijective because the map in 
\eqref{5.12} is bijective and because it respects 
$\varepsilon$-classes.
\hfill$\Box$

\section{An extension of the Jordan-Zassenhaus theorem}\label{s6}
\setcounter{equation}{0}
\setcounter{table}{0}

Throughout the whole paper $A$ is as in Theorem \ref{t3.1}, 
so $A$ a finite dimensional commutative $\Q$-algebra 
with unit element $1_A$.
It is {\it separable} if its radical $R$ is $0$.
Then $A$ is a direct sum of algebraic number fields.
The following theorem
collects some classical results from algebraic number theory.

\begin{theorem}\label{t6.1}
(E.g. \cite{Ne99} or \cite{BSh73})

Let $A$ be an algebraic number field.

(a) It has a maximal order $\Lambda_{max}(A)$, which contains 
all other orders, namely the set of algebraic integers in $A$.

(b) Each full lattice $L$ with $\OO(L)=\Lambda_{max}(A)$
is invertible.

(c) The group 
$$G([\Lambda_{max}]_\varepsilon)=
\{[L]_\varepsilon\,|\, L\textup{ is a full lattice with }
\OO(L)=\Lambda_{max}(A)\}$$
is finite. It is the class group of $A$.
\end{theorem}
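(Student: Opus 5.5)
We plan to prove (a) and (c) by the standard arguments from algebraic number theory, and to deduce (b) from the machinery of section \ref{s5}.

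For (a) we take $\Lambda_{max}(A)$ to be the set of algebraic integers in $A$, i.e.\ the elements whose minimal polynomial over $\Q$ lies in $\Z[t]$.

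\emph{Every order consists of integers.} If $a\in\Lambda$ for an order $\Lambda$, then $\mu_a$ preserves the finitely generated $\Z$-module $\Lambda$. So the characteristic polynomial of $\mu_a$ is monic in $\Z[t]$, and by Cayley--Hamilton $a$ is integral.

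\emph{The integers form an order.} The same criterion (an element is integral iff it lies in a ring that is a finitely generated $\Z$-module) shows that sums and products of integers are integers. To see that the integers form a full lattice, we choose a $\Q$-basis of $A$ consisting of integers, which is possible after scaling. Consider its dual basis $w_1^\vee,\dots,w_n^\vee$ with respect to the nondegenerate trace form $(x,y)\mapsto \mathrm{Tr}_{A/\Q}(xy)$. Traces of integers are in $\Z$, so every integer lies in $\sum_i\Z w_i^\vee$. Hence the integers form a full lattice, and by the first step this order contains all other orders.

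For (b) we avoid Dedekind theory and use Theorem \ref{t5.6} (c). Let $L$ be a full lattice with $\OO(L)=\Lambda_{max}$. By \eqref{5.3}, $\OO(\Lambda_{max}:L)\supset\Lambda_{max}$. By Lemma \ref{t5.2} (a) this is an order, so by (a) it is contained in $\Lambda_{max}$. Hence $\OO(\OO(L):L)=\OO(L)$, which is condition (v), and (v)$\Rightarrow$(i) gives that $L$ is invertible with $e_L=\Lambda_{max}$. In particular $G([\Lambda_{max}]_\varepsilon)$ equals the set displayed in (c).

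For the finiteness in (c) we will use the classical Minkowski/pigeonhole argument. Write $\OO:=\Lambda_{max}$ with $\Z$-basis $w_1,\dots,w_n$, and set $C:=\prod_j\sum_i|\sigma_j(w_i)|$ over the embeddings $\sigma_j:A\to\C$. Then every $x=\sum x_iw_i$ with $|x_i|\le m$ has $|N(x)|\le Cm^n$.

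\emph{Reduction to integral ideals.} Each class contains some $L\subset\OO$ after multiplying by a suitable integer. Put $N:=[\OO:L]$ and choose $m$ with $m^n\le N<(m+1)^n$.

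\emph{Pigeonhole.} Among the $(m+1)^n$ elements $\sum x_iw_i$ with $0\le x_i\le m$, two are congruent mod $L$. Their difference is some $0\ne a\in L$ with $|N(a)|\le CN$.

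\emph{Bounded index.} By (b), $L':=aL^{-1}$ is a full lattice, and $aL^{-1}\subset LL^{-1}=\OO$. It lies in the same class as $L^{-1}$ and satisfies $[\OO:L']=|N(a)|/[\OO:L]\le C$.

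Since inversion is a bijection of the group, Lemma \ref{t2.2} (d) then gives finiteness.

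The step I expect to be the main obstacle is the index identity $[\OO:aL^{-1}]=|N(a)|/[\OO:L]$. It requires $[\OO:aO]=|N(a)|$, which is the determinant of $\mu_a$ and so routine. It also requires multiplicativity of the ideal norm $[\OO:IJ]=[\OO:I][\OO:J]$ for invertible ideals $I,J\subset\OO$. I would first try to prove this multiplicativity via unique factorization into prime ideals, which follows from the invertibility in (b) together with Noetherianity. The fallback is localization at primes, where $\OO_{\mathfrak p}$ is a DVR and indices are computed by valuations.
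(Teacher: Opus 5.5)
The paper gives no proof of this theorem; it quotes it from Neukirch or Borevich--Shafarevich. In Neukirch, for instance, (b) comes from Dedekind theory (prime ideal factorization in $\Lambda_{max}$) and the finiteness in (c) from Minkowski's lattice point theorem. Your proposal is correct and takes a different route.

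Part (a) is the standard trace-dual-basis argument. For (b) you combine \eqref{5.3} with maximality to get $\OO(\Lambda_{max}:L)=\Lambda_{max}=\OO(L)$. You then invoke (v)$\Rightarrow$(i) of Theorem \ref{t5.6} (c), whose proof is a Krull-type determinant argument on the dual lattice and uses nothing about number fields. This gives invertibility inside the paper's own framework, with no Dedekind theory and no circularity. For (c) your pigeonhole argument replaces Minkowski's convex body theorem; the constant is worse, which does not matter for finiteness.

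However, the step you flag as the main obstacle is self-inflicted. Your plan for it, multiplicativity of the ideal norm via prime factorization or DVR localizations, brings back exactly the Dedekind theory that your proof of (b) avoided. Use $a^{-1}L$ instead of $aL^{-1}$. Since $a\in L$ and $\Lambda_{max}L=L$, you have $a\Lambda_{max}\subset L\subset\Lambda_{max}$, hence
\[
[a^{-1}L:\Lambda_{max}]=[L:a\Lambda_{max}]=\frac{[\Lambda_{max}:a\Lambda_{max}]}{[\Lambda_{max}:L]}=\frac{|N(a)|}{N}\le C,
\]
using only $[\Lambda_{max}:a\Lambda_{max}]=|\det\mu_a|$. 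So every class contains a full lattice $K\supset\Lambda_{max}$ with $[K:\Lambda_{max}]\le C$, and Lemma \ref{t2.2} (d) finishes. This needs neither (b), nor inverses, nor norm multiplicativity. It also works verbatim for any order $\Lambda$ and any $\Lambda$-ideal $L$, with $C$ built from a $\Z$-basis of $\Lambda$, so it proves Theorem \ref{t6.3} for a number field as well.

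Finally, the identification with the class group, which you leave implicit, is immediate. The fractional ideals of $\Lambda_{max}$ are the full lattices $L$ with $\OO(L)\supset\Lambda_{max}$, i.e.\ $\OO(L)=\Lambda_{max}$ by (a), and the principal ones are the $a\Lambda_{max}$ with $a\in A^{unit}=A-\{0\}$.
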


\begin{corollary}\label{t6.2}
Let $A$ be separable, so $A=\bigoplus_{j=1}^k A^{(j)}$ with 
$A^{(1)},...,A^{(k)}$ algebraic number fields.
Then Theorem \ref{t6.1} holds also in this situation. 
More precisely, the following holds.

(a) $A$ has a maximal order $\Lambda_{max}(A)$, which contains
all other orders. It is
$\Lambda_{max}(A)=\bigoplus_{j=1}^k\Lambda_{max}(A^{(j)})$.

(b) Each full lattice $L$ with $\OO(L)=\Lambda_{max}(A)$ is a 
direct sum $L=\bigoplus_{j=1}^kL^{(j)}$ where $L^{(j)}$ is a 
full lattice in $A^{(j)}$ with 
$\OO(L^{(j)})=\Lambda_{max}(A^{(j)})$.
$L$ is invertible with inverse
$L^{-1}=\bigoplus_{j=1}^k (L^{(j)})^{-1}$.

(c) The group $G([\Lambda_{max}]_\varepsilon)$ is finite and
isomorphic to the product 
$\prod_{j=1}^k G([\Lambda_{max}(A^{(j)})]_\varepsilon)$ of the
class groups of the algebraic numbers fields
$A^{(1)},...,A^{(k)}$.
\end{corollary}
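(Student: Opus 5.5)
The plan is to reduce everything to the field components via the idempotents $1_{A^{(j)}}$ and then quote Theorem \ref{t6.1} for each algebraic number field $A^{(j)}$. Since $R=0$, Theorem \ref{t3.1} gives $A=\bigoplus_j A^{(j)}$ with each $A^{(j)}=F^{(j)}$ a field. The idempotents satisfy $e_j:=1_{A^{(j)}}\in A$ and $\sum_j e_j=1_A$.

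\emph{Part (a).} Put $\Lambda_{max}(A):=\bigoplus_j\Lambda_{max}(A^{(j)})$. This is a full lattice, contains $1_A=\sum_j e_j$, and is closed under multiplication because $A^{(i)}A^{(j)}=0$ for $i\neq j$. So it is an order. Now let $\Lambda$ be any order. Every $a\in\Lambda$ satisfies a monic integer polynomial, namely its characteristic polynomial on a $\Z$-basis of $\Lambda$, using $a\Lambda\subset\Lambda$. Hence every component $e_ja$ is integral over $\Z$: apply the ring homomorphism $A\to A^{(j)}$, $x\mapsto e_jx$, to that polynomial relation. So $e_ja\in\Lambda_{max}(A^{(j)})$ by Theorem \ref{t6.1}(a), and therefore $a=\sum_j e_ja\in\Lambda_{max}(A)$.

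\emph{Part (b).} Let $\OO(L)=\Lambda_{max}(A)$. Then $e_j\in\OO(L)$, so $e_jL\subset L$. Consequently $L=\sum_j e_jL=\bigoplus_j L^{(j)}$ with $L^{(j)}:=e_jL$, and each $L^{(j)}$ is a full lattice in $A^{(j)}$ since $L$ spans $A$. Next I would check componentwise that
\[
L:L=\bigoplus_j\bigl(L^{(j)}:L^{(j)}\bigr),
\]
where the quotient on the right is taken inside $A^{(j)}$. Indeed, for $a=\sum_j a_j$ we have $aL\subset L$ if and only if $a_jL^{(j)}\subset L^{(j)}$ for all $j$, because multiplication is componentwise. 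Comparing with $\Lambda_{max}(A)$ gives $\OO(L^{(j)})=\Lambda_{max}(A^{(j)})$. By Theorem \ref{t6.1}(b) each $L^{(j)}$ is invertible, with $L^{(j)}(L^{(j)})^{-1}=\Lambda_{max}(A^{(j)})$. Multiplying the direct sums, with cross terms vanishing, gives $L\cdot\bigoplus_j(L^{(j)})^{-1}=\Lambda_{max}(A)=\OO(L)$. Theorem \ref{t5.6}(c) (ii)$\Rightarrow$(i) then shows that $L$ is invertible with the stated inverse.

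\emph{Part (c).} Use the map $[L]_\varepsilon\mapsto([e_jL]_\varepsilon)_j$ into $\prod_j G([\Lambda_{max}(A^{(j)})]_\varepsilon)$. Well-definedness and injectivity rest on $A^{unit}=\prod_j(A^{(j)})^{unit}$ from Remark \ref{t3.2}(iii). If $aL=L'$ with $a$ a unit, then $(e_ja)(e_jL)=e_jL'$. Conversely, units $a_j$ with $a_jL^{(j)}=L'^{(j)}$ assemble into the unit $\sum_j a_j$, which maps $L$ to $L'$.

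Surjectivity holds because any tuple of exact $\Lambda_{max}(A^{(j)})$-ideals has a direct sum whose order is $\Lambda_{max}(A)$, by the componentwise formula above. The map is a homomorphism since $e_j(LL')=(e_jL)(e_jL')$. Using part (b) and Theorem \ref{t5.6}(c), the group $G([\Lambda_{max}]_\varepsilon)$ equals the set of classes with order $\Lambda_{max}(A)$. Finiteness then follows from Theorem \ref{t6.1}(c).

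I expect the main obstacle to be the maximality in (a), specifically the integrality of the components. The argument above handles it by projecting the integral equation onto each $A^{(j)}$.
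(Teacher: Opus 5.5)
Your proposal is correct and follows essentially the same route as the paper: you decompose via the idempotents $1_{A^{(j)}}$ and apply Theorem \ref{t6.1} componentwise, and in (c) you simply spell out the isomorphism that the paper leaves implicit. The only minor variation is in (a): the paper observes that $1_{A^{(j)}}\Lambda$ is itself an order in $A^{(j)}$ and therefore lies in $\Lambda_{max}(A^{(j)})$, whereas you argue elementwise through the integrality of $1_{A^{(j)}}a$; both rest equally on Theorem \ref{t6.1}(a).
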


{\bf Proof:}
(a) If $\Lambda$ is an order in $A$, then 
$1_{A^{(j)}}\cdot \Lambda$ is an order in $A^{(j)}$.
Then $\Lambda\subset\bigoplus_{j=1}^k1_{A^{(j)}}\cdot\Lambda
\subset\bigoplus_{j=1}^k \Lambda_{max}(A^{(j)})
=\Lambda_{max}.$

(b) $1_{A^{(j)}}\in \Lambda_{max}(A^{(j)})
\subset\Lambda_{max}(A)$. Therefore $L$ contains 
$L^{(j)}:=1_{A^{(j)}}\cdot L$ and is the direct sum
$L=\bigoplus_{j=1}^kL^{(j)}$. Also
$$\OO(L^{(j)})=1_{A^{(j)}}\cdot\OO(L)
=1_{A^{(j)}}\cdot \Lambda_{max}(A)
\stackrel{\textup{(a)}}{=}\Lambda_{max}(A^{(j)}).$$
By part (b) of Theorem \ref{t6.1} $L^{(j)}$ is invertible in
$A^{(j)}$. The rest is clear.

(c) This follows from Theorem \ref{t6.1} (c) and from 
part (b).\hfill$\Box$

\bigskip
The finiteness of the class group in Theorem \ref{t6.1} (c) 
is a very special case of the Jordan-Zassenhaus 
theorem \cite{Za38} (see also \cite[Theorem (79.1)]{CR62}
or \cite[Theorem (26.4)]{Re03}). This theorem holds also
for noncommutative separable algebras. In our situation
it implies the following finiteness result.

\begin{theorem}\label{t6.3}
(Special case of the Jordan-Zassenhaus theorem)

Let $A$ be separable.
For any order $\Lambda$ in $A$ the set
$$\{[L]_\varepsilon\,|\, L\in\LL(A),\OO(L)\supset \Lambda\}$$
of $\varepsilon$-classes of $\Lambda$-ideals is finite.
\end{theorem}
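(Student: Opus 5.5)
The plan is to reduce the claim to finiteness of the class group of the maximal order $\Lambda_{max}:=\Lambda_{max}(A)$ from Corollary \ref{t6.2}, together with the finiteness statements for lattices in Lemma \ref{t2.2}. Since $\Lambda_{max}$ contains every order, there is $m\in\N$ with $m\Lambda_{max}\subset\Lambda$ (Lemma \ref{t2.2} (c)); fix such an $m$. Let $L$ be a $\Lambda$-ideal, i.e.\ $\OO(L)\supset\Lambda$. Put $M:=\Lambda_{max}L$. Then $M$ is a full lattice with $\OO(M)\supset\Lambda_{max}$, so $\OO(M)=\Lambda_{max}$ by maximality. By Corollary \ref{t6.2} (c), $[M]_\varepsilon$ lies in a finite set. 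Hence, after replacing $L$ by $aL$ for a suitable $a\in A^{unit}$ (which does not change $\OO(L)$), we may assume that $M$ belongs to a fixed finite list $M_1,\dots,M_r$ of representatives.

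Next we sandwich $L$. We have $L\subset\Lambda_{max}L=M$, and
$$mM=(m\Lambda_{max})L\subset\Lambda L=L.$$
So $mM\subset L\subset M$. By Lemma \ref{t2.2} (d) (or directly, since $[M:mM]=m^{\dim A}$ is finite), for each fixed $M_i$ there are only finitely many full lattices $L$ with $mM_i\subset L\subset M_i$. Therefore every $\Lambda$-ideal is $\varepsilon$-equivalent to one of finitely many lattices, which proves the claim.

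The main obstacle is the input from Corollary \ref{t6.2} (c): finiteness of the class group of $\Lambda_{max}$. This rests on the classical finiteness of class numbers of algebraic number fields (Theorem \ref{t6.1}), and we simply invoke it. A minor point to check is that $\varepsilon$-equivalence uses units of $A$, which act compatibly with $L\mapsto\Lambda_{max}L$: indeed $\Lambda_{max}(aL)=a\Lambda_{max}L$. The choice of $m$ is uniform in $L$ because it depends only on $\Lambda$.
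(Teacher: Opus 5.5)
Your proof is correct. It takes a genuinely different route from the paper, which gives no argument of its own for Theorem \ref{t6.3} and simply quotes it as a special case of the general Jordan--Zassenhaus theorem. That theorem holds for arbitrary, including noncommutative, separable $\Q$-algebras and for lattices in arbitrary modules over orders.

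You instead reduce the statement directly to finiteness of the class group of $\Lambda_{max}(A)$ (Theorem \ref{t6.1} and Corollary \ref{t6.2}). The map $L\mapsto M=\Lambda_{max}L$ lands in the classes with order exactly $\Lambda_{max}$. That set is finite because, by Corollary \ref{t6.2} (b) and (c), all such lattices are invertible and their classes form the finite group $G([\Lambda_{max}]_\varepsilon)$. Then the sandwich $mM\subset L\subset M$, with $m$ depending only on $\Lambda$, leaves only finitely many choices of $L$. The individual steps all check out:
\begin{itemize}
\item $\OO(\Lambda_{max}L)=\Lambda_{max}$, because $\Lambda_{max}$ is the largest order.
\item For $a\in A^{unit}$ one has $\OO(aL)=\OO(L)$ and $\Lambda_{max}(aL)=a\Lambda_{max}L$.
\item $mM=(m\Lambda_{max})L\subset\Lambda L=L$, and $L\subset M$ since $1_A\in\Lambda_{max}$.
\end{itemize}

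What your route buys is a short, self-contained proof inside the paper's framework. It needs only classical class number finiteness and invertibility of fractional ideals of the maximal order. It also gives an explicit bound: the number of classes is at most $|G([\Lambda_{max}]_\varepsilon)|$ times the number of subgroups of $M/mM\cong(\Z/m\Z)^{\dim A}$. The mechanism closely parallels what the paper itself does in Step 2 of the proof of Theorem \ref{t6.5}: pass to $\Lambda_{max}(F)$, use invertibility there, and return via an integer $r$ with $r\Lambda_{max}(F)\subset\Lambda^{[0]}$.

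What the citation buys is generality. Your argument uses commutativity essentially, through a unique maximal order containing every order and invertibility of all its full lattices. So it does not directly extend to the noncommutative or module-theoretic setting covered by the Jordan--Zassenhaus theorem.
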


Theorem \ref{t6.3} does not hold for $A$ not separable,
as the following construction of Faddeev shows.

\begin{theorem}\label{t6.4}
\cite[Proposition 25.1]{Fa65}

Suppose that $A$ is not separable, so its radical $R$ is not 
$0$. Let $\Lambda$ be an order in $A$.
Then the set $\{\Gamma\in\LL(A)\,|\, 
\Gamma\textup{ an order}, \Gamma\supset\Lambda\}$ is infinite.
And therefore the set 
$\{[L]_\varepsilon\,|\, L\in\LL(A),\OO(L)\supset\Lambda\}$
is infinite.
\end{theorem}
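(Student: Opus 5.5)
Since $R\neq 0$, the plan is to build, starting from $\Lambda$, an infinite strictly increasing chain of orders $\Lambda=\Gamma_0\subsetneq\Gamma_1\subsetneq\Gamma_2\subsetneq\dots$ inside $A$. The natural candidates are
$$\Gamma_m:=\Lambda+\tfrac{1}{N_m}\bigl(\Lambda\cap R^{n}\bigr),$$
where $n\ge1$ is the largest integer with $R^n\neq\{0\}$ (so $n=n_{max}$ of Lemma \ref{t3.3}) and $N_m\in\N$ grows.

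\textbf{Step 1: $\Gamma_m$ is an order.} By \eqref{3.6} we have $R\cdot R^{n}=R^{n+1}=\{0\}$, so $R^n$ is an ideal of $A$ with $R^n\cdot R^n=\{0\}$ (using $n\ge1$ and $2n\ge n+1$). Put $M:=\Lambda\cap R^n$; it is a full lattice in $R^n$ by Lemma \ref{t2.2}(c). Then $\frac1N M\cdot\frac1N M=\{0\}$. Moreover, for $\lambda\in\Lambda$ write $\lambda=\lambda_F+\lambda_R$ with $\lambda_F\in F$, $\lambda_R\in R$. Since $\lambda_R R^n=0$, we get $\lambda\cdot\frac1N M=\frac1N\lambda_F M$. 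This lies in $\frac1N(\Lambda\cap R^n)$, because $\lambda M\subset\Lambda\cap R^n$ and $\lambda M=\lambda_F M$. Hence $\Gamma_N:=\Lambda+\frac1N M$ is closed under multiplication. It contains $1_A$ and is a finitely generated full lattice, so it is an order containing $\Lambda$.

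\textbf{Step 2: infinitely many distinct orders.} Since $M\neq\{0\}$ is a lattice, we have $\Lambda\cap\frac1N M\supset M$, and $\bigcup_N\frac1N M=\Q M=R^n$ is not contained in the lattice $\Lambda$. Therefore the indices $[\Gamma_N:\Lambda]=[\tfrac1N M:\tfrac1NM\cap\Lambda]$ are unbounded as $N$ runs over powers of a prime. Indeed, $\frac1N M\cap\Lambda=\frac1N M\cap(\Lambda\cap R^n)=M$ after identification, so the index equals $N^{\rank M}$. This gives infinitely many distinct orders $\Gamma\supset\Lambda$, proving the first claim.

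\textbf{Step 3: infinitely many $\varepsilon$-classes.} Each order $\Gamma\supset\Lambda$ satisfies $\OO(\Gamma)=\Gamma\supset\Lambda$. The order of a lattice is an invariant of its $\varepsilon$-class by Lemma \ref{t5.5}. So distinct orders $\Gamma$ lie in distinct $\varepsilon$-classes: if $\Gamma\sim_\varepsilon\Gamma'$ then $\Gamma=\OO(\Gamma)=\OO(\Gamma')=\Gamma'$. The infinitely many $\Gamma_N$ therefore give infinitely many classes in the set in question.

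The only delicate point, and the main place to be careful, is Step 1: the closure computation $\Lambda\cdot\frac1N M\subset\frac1N M$. This works precisely because $M$ is a $\Lambda$-module, as the intersection of the ideal $R^n$ with $\Lambda$, and because $R^n$ squares to zero. If one instead tried $R^1$ in place of $R^n$, products would escape, which is why the top nonzero power of the radical is used.
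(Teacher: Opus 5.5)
Your proof is correct. It follows the same overall strategy as the paper: produce infinitely many over-orders of $\Lambda$ by dividing nilpotent elements of $\Lambda$ by growing integers, then pass to $\varepsilon$-classes using that an order is its own order. Your Step 3 is exactly the paper's observation \eqref{6.1}. Where you differ is the chain itself.

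The paper takes $\Lambda_m=\Lambda+\sum_{l=1}^{n_{max}}2^{-lm}(\Lambda\cap R)^l$, which enlarges $\Lambda$ in every layer of the radical filtration. There the graded weight $2^{-lm}$ on the $l$-th power is what makes the set closed under multiplication: one has $2^{-l_1m}(\Lambda\cap R)^{l_1}\cdot 2^{-l_2m}(\Lambda\cap R)^{l_2}=2^{-(l_1+l_2)m}(\Lambda\cap R)^{l_1+l_2}$, and the terms with $l_1+l_2>n_{max}$ vanish.

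You use only the top layer $R^{n_{max}}$. It is an ideal annihilated by $R$, hence of square zero, so a single uniform denominator suffices and closedness is a one-line check. Your index formula $[\Gamma_N:\Lambda]=N^{\rank M}$ also makes the distinctness of the orders explicit, whereas the paper states the strict inclusions $\Lambda_m\subsetneqq\Lambda_{m+1}$ without spelling out why. The paper's orders are larger and grow in all radical layers at once. For the bare infiniteness statement, your square-zero version is the more economical argument.

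Two cosmetic points. First, the decomposition $\lambda=\lambda_F+\lambda_R$ in Step 1 is unnecessary. You already note that $\lambda M\subset\Lambda\cap R^n=M$, and this follows directly from $M\subset\Lambda$, $\Lambda\Lambda\subset\Lambda$, and $R^n$ being an ideal. Second, Lemma \ref{t2.2} (c) concerns two full lattices in the same space, so it is not the right reference for $\Lambda\cap R^n$ being a full lattice in $R^n$. That fact is immediate anyway: $\Lambda$ is full in $A$, so every $v\in R^n$ has a nonzero integer multiple in $\Lambda\cap R^n$, and a subgroup of a lattice is finitely generated.
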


{\bf Proof:} For example, the following is an infinite 
sequence of orders $\Lambda_m$ 
with $\Lambda\subsetneqq \Lambda_1\subsetneqq\Lambda_2
\subsetneqq ...\subsetneqq\Lambda_m\subsetneqq\Lambda_{m+1}
\subsetneqq...$,
\begin{eqnarray*}
\Lambda_m:=\Lambda+\sum_{l=1}^{n_{max}}\frac{1}{2^{lm}}
\bigl(\Lambda\cap R\bigr)^l\quad\textup{for }m\in\N
\end{eqnarray*}
where $n_{max}\in\N$ is unique with
$R^{n_{max}}\supsetneqq R^{n_{max}+1}=\{0\}$.
The second statement follows from the first statement and from
the following basic observation:
\begin{eqnarray}\label{6.1}
\Lambda_1,\Lambda_2\in\LL(A)\textup{ orders 
with }[\Lambda_1]_\varepsilon=[\Lambda_2]_\varepsilon
\quad\Rightarrow\quad \Lambda_1=\Lambda_2,
\end{eqnarray}
which follows from $\Lambda_1=\OO(\Lambda_1)=
\OO(\Lambda_2)=\Lambda_2$. 
\hfill$\Box$

\bigskip
But if one restricts to {\it exact} $\Lambda$-ideals,
Theorem \ref{t6.3} generalizes to not separable algebras.
This extension of the Jordan-Zassenhaus theorem in Theorem
\ref{t6.5} is one of the five main results of this
paper. In the case of an algebra $A$ with radical $R$ with
$R^2=0$ it was proved by Faddeev \cite{Fa67}.

\begin{theorem}\label{t6.5}
Let $A$ be as in Theorem \ref{t3.1}, so $A$ is a finite 
dimensional commutative $\Q$-algebra with unit element.
For any order $\Lambda$ in $A$, the set
$$\{[L]_\varepsilon\,|\, L\in\LL(A),\OO(L)= \Lambda\}$$
of $\varepsilon$-classes of exact $\Lambda$-ideals is finite.
\end{theorem}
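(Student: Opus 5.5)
The plan is to show that, after multiplying $L$ by a suitable unit, $L$ lies between two full lattices chosen from a finite list depending only on $\Lambda$. Lemma \ref{t2.2} (d) (or Lemma \ref{t2.4} (c)) then leaves only finitely many possibilities. I would argue level by level along a filtration of $A$ coming from the radical $R$. The radical filtration controls $L$ from below; the socle filtration controls it from above, since Lemma \ref{t3.3} (b), \eqref{3.10}, supplies exactly the non-degeneracy hypothesis \eqref{2.1} needed for Lemma \ref{t2.3}.

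\emph{Step 1 (semisimple part).} Let $\pi:A\to A/R\cong F$ be the projection $\pr_F$, which respects multiplication by Remark \ref{t3.2} (ii). Then $\pi(L)$ is a full lattice in $F$ with $\OO(\pi(L))\supset\pi(\Lambda)$. By Theorem \ref{t6.3} (Jordan--Zassenhaus for the separable algebra $F$), $\pi(L)$ lies in one of finitely many $\varepsilon$-classes of $F$. Units of $F$ lift to units of $A$ (Remark \ref{t3.2} (iii)). So after replacing $L$ by $aL$ with $a\in A^{unit}$, we may assume that $\pi(L)$ belongs to a fixed finite list $M^{(1)},\dots,M^{(s)}$. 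This fixes the top graded piece $L_{[top]}$ of $L$ in the filtration $R^{n_{max}}\subset\dots\subset R\subset A$ (indexed increasingly as in Lemma \ref{t2.4}).

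\emph{Step 2 (normalization by $1+R$).} Units of the form $1_A+r$ with $r\in R$ do not change $\pi(L)$, but they change the lifts of $\pi(L)$ into $A$. I would proceed inductively on $l=1,\dots,n_{max}$, working modulo $R^{l+1}$. Fix a $\Z$-basis of $\pi(L)$ containing an element $u$ that is a unit of $F$; such an element exists because $\pi(L)$ generates $F$. Multiplying by a suitable $1+r$ with $r\in R^l$, I can arrange that the lift of $u$ in $L$ is congruent modulo $R^{l+1}$ to a fixed element, up to a bounded (finite) ambiguity coming from $L\cap R^l$. The upshot is that $L$ then contains a lattice of the form $u'\Lambda$ with $u'$ drawn from a finite set. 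This gives the lower bound $M_1:=u'\Lambda\subset L$.

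\emph{Step 3 (upper bound from exactness; main obstacle).} This is where $\OO(L)=\Lambda$, rather than merely $\OO(L)\supset\Lambda$, must be used; Theorem \ref{t6.4} shows the statement fails without it. The idea is to look at the graded pieces $L_{[j]}$ for the socle filtration $S_l=\Ann(R^l)$. An element $x\in R^{l_1}$ with $xL\subset L$ lies in $\OO(L)=\Lambda$. For the pairing $R^{[l_1]}\times S_{[l_2]}\to S_{[l_2-l_1]}$, which is nondegenerate by \eqref{3.10}, Lemma \ref{t2.3} (a) shows that quotients such as $L_{[l_2-l_1]}:L_{[l_2]}$ are full lattices. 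Exactness then forces each deeper piece $L_{[j]}$ to be small relative to the pieces above it. Otherwise, elements of $R$ that are too divisible (compare the orders $\Lambda_m$ in the proof of Theorem \ref{t6.4}) would stabilize $L$ without lying in $\Lambda$.

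Concretely, I would show by induction on $j$ that $[L_{[j]}:(M_1)_{[j]}]$ is bounded in terms of $\Lambda$ and the data fixed in Steps 1--2. By the index formula \eqref{2.2}, this bounds $[L:M_1]$, and Lemma \ref{t2.2} (d) then gives finiteness. The difficult points are making this inductive bound uniform, and the interaction between the two filtrations. If the direct argument stalls, the fallback is to induct on $n_{max}$ via $A/R^{n_{max}}$, with the extension data controlled by Lemma \ref{t2.3}. The catch is that the image of $L$ there need not be exact for the image of $\Lambda$, so exactness would have to be tracked through an intermediate finite set of possible orders.
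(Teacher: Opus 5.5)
Your Steps 1 and 2 agree with the paper's reduction. Step 2 needs no induction: if $u\in\pr_F(L)$ is a unit of $F$ and $\tilde u=u+r\in L$ lifts it, then $u\tilde u^{-1}\in 1_A+R$, and $u\tilde u^{-1}L$ has the same projection and contains $u\Lambda$. (The paper picks representatives $K^i\ni 1_A$ and gets $\Lambda\subset L$.)

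The genuine gap is Step 3. It carries the entire content of the theorem, and you leave it as a heuristic with no base case and no inductive step. The one concrete tool you name does not work as stated. Read literally, $L_{[l_2-l_1]}:L_{[l_2]}$ is a subset of $R^{[l_1]}$, but \eqref{3.10} gives non-degeneracy only in the $S_{[l_2]}$-variable. In Example \ref{t3.4}, $\oooo{y}\in R^{[1]}$ annihilates $S_{[3]}=\Q\oooo{x}$, so that set contains the line $\Q\oooo{y}$ and is not a lattice. Moreover, for a general $x\in R^{l_1}$ the condition $xL\subset L$ depends on how $L$ is glued across the filtration. So ``$xL\subset L\Rightarrow x\in\Lambda$'' does not by itself bound individual graded pieces.

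The missing idea is to use exactness \emph{only on the socle} $S_1=\Ann(R)$, where it becomes a condition on graded data. After that you go \emph{upward} along the socle filtration using only $\Lambda\subset\OO(L)$.

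For $s\in S_1$ one has $sL=s\cdot\pr_F(L)=sK$, with $K$ the lattice fixed in Step 1. So $\OO(L)=\Lambda$ gives $\Lambda\cap S_1=(L\cap S_1):K$. Choose $N\in\N$ with $K\subset\frac1N\pr_F(\Lambda)$ (Lemma \ref{t2.2} (c)). Since $\pr_F(\Lambda)\cdot(L\cap S_1)\subset L\cap S_1$, you get $N(L\cap S_1)\subset\Lambda\cap S_1$, which is an upper bound for $L_{[1]}$. The paper obtains the same bound by dualizing via Lemma \ref{t2.3} (b) and inverting $K\cdot\Lambda_{max}(F)$.

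Next, $(\Lambda\cap R)(L\cap S_{m+1})\subset L\cap S_m$. Writing $\Lambda^{[1]}$ for the image of $\Lambda\cap R$ in $R^{[1]}$, this gives $\Lambda^{[1]}L_{[m+1]}\subset L_{[m]}$, i.e.\ $L_{[m+1]}\subset L_{[m]}:\Lambda^{[1]}$. This is a full lattice in $S_{[m+1]}$ by Lemma \ref{t2.3} (a) and \eqref{3.10} with $l_1=1$.

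Thus exactness is used once, to bound the socle piece against the top piece $\pr_F(L)$. After that, each higher socle piece is bounded by the one below it, not the other way round as your heuristic suggests. Together with the lower bound $\Lambda\subset L$ (or $u\Lambda\subset L$), Lemma \ref{t2.4} (c) finishes the proof, and no induction on $n_{max}$ is needed.
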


The proof is given after the following remarks.

\begin{remarks}\label{t6.6}
(i) In the case of a separable algebra $A$, Theorem \ref{t6.5}
is equivalent to Theorem \ref{t6.3} because between a given
order $\Lambda$ and the maximal order $\Lambda_{max}(A)$
there are only finitely many orders.

(ii) Theorem \ref{t6.5} does not hold for a noncommutative
$\Q$-algebra with radical $R\neq 0$, not even if $R^2=0$.
Faddeev gives an example \cite[$7^\circ$]{Fa67}.

(iii) Our proof below of Theorem \ref{t6.5} and Faddeev's proof
for the case with $R^2=0$ in \cite{Fa67} have both three steps.
In \cite{Fa67} they are given in the sections
$4^\circ$, $5^\circ$ and $6^\circ$. 

The steps 1 coincide essentially.
Though our choice of $L$ with $1_A\in L$ is not made in 
\cite[$4^\circ$]{Fa67}.

Our step 2 follows the argument of Faddeev's step 2 closely.

But our step 3 proceeds differently. It uses the socle 
filtration in Lemma \ref{t3.3} together with the action of
$R^{[1]}$ on it (also in Lemma \ref{t3.3}) and Lemma \ref{t2.3}.
It is less technical and generalizes more easily to algebras
$A$ with $R^2\neq 0$ than Faddeev's step 3.
\end{remarks}

{\bf Proof of Theorem \ref{t6.5}:} Fix an order $\Lambda$ in $A$. 

{\bf Step 1:} Recall from Theorem \ref{t3.1} the decompositions
$A=\bigoplus_{j=1}^kA^{(j)}$ and $A^{(j)}=F^{(j)}\oplus N^{(j)}$,
the separable subalgebra $F:=\bigoplus_{j=1}^k F^{(j)}$ of $A$,
the radical $R=\bigoplus_{j=1}^k N^{(j)}$ of $A$, and the 
decomposition $A=F\oplus R$. 
Therefore there is a natural isomorphism
\begin{eqnarray*}
F\cong R^{[0]}=R^0/R^1=A/R.
\end{eqnarray*}
Here and in the following we use the notations from Lemma
\ref{t3.3} for the socle filtration $S_\bullet$ on $A$, 
the radical filtration $R^\bullet$ on $A$ 
and the corresponding quotients $S_{[l]}$ and $R^{[l]}$. 

We extend the notations as follows.
For each full lattice $L$ in $A$, its image under the
projection to $R^{[0]}$ is called $L^{[0]}$.
It is a full lattice in $R^{[0]}$ and is identified with a 
full lattice in $F$. For $m\in\{1,2,...,n_{max}\}$ the subquotient
$$L_{[m]}:= (L\cap S_m+S_{m-1})/S_{m-1}\subset S_m/S_{m-1}=S_{[m]}$$
is a full lattice in $S_{[m]}$. This notation $L_{[m]}$ is compatible
with the one in Lemma \ref{t2.4}, with $V=A$ and $V_\bullet =S_\bullet$.

$\Lambda^{[0]}$ is an order in $F$. 
If $L$ is an exact $\Lambda$-ideal then the full lattice 
$L^{[0]}$ in $F$ is a $\Lambda^{[0]}$-ideal (but not 
necessarily an exact $\Lambda^{[0]}$-ideal).
An $\varepsilon$-class of full lattices in $A$ projects
by $L\mapsto L^{[0]}$ to an $\varepsilon$-class of full lattices
in $F$.

By the Jordan-Zassenhaus theorem \ref{t6.3} the set of 
$\varepsilon$-classes of $\Lambda^{[0]}$-ideals in $F$ is finite,
so it can be indexed by a finite set $I$.
We choose for each $i\in I$ a representative $K^i\in\LL(F)$.
We can choose it and will choose it such that $1_A\in K^i$.

Each $\varepsilon$-class of exact $\Lambda$-ideals contains
full lattices $L$ with $L^{[0]}=K^i$ for some $i\in I$.
If $1_A\notin L$ for such an $L$, 
then $1_A+r\in L$ for some $r\in R$,
because $L^{[0]}=K^i$ and $1_A\in K^i$.
Then we replace $L$ by $\www{L}:=(1_A+r)^{-1}L$. 
It satisfies $[\www{L}]_\varepsilon=[L]_\varepsilon$,
${\www{L}}^{[0]}=L^{[0]}=K^i$ and $1_A\in \www{L}$.

Finally, for an exact $\Lambda$-ideal $L$ the condition
$1_A\in L$ is equivalent to the condition $L\supset \Lambda$.

Therefore it is sufficient to show for each $i\in I$ that the
set 
\begin{eqnarray}\label{6.2}
\{[L]_\varepsilon\,|\, L\in\LL(A)\textup{ with }\OO(L)=\Lambda,
L^{[0]}=K^i,L\supset\Lambda\}
\end{eqnarray}
is finite. In fact, we will show for each $i\in I$ that the set 
\begin{eqnarray}\label{6.3}
\{L\,|\, L\in\LL(A)\textup{ with }\OO(L)=\Lambda,
L^{[0]}=K^i,L\supset\Lambda\}
\end{eqnarray}
is finite.

{\bf Step 2:}
Recall from Lemma \ref{t3.3} that $S_{[1]}=S_1=\Ann(R)\subset A$
is the socle of $A$, that $\Lambda_{[1]}=\Lambda\cap S_1$
and that $L_{[1]}=L\cap S_1$ for any full lattice $L$ in $A$.
In step 2 we will show for each $i\in I$ that the set
\begin{eqnarray}\label{6.4}
\{L_{[1]}\,|\, L\in\LL(A)\textup{ with }\OO(L)=\Lambda,
L^{[0]}=K^i,L\supset\Lambda\}
\end{eqnarray}
is finite. For $L$ with $\OO(L)=\Lambda$ and $L^{[0]}=K^i$
the condition $\Lambda=L:L$ and the multiplication map
$R^{[0]}\times S_1\to S_1$ in Lemma \ref{t3.3}
yield the condition
\begin{eqnarray*}
\Lambda_{[1]}=\Lambda\cap S_1 = (L\cap S_1): L^{[0]}
= L_{[1]} : K^i.
\end{eqnarray*}
This condition indeed bounds $L_{[1]}=L\cap S_1$ from above,
as we will show now. The argument follows \cite[$5^\circ$]{Fa67}.

We apply Lemma \ref{t3.3} and Lemma \ref{t2.3} (b) with
$$ (V_1,V_2,V_3,\beta)_{\textup{in Lemma \ref{t2.3}}}
\sim (R^{[0]},S_{[1]},S_{[1]},
\textup{multiplication})_{\textup{here}}$$
and write also $\beta^*$ as multiplication. Then
$$\Lambda_{[1]}^{*\Z} = L_{[1]}^{*\Z}\cdot K^i.$$
We can multiply both sides with the maximal order
$\Lambda_{max}(F)$. Then $K^i\cdot \Lambda_{max}(F)$ is a
$\Lambda_{max}(F)$-ideal, so it is invertible by
Corollary \ref{t6.2}. We obtain
\begin{eqnarray*}
\Lambda_{[1]}^{*\Z}\cdot \Lambda_{max}(F) &=& 
L_{[1]}^{*\Z}\cdot \Lambda_{max}(F)\cdot 
(K^i\cdot \Lambda_{max}(F)),\\
\Lambda_{[1]}^{*\Z}\cdot \Lambda_{max}(F)\cdot
(K^i\cdot \Lambda_{max}(F))^{-1} &=& 
L_{[1]}^{*\Z}\cdot \Lambda_{max}(F).
\end{eqnarray*}
There is a natural number $r$ with 
$r\cdot\Lambda_{max}(F)\subset\Lambda^{[0]}$ 
by Lemma \ref{t2.2} (c). Multiplying both sides by $r$ gives
\begin{eqnarray*}
r\cdot \Lambda_{[1]}^{*\Z}\cdot\Lambda_{max}(F)\cdot
(K^i\cdot\Lambda_{max}(F))^{-1} &=& 
L_{[1]}^{*\Z}\cdot r\cdot \Lambda_{max}(F)\\
&\subset& L_{[1]}^{*\Z}\cdot \Lambda^{[0]} = L_{[1]}^{*\Z},\\
\Bigl(r\cdot \Lambda_{[1]}^{*\Z}\cdot\Lambda_{max}(F)\cdot
(K^i\cdot\Lambda_{max}(F))^{-1}\Bigr)^{*\Z}
&\supset& L_{[1]}.
\end{eqnarray*}
This bounds $L_{[1]}$ from above. $L_{[1]}$ is bounded from 
below by $L_{[1]}\supset \Lambda_{[1]}$ because
$L\supset \Lambda$. By Lemma \ref{t2.2} (d) the set 
in \eqref{6.4} is finite.

{\bf Step 3:}
We will show for each $i\in I$ inductively for 
$m\in\{1,2,...,n_{max}+1\}$ that the set of tuples
\begin{eqnarray}\label{6.5}
\{(L_{[m]},L_{[m-1]},...,L_{[1]})&|& L\in\LL(A)\textup{ with }
\OO(L)=\Lambda,\\ 
&&L^{[0]}=K^i,\ L\supset \Lambda\}\nonumber
\end{eqnarray}
is finite. Step 2 shows this for $m=1$. Suppose it is true for
some $m\leq n_{max}$. We fix
$(L_{[m]},L_{[m-1]},...,L_{[1]})=(K_m,K_{m-1},...,K_1)$ and want
to show that the set
\begin{eqnarray}\label{6.6}
\{L_{[m+1]}&|& L\in\LL(A)\textup{ with }\OO(L)=\Lambda,\ 
L^{[0]}=K^i,\ L\supset \Lambda,\\
&&(L_{[m]},...,L_{[1]})=(K_m,...,K_1)\}\nonumber
\end{eqnarray}
is finite. 

We apply Lemma \ref{t3.3} and Lemma \ref{t2.3} (a) with
$$(V_1,V_2,V_3,\beta)_{\textup{in Lemma \ref{t2.3}}}
\sim (R^{[1]},S_{[m+1]},S_{[m]},\textup{multiplication})_{\textup{here}}.$$
Then
$$\Lambda^{[1]}\cdot L_{[m+1]}\subset L_{[m]}=K_m,$$
so
$$L_{[m+1]}\subset K_m:\Lambda^{[1]}.$$
$L_{[m+1]}$ is therefore bounded from above by the full lattice
$K_m:\Lambda^{[1]}$ in $S_{[m+1]}$.
It is bounded from below by $L_{[m+1]}\supset \Lambda_{[m+1]}$
because $L\supset \Lambda$. 
Therefore the set in \eqref{6.6} is finite.

The set in \eqref{6.5} is finite for any $m$, especially for
$m=n_{max}+1$. 

We conclude with Lemma \ref{t2.4} (c) with $V=A$ and 
$V_\bullet=S_\bullet$ that the set in 
\eqref{6.3} is finite.\hfill$\Box$

\section{Localization}\label{s7}
\setcounter{equation}{0}
\setcounter{table}{0}

In \cite[Ch. I \S 12]{Ne99} orders in algebraic number fields
are studied, and good use is made of localizations by prime
ideals.
For orders in an arbitrary algebra $A$ as in Theorem \ref{t3.1}
another family of localizations is proposed in \cite{Fa64}
and \cite[$6^\circ$]{Fa65}. In this section we cite
some basic results on this family of localizations and prove
the new Theorem \ref{t7.6}. In the next section we apply
these tools to obtain results similar to some results
in \cite[Ch. I \S 12]{Ne99}.

\begin{definition}\label{t7.1}
Recall that $\P\subset\N$ denotes the set of all prime numbers
and that for $p\in\P$ $\F_p:=\Z/p\Z$ denotes the field with
$p$ elements. 

(a) For $p\in\P$ denote (this notation follows
\cite[Ch. I \S 11]{Ne99})
\begin{eqnarray*}
\Z_{(p)}&:=& \{\frac{a}{b}\,|\, a\in\Z,b\in\N,\gcd(a,b)=1,
\gcd(p,b)=1\}\subset\Q.
\end{eqnarray*}
It is a subring of $\Q$ and a local ring with maximal ideal
$p\Z_{(p)}$ and group of units
$\Z_{(p)}^{unit}=\Z_{(p)}-p\Z_{(p)}$. 

(b) Let $V$ be an $n$-dimensional $\Q$-vector space for some
$n\in\N$. Let $L$ be a full lattice in $V$. For $p\in\P$ denote
\begin{eqnarray*}
L_{(p)}&:=& \Z_{(p)}\cdot L:=\{qa\,|\, q\in\Z_{(p)},a\in L\}
\subset V.
\end{eqnarray*}
It is a free $\Z_{(p)}$-module of rank $n$ with 
$L\subset L_{(p)}\subset \Q\cdot L_{(p)}=V$.
\end{definition}

The following basic facts are stated in \cite[$6^\circ$]{Fa65}.
Part (c) is proved in \cite[$3^\circ$]{Fa64}.

\begin{theorem}\label{t7.2}
Let $V$ be an $n$-dimensional $\Q$-vector space for some 
$n\in\N$. Let $L_1,L_2,L_3$ and $L_4$ be full lattices in $V$
with $L_3\supset L_4$.

(a)
\begin{eqnarray*}
L_1 &=& \bigcap_{p\in\P} (L_1)_{(p)}
\end{eqnarray*}
and for each $p\in\P$ 
\begin{eqnarray*}
(L_1+L_2)_{(p)}&=& (L_1)_{(p)}+(L_2)_{(p)},\\
(L_1\cap L_2)_{(p)}&=& (L_1)_{(p)}\cap (L_2)_{(p)},\\
(L_1\cdot L_2)_{(p)}&=& (L_1)_{(p)}\cdot (L_2)_{(p)}
\quad\textup{if }V=A\textup{ as in Theorem \ref{t3.1},}\\
(L_1: L_2)_{(p)}&=& (L_1)_{(p)}: (L_2)_{(p)}
\quad\textup{if }V=A\textup{ as in Theorem \ref{t3.1}.}
\end{eqnarray*}

(b) The subset of $\P$ of elements $p$ with
$$(L_1)_{(p)}\neq (L_2)_{(p)}$$
is finite.

(c) \cite[Proposition 6.1]{Fa65} \cite[$3^\circ$ Lemma]{Fa64}
For each $p\in\P$ let $U_p\subset V$ be a free 
$\Z_{(p)}$-module of rank $n$. Suppose that
$$U_p=(L_1)_{(p)}\quad\textup{for all except finitely many }
p\in\P.$$
Then there is a unique full lattice $L$ in $V$ with
$$L_{(p)}=U_p\quad\textup{for each }p\in\P.$$
By part (a) it is $L=\bigcap_{p\in\P}U_p.$

(d) There is a natural isomorphism
\begin{eqnarray}\label{7.1}
L_3/L_4&\cong& \bigoplus_{p\in\P:\ (L_3)_{(p)}\neq (L_4)_{(p)}}
(L_3)_{(p)}/(L_4)_{(p)}
\end{eqnarray}
as torsion $\Z$-modules respectively finite additive groups.
Observe that by part (b) the set $\{p\in\P\,|\, 
(L_3)_{(p)}\neq (L_4)_{(p)}\}$ is finite.
\end{theorem}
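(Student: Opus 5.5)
The plan is to prove the four parts of Theorem \ref{t7.2} in the order (b), (a), (c), (d). Throughout I use Lemma \ref{t2.2} (c): any two full lattices differ by bounded integer factors.

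\emph{Part (b).} By Lemma \ref{t2.2} (c) choose $k_1,k_2\in\N$ with $k_1L_1\subset L_2\subset k_2^{-1}L_1$. For $p\nmid k_1k_2$ both $k_1$ and $k_2$ are units in $\Z_{(p)}$. Localizing the chain therefore gives $(L_1)_{(p)}\subset (L_2)_{(p)}\subset (L_1)_{(p)}$. So $(L_1)_{(p)}\neq(L_2)_{(p)}$ can only happen for the finitely many primes dividing $k_1k_2$.

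\emph{Part (a).} I fix a $\Z$-basis $b_1,\dots,b_n$ of $L_1$. Then $(L_1)_{(p)}=\bigoplus_i\Z_{(p)}b_i$, so
\[
\bigcap_p (L_1)_{(p)}=\bigoplus_i\Bigl(\bigcap_p\Z_{(p)}\Bigr)b_i=L_1,
\]
using $\bigcap_p\Z_{(p)}=\Z$ (a reduced denominator $b>1$ has a prime factor $p$, and then $a/b\notin\Z_{(p)}$).

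The sum and product formulas are immediate, since $\Z_{(p)}$-spans commute with sums and $\Z_{(p)}\cdot(L_1L_2)=(\Z_{(p)}L_1)(\Z_{(p)}L_2)$. For intersection and division I would use that $\Z_{(p)}$ is flat over $\Z$, i.e.\ localization is exact. Concretely, for the intersection: take $x\in(L_1)_{(p)}\cap(L_2)_{(p)}$. There is $s\in\Z$, prime to $p$, with $sx\in L_1$ and $sx\in L_2$, so $x\in(L_1\cap L_2)_{(p)}$.

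For division the inclusion $(L_1:L_2)_{(p)}\subset(L_1)_{(p)}:(L_2)_{(p)}$ is clear. Conversely, let $a$ satisfy $a(L_2)_{(p)}\subset(L_1)_{(p)}$. Apply $a$ to the finitely many basis elements of $L_2$ and clear the $p$-prime denominators by a single $s$. This gives $sa\in L_1:L_2$, hence $a\in(L_1:L_2)_{(p)}$.

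\emph{Part (c).} Uniqueness follows from (a). For existence, let $P$ be the finite exceptional set of primes, and define $L:=\bigcap_p U_p$. For each $p\in P$, both $U_p$ and $(L_1)_{(p)}$ are free of rank $n$ spanning $V$. Hence there are integers $m_p$, powers of $p$ up to unit factors, with $m_p(L_1)_{(p)}\subset U_p\subset m_p^{-1}(L_1)_{(p)}$. Set $m=\prod_{p\in P}m_p$. Then $mL_1\subset L\subset m^{-1}L_1$, because $m$ is a unit at every $p\notin P$. So $L$ is a $\Z$-module sandwiched between full lattices, hence itself a full lattice.

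The main obstacle is showing $L_{(p)}=U_p$ for each $p\in P$; only $\subset$ is obvious. For $\supset$, let $u\in U_p$. I need a $p$-prime integer $s$ with $su\in U_q$ for all $q$. I choose $s=k^N$, where $k$ is a common multiple of the other relevant primes and $N$ is large enough to push $u$ into $U_q$ for the finitely many $q\in P\setminus\{p\}$. Since $u\in m^{-1}L_1$, this also handles all $q\notin P$, provided $s$ additionally absorbs the prime-to-$p$ part of $m$. This is a Chinese-remainder-style argument.

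\emph{Part (d).} The quotient $L_3/L_4$ is a finite abelian group, so it is the direct sum of its $p$-primary parts. The $p$-primary part is $(L_3/L_4)\otimes\Z_{(p)}=(L_3)_{(p)}/(L_4)_{(p)}$, by exactness of localization. This part vanishes exactly when $(L_3)_{(p)}=(L_4)_{(p)}$, which gives \eqref{7.1}.
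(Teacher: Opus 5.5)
The paper gives no proof of Theorem~\ref{t7.2}. It only cites Faddeev (\cite[$6^\circ$]{Fa65}, and \cite[$3^\circ$]{Fa64} for part (c)), so your proposal is a self-contained argument of its own. It uses flatness of $\Z_{(p)}$, the primary decomposition of finite abelian groups, and a Chinese-remainder argument. Parts (a), (b) and (d) are correct as written.

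In (d), state explicitly that the isomorphism is $x+L_4\mapsto (x+(L_4)_{(p)})_p$, since Theorem~\ref{t7.6}~(a) later uses exactly this map as a ring isomorphism. Your tensor argument does produce this map.

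In (c), the sandwich $mL_1\subset L\subset m^{-1}L_1$ is correct, provided each $m_p$ is an honest power of $p$. You may assume this, because $U_p$ and $(L_1)_{(p)}$ are $\Z_{(p)}$-modules, so prime-to-$p$ factors can be dropped. That choice is what makes $m$ a unit at every $q\notin P$. It also gives $L_{(q)}=(L_1)_{(q)}=U_q$ for $q\notin P$ immediately.

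The step that fails is the one you call the main obstacle: the claim $u\in m^{-1}L_1$ is false for a general $u\in U_p$. $U_p$ is a $\Z_{(p)}$-module, so it is divisible by every prime $\ell\neq p$ and lies in no lattice. For example, if $0\neq b\in m_pL_1\subset U_p$, then every $\ell^{-k}b$ lies in $U_p$, but not in $m^{-1}L_1$ for large $k$. As written you therefore have no control at the infinitely many $q\notin P$. Your remark about $s$ absorbing the prime-to-$p$ part of $m$ concerns the primes in $P-\{p\}$, not those outside $P$, where $m$ is already a unit.

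The repair is short. Since $U_p\subset m_p^{-1}(L_1)_{(p)}=\Z_{(p)}\cdot m_p^{-1}L_1$, choose $t\in\Z$ with $p\nmid t$ and $tu\in m_p^{-1}L_1\subset m^{-1}L_1$. Put $s:=t\prod_{q\in P-\{p\}}m_q^2$. Then $p\nmid s$, and $su\in U_q$ for every $q$:
\begin{itemize}
\item for $q=p$, trivially;
\item for $q\notin P$, because $su\in m^{-1}L_1\subset (L_1)_{(q)}=U_q$;
\item for $q\in P-\{p\}$, because
\[
su=\frac{s}{t\,m\,m_q}\cdot m_q\,(m\,tu),\quad m\,tu\in L_1,\quad m_qL_1\subset U_q,\quad \frac{s}{t\,m\,m_q}=\frac{\prod_{q'\in P-\{p,q\}}m_{q'}}{m_p}\in\Z_{(q)}.
\]
\end{itemize}
Hence $su\in\bigcap_q U_q=L$, and $u=s^{-1}(su)\in L_{(p)}$.

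Alternatively, prove (d) first and deduce (c) from it. Subgroups between $mL_1$ and $m^{-1}L_1$ correspond to tuples of subgroups of the primary components $(m^{-1}L_1)_{(q)}/(mL_1)_{(q)}$. The preimage of the tuple $(U_q/(mL_1)_{(q)})_{q\in P}$ is exactly $\bigcap_q U_q$, with the required localizations, and this avoids the element chase.
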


The following result gives a remarkabe relation between 
invertibility and localization if $V=A$ is as in Theorem 
\ref{t3.1}.

\begin{theorem}\label{t7.3}
\cite[Proposition 27.1]{Fa65}
Let $A$ be as in Theorem \ref{t3.1}.
Let the full lattice $L\in \LL(A)$ have order $\Lambda:=\OO(L)$. Then:
\begin{eqnarray}\label{7.2}
L\textup{ is invertible}\iff 
L_{(p)}\textup{ is a principal }\Lambda_{(p)}
\textup{-ideal for each }p\in\P,\\
\textup{i.e. }L_{(p)}=a_p\Lambda_{(p)}
\textup{ for some }a_p\in A^{unit}
\textup{ for each }p\in\P,\nonumber\\
\textup{with }a_p=1_A\textup{ for all except finitely many }
p.\nonumber
\end{eqnarray}
\end{theorem}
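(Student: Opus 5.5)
The plan is to translate invertibility into a local statement using the compatibility of localization with products and divisions in Theorem \ref{t7.2} (a), and to use Theorem \ref{t5.6} (c), criterion (iii): $L$ is invertible if and only if $L\cdot(\Lambda:L)=\Lambda$.

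\emph{Direction $\Leftarrow$.} Suppose $L_{(p)}=a_p\Lambda_{(p)}$ with $a_p\in A^{unit}$ for every $p$.

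1. Since $\Lambda_{(p)}$ is a ring with $1_A$ and $a_p$ is a unit, one checks directly that $\Lambda_{(p)}:a_p\Lambda_{(p)}=a_p^{-1}\Lambda_{(p)}$.
2. By Theorem \ref{t7.2} (a),
\begin{eqnarray*}
\bigl(L\cdot(\Lambda:L)\bigr)_{(p)}=L_{(p)}\cdot\bigl(\Lambda_{(p)}:L_{(p)}\bigr)=a_p\Lambda_{(p)}\cdot a_p^{-1}\Lambda_{(p)}=\Lambda_{(p)}
\end{eqnarray*}
for every $p\in\P$.
3. Since a full lattice is the intersection of its localizations (Theorem \ref{t7.2} (a)), this gives $L\cdot(\Lambda:L)=\Lambda$. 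So (iii) of Theorem \ref{t5.6} (c) holds, and $L$ is invertible.

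The condition $a_p=1_A$ for almost all $p$ is not needed in this direction.

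\emph{Direction $\Rightarrow$.} Suppose $L$ is invertible, so $L\cdot L^{-1}=\Lambda$. Localizing gives $L_{(p)}\cdot(L^{-1})_{(p)}=\Lambda_{(p)}$. For all but finitely many $p$ we have $L_{(p)}=\Lambda_{(p)}$ by Theorem \ref{t7.2} (b), and there we take $a_p=1_A$. It remains to fix one $p$ and show that $L_{(p)}$ is principal over $\Lambda_{(p)}$. This is the main step.

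1. $\Lambda_{(p)}$ is a finitely generated $\Z_{(p)}$-algebra. By Nakayama every maximal ideal contains $p$, so the maximal ideals of $\Lambda_{(p)}$ correspond to those of the finite $\F_p$-algebra $\Lambda_{(p)}/p\Lambda_{(p)}$. There are finitely many, $\mathfrak m_1,\dots,\mathfrak m_s$, and the Jacobson radical $J$ satisfies $\Lambda_{(p)}/J\cong\prod_j k_j$ with finite fields $k_j$.
2. Put $M:=L_{(p)}/JL_{(p)}=\bigoplus_j M_j$, where $M_j=L_{(p)}/\mathfrak m_jL_{(p)}$ is a $k_j$-vector space, and similarly $M'$ for $(L^{-1})_{(p)}$.
3. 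The multiplication $L_{(p)}\times(L^{-1})_{(p)}\to\Lambda_{(p)}$ is surjective. Hence it induces a surjective pairing $M_j\otimes M'_j\to k_j$, so $M_j\neq0$.
4. Symmetrically, if $\dim_{k_j} M_j\geq 2$, then from $L_{(p)}=L_{(p)}(L^{-1})_{(p)}L_{(p)}$ I expect to show that $M_j$ is generated by the images of single elements. I would make this precise by noting that $L_{(p)}\otimes_{\Lambda_{(p)}}(L^{-1})_{(p)}\to\Lambda_{(p)}$ is an isomorphism, because $L_{(p)}$ is invertible, hence projective of rank one locally. This gives $\dim M_j=1$.
5. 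Choose $a\in L_{(p)}$ whose image generates every $M_j$; this is possible by the Chinese remainder theorem. Then $a\Lambda_{(p)}+JL_{(p)}=L_{(p)}$, and Nakayama gives $L_{(p)}=a\Lambda_{(p)}$.
6. Finally $a$ is a unit of $A$: $a\Lambda_{(p)}=L_{(p)}$ spans $A$ over $\Q$, so multiplication by $a$ is surjective, hence bijective on $A$.

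The delicate point is the rank-one statement in step 4. If the tensor-product argument is awkward, the fallback is an elementwise argument: write $1=\sum_i x_iy_i$ with $x_i\in L_{(p)}$ and $y_i\in (L^{-1})_{(p)}$. For each $j$ some $x_iy_i\notin\mathfrak m_j$; combining these by the Chinese remainder theorem gives $x=\sum_j c_jx_{i_j}$ and $y=\sum_j d_jy_{i_j}$ with $xy$ a unit in $\Lambda_{(p)}$. Then
\begin{eqnarray*}
L_{(p)}=L_{(p)}\,y\,x\,(xy)^{-1}\subset\Lambda_{(p)}\,x\subset L_{(p)},
\end{eqnarray*}
so $L_{(p)}=x\Lambda_{(p)}$ directly, without Nakayama. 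The Chinese remainder step must be arranged so that cross terms $c_jd_{j'}x_{i_j}y_{i_{j'}}$ with $j\neq j'$ lie in $\mathfrak m_k$ for every $k$. I would arrange this by taking $c_j\equiv d_j\equiv\delta_{jk}$ modulo $\mathfrak m_k$.
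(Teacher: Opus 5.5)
Your proposal is correct, and it differs from the paper in both directions.

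\textbf{Direction $\Leftarrow$.} The paper glues the local modules $a_p^{-1}\Lambda_{(p)}$ into a full lattice $\widetilde{L}$ using Theorem \ref{t7.2} (c). It then checks $\widetilde{L}L=\Lambda$ and $\Lambda\widetilde{L}=\widetilde{L}$ prime by prime, which exhibits $L^{-1}$ explicitly; the same gluing pattern is reused in Theorems \ref{t8.2} and \ref{t9.2}. You instead combine the compatibility of the division map with localization (Theorem \ref{t7.2} (a)) with criterion (iii) of Theorem \ref{t5.6} (c). Your route needs neither the gluing theorem nor the clause $a_p=1_A$ for almost all $p$.

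\textbf{Direction $\Rightarrow$.} The paper gives no argument here and only refers to Faddeev \cite[$27^\circ$]{Fa65}. Your elementwise fallback is the standard proof that an invertible ideal in a semilocal ring is principal, and it is complete:
\begin{itemize}
\item the maximal ideals $\mathfrak m_1,\dots,\mathfrak m_s$ of $\Lambda_{(p)}$ are the preimages of those of $\Lambda/p\Lambda$;
\item the choice $c_j\equiv d_j\equiv\delta_{jk}$ modulo $\mathfrak m_k$ gives $xy\equiv x_{i_k}y_{i_k}\not\equiv 0$ modulo $\mathfrak m_k$ for every $k$, because all cross products lie in $L_{(p)}(L^{-1})_{(p)}=\Lambda_{(p)}$;
\item hence $xy\in\Lambda_{(p)}^{unit}$, and $L_{(p)}=x\Lambda_{(p)}$ with $x\in A^{unit}$.
\end{itemize}
This makes Theorem \ref{t7.3} self-contained modulo Theorem \ref{t7.2} (a),(b).

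\textbf{Two small corrections.}
\begin{itemize}
\item In step 1, Nakayama forces $p\in\mathfrak m$ only because $\Lambda_{(p)}$ is finitely generated as a $\Z_{(p)}$-\emph{module}; it is free of rank $\dim A$. Being a finitely generated algebra would not suffice, e.g.\ $\Z_{(p)}[1/p]=\Q$.
\item The justification in step 4 (``projective of rank one locally'') presupposes the local principality you are trying to prove. Either verify injectivity of $L_{(p)}\otimes_{\Lambda_{(p)}}(L^{-1})_{(p)}\to\Lambda_{(p)}$ directly from $1=\sum_i x_iy_i$, or drop steps 2--5 and present the elementwise argument as the proof.
\end{itemize}
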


In fact, in \cite[$27^\circ$]{Fa65} only the more difficult
implication $\Rightarrow$ is proved.

Proof of the implication $\Leftarrow$:
Apply Theorem \ref{t7.2} (c) to 
$U_p:=a_p^{-1}\cdot \Lambda_{(p)}$ for each $p\in\P$.
It gives a full lattice $\www{L}$ with
$(\www{L})_{(p)}=a_p^{-1}\cdot\Lambda_{(p)}$ for each
$p\in\P$, so
\begin{eqnarray*}
(\www{L}\cdot L)_{(p)}=\Lambda_{(p)}\quad
\textup{for each }p\in\P,\quad\textup{so }
\www{L}\cdot L=\Lambda,\\
(\Lambda\cdot\www{L})_{(p)}=a_p^{-1}\Lambda_{(p)}=(\www{L})_{(p)}
\textup{ for each }p\in\P,\quad\textup{so }\Lambda\cdot\www{L}=\www{L},
\end{eqnarray*}
so $L$ is invertible with $L^{-1}=\www{L}$.
\hfill$\Box$

\begin{remarks}\label{t7.4}
Let $A$ be as in Theorem \ref{t3.1}.
By Theorem \ref{t5.8} (a) two full lattices $L_1$ and $L_2$
are $w$-equivalent if and only if $\OO(L_1)=\OO(L_2)$
and an invertible lattice $L_3\in G(\OO(L_1))$ with 
$L_1L_3=L_2$ exists (in fact, then $L_3=L_2:L_1$ is unique).
By Theorem \ref{t7.3} this is equivalent to
\begin{eqnarray}\label{7.3}
(L_2)_{(p)} &=& a_p\cdot (L_1)_{(p)}\quad\textup{for each }
p\in\P
\end{eqnarray}
with $a_p=1_A$ for almost all $p\in\P$ and $a_p\in A^{unit}$
for the other $p\in\P$. Lattices $L_1$ and $L_2$ with 
\eqref{7.3} are called {\it locally equivalent}.
So, in our commutative situation $w$-equivalence and
local equivalence coincide
\cite[$28^\circ$]{Fa65}. 
\end{remarks}

The following characterization of elements of
$\Lambda_{(p)}^{unit}:=(\Lambda_{(p)})^{unit}$ 
will be used in the proof of Theorem \ref{t7.6}.

\begin{lemma}\label{t7.5}
Let $A$ be as in Theorem \ref{t3.1}. Let $\Lambda$ be an order
in $A$. Let $p\in\P$ be a prime number. For any $a\in A$ 
$\det(a)\in\Q$ denotes the determinant of the endomorphism
$(\mu_a:A\to A,\ b\mapsto ab)$ of $A$.

(a) Consider an element $a\in \Lambda_{(p)}$. Then
\begin{eqnarray*}
a\in \Lambda_{(p)}^{unit}&\iff& 
\det(a)\in\Z_{(p)}^{unit}.
\end{eqnarray*}

(b) Consider an element $a\in\Lambda$. Then
\begin{eqnarray*}
a\in \Lambda_{(p)}^{unit}&\iff&
a+p\Lambda\in \Lambda/p\Lambda\textup{ is in }
(\Lambda/p\Lambda)^{unit}.
\end{eqnarray*}
\end{lemma}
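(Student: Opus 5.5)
The plan is to compute with the matrix of $\mu_a$ in a $\Z$-basis of $\Lambda$ and to use Cayley--Hamilton. Choose a $\Z$-basis $(b_1,...,b_n)$ of $\Lambda$. It is also a $\Z_{(p)}$-basis of $\Lambda_{(p)}$, and it is a $\Q$-basis of $A$. Since $\Lambda_{(p)}$ is a subring of $A$ containing $1_A$, for $a\in\Lambda_{(p)}$ the endomorphism $\mu_a$ maps $\Lambda_{(p)}$ to itself. Its matrix $M_a$ in this basis therefore lies in $M_{n\times n}(\Z_{(p)})$, and $\det(a)=\det M_a\in\Z_{(p)}$. If $a\in\Lambda$, then $M_a\in M_{n\times n}(\Z)$ and $\det(a)\in\Z$.

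For (a), direction $\Rightarrow$: let $b\in\Lambda_{(p)}$ with $ab=1_A$. Then $\mu_a\mu_b=\mathrm{id}$, so $\det(a)\det(b)=1$ with both factors in $\Z_{(p)}$. Hence $\det(a)\in\Z_{(p)}^{unit}$.

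Direction $\Leftarrow$: let $\chi(t)=t^n+c_{n-1}t^{n-1}+...+c_0$ be the characteristic polynomial of $M_a$. Its coefficients lie in $\Z_{(p)}$, and $c_0=(-1)^n\det(a)$. By Cayley--Hamilton, $\chi(\mu_a)=0$. Applying this to $1_A$ gives $\chi(a)=0$ in $A$. Hence
$$a\cdot\bigl(a^{n-1}+c_{n-1}a^{n-2}+...+c_1 1_A\bigr)=-c_0 1_A.$$
If $\det(a)\in\Z_{(p)}^{unit}$, then $-c_0^{-1}\in\Z_{(p)}$. So $a^{-1}=-c_0^{-1}(a^{n-1}+...+c_1 1_A)$ lies in the ring $\Lambda_{(p)}$, and $a\in\Lambda_{(p)}^{unit}$.

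For (b), take $a\in\Lambda$. By (a), $a\in\Lambda_{(p)}^{unit}$ if and only if $\det(a)\in\Z\cap\Z_{(p)}^{unit}$, that is, if and only if $p\nmid\det M_a$. Reducing $M_a$ modulo $p$ gives the matrix of multiplication by $\bar a:=a+p\Lambda$ on the $n$-dimensional $\F_p$-algebra $\Lambda/p\Lambda$, in the basis given by the images of the $b_i$. So $p\nmid\det M_a$ holds if and only if multiplication by $\bar a$ is bijective on $\Lambda/p\Lambda$.

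It remains to see that this bijectivity is equivalent to $\bar a\in(\Lambda/p\Lambda)^{unit}$. If $\bar a$ is a unit, multiplication by its inverse is an inverse map, so multiplication by $\bar a$ is bijective. Conversely, if multiplication by $\bar a$ is bijective, then $1$ lies in its image, so some $\bar b$ satisfies $\bar a\bar b=1$. This uses only that $\Lambda/p\Lambda$ is a finite-dimensional commutative algebra with unit.

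No step is a real obstacle. The only points needing care are that $\Lambda_{(p)}$ is a ring containing $1_A$, so that the powers of $a$ stay in $\Lambda_{(p)}$, and that reducing modulo $p$ commutes with taking the determinant.
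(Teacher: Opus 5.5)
Your proof is correct and follows the paper's argument: in (a) you use multiplicativity of $\det$ for $\Rightarrow$ and Cayley--Hamilton with constant term $(-1)^n\det(a)$ for $\Leftarrow$, and in (b) you reduce via (a) to $p\nmid\det(a)$. Beyond the paper, you justify why $p\nmid\det(a)$ is equivalent to $a+p\Lambda$ being a unit, by reducing the matrix mod $p$ and using bijectivity of multiplication on the finite-dimensional $\F_p$-algebra, a step the paper only asserts; you also correctly place the coefficients of the characteristic polynomial in $\Z_{(p)}$, where the paper writes $\Z$.
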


{\bf Proof:}
(a) $\Rightarrow$: $a\in\Lambda_{(p)}$ implies 
$\det(a)\in\Z_{(p)}$, and $a^{-1}\in \Lambda_{(p)}$ implies
$(\det(a))^{-1}=\det(a^{-1})\in\Z_{(p)}$.

$\Leftarrow$: Suppose $\det(a)\in\Z_{(p)}^{unit}$. 
Denote by $p_{Ch,a}(t)\in\Z[t]$
the characteristic polynomial of the endomorphism
$(\mu_a:A\to A,\ b\mapsto ab)$. Then 
$p_{Ch,a}(a)=0$ by Cayley-Hamilton, and 
$t$ divides $p_{Ch,a}(t)-(-1)^n\det(a)$, so 
\begin{eqnarray*}
b&:=&\frac{p_{Ch,a}(a)-(-1)^n\det(a)}{a}=\frac{(-1)^{n+1}\det(a)}{a}
\in\Z[a]\subset\Lambda_{(p)},\\
a^{-1}&=& (-1)^{n+1}(\det(a))^{-1}\cdot b\in\Lambda_{(p)}.
\end{eqnarray*}

(b) Suppose $a\in\Lambda$. Then $\det(a)\in\Z$.
Then by part (a) 
$a\in\Lambda_{(p)}^{unit}\Leftrightarrow p\nmid \det(a)$.
This is equivalent to $a+p\Lambda\in (\Lambda/p\Lambda)^{unit}$.
\hfill$\Box$

\bigskip
In section \ref{s8} we will use Theorem \ref{t7.6}.
The crucial point is the surjectivity of the group homomorphism
in \eqref{7.6}.

\begin{theorem}\label{t7.6}
Let $A$ be as in Theorem \ref{t3.1}.
Let $\Lambda$ be an order, and let $L\subset\Lambda$
be a $\Lambda$-ideal (not necessarily exact).
The set $P_0:=\{p\in\P\,|\, \Lambda_{(p)}\neq L_{(p)}\}$
is finite by Theorem \ref{t7.2} (b). 

(a) The quotient $\Lambda/L$ is a finite commutative ring
with unit element. For each $p\in P_0$
the quotient $\Lambda_{(p)}/L_{(p)}$ is a finite commutative
ring with unit element. There is a natural isomorphism
\begin{eqnarray}\label{7.4}
\Lambda/L&\to& \prod_{p\in P_0}
\Lambda_{(p)}/L_{(p)}
\end{eqnarray}
of finite commutative rings with unit elements.
It induces an isomorphism
\begin{eqnarray}\label{7.5}
(\Lambda/L)^{unit}&\to& \prod_{p\in P_0}
(\Lambda_{(p)}/L_{(p)})^{unit}
\end{eqnarray}
of finite commutative multiplicative groups
(of units in the rings).

(b) Fix $p\in P_0$ and an element $a_1\in\Lambda_{(p)}$ with $a_1+L_{(p)}\in (\Lambda_{(p)}/L_{(p)})^{unit}$. 
There is an element 
$a_3\in\Lambda\cap \Lambda_{(p)}^{unit}$ with 
\begin{eqnarray*}
a_3+L&\in& (\Lambda/L)^{unit},\\
a_3+L_{(p)}&=& a_1+L_{(p)},\\
a_3+L_{(q)}&=& 1_A+L_{(q)}\quad\textup{for }q\in P_0-\{p\},
\end{eqnarray*}
so the image of $a_3+L\in (\Lambda/L)^{unit}$ under the 
isomorphism in \eqref{7.5} is $a_1+L_{(p)}$ in the factor
$(\Lambda_{(p)}/L_{(p)})^{unit}$ and the unit element in 
each other factor $(\Lambda_{(q)}/L_{(q)})^{unit}$.
Especially, the group homomorphism
\begin{eqnarray}\label{7.6}
\Lambda_{(p)}^{unit}&\to& (\Lambda_{(p)}/L_{(p)})^{unit}.
\end{eqnarray}
is surjective.
\end{theorem}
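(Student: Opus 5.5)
Part (a) is a Chinese-remainder statement, which I derive from Theorem \ref{t7.2}. First, $L$ is an ideal of the ring $\Lambda$, because $\Lambda L\subset L$. Hence $\Lambda/L$ is a finite commutative ring with unit, finite since $L$ is a full lattice. The same argument applies to $\Lambda_{(p)}/L_{(p)}$: its finiteness follows from \eqref{7.1}, and $\Lambda_{(p)}L_{(p)}=(\Lambda L)_{(p)}\subset L_{(p)}$ by Theorem \ref{t7.2} (a). The map in \eqref{7.4} sends $a+L$ to $(a+L_{(p)})_{p\in P_0}$. It is well defined, since $L\subset L_{(p)}$, and it is a ring homomorphism. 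By Theorem \ref{t7.2} (d) it is the natural isomorphism of additive groups, so it is a ring isomorphism. The isomorphism \eqref{7.5} then follows formally, because the units of a product ring are the products of the units of the factors.

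For part (b) I would first reduce from $\Lambda_{(p)}$ to $\Lambda$. Choose $N\in\N$ with $p\nmid N$ such that $Na_1\in\Lambda$. Then $a_2:=N^{\prime}Na_1$ has the same class modulo $L_{(p)}$ as $a_1$, where $N^{\prime}\in\Z$ satisfies $NN^{\prime}\equiv 1$ modulo a high power $p^e$ with $p^e\Lambda_{(p)}\subset L_{(p)}$. Such an $e$ exists because $\Lambda_{(p)}/L_{(p)}$ is a finite $p$-group, by the structure in \eqref{7.1}. So I may assume $a_1\in\Lambda$.

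Using the isomorphism \eqref{7.4}, I pick $a\in\Lambda$ whose image is $a_1+L_{(p)}$ in the $p$-factor and $1_A+L_{(q)}$ in every other factor $q\in P_0-\{p\}$. Then $a+L$ is a unit in $\Lambda/L$ by \eqref{7.5}. It remains to change $a$ by an element of $L$ so that the result also lies in $\Lambda_{(p)}^{unit}$. This is the main point, because a unit modulo $L_{(p)}$ need not be a unit in $\Lambda_{(p)}$ when $L_{(p)}$ is not contained in $p\Lambda_{(p)}$.

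My plan for this step is the following. By Lemma \ref{t7.5} (b), it suffices that $a+p\Lambda$ be a unit in the finite ring $\Lambda/p\Lambda$. Put $\overline{\Lambda}:=\Lambda/p\Lambda$ and let $\overline{L}$ be the image of $L$, an ideal of $\overline{\Lambda}$. Apply the structure theorem (Theorem \ref{t3.1} with Remark \ref{t3.2} (i), over $\F_p$) to the finite commutative $\F_p$-algebra $\overline{\Lambda}$. It decomposes as $\bigoplus_j B^{(j)}$ into local algebras with idempotents $e_j$. The ideal $\overline{L}$ splits accordingly, and each component $\overline{L}e_j$ is either all of $B^{(j)}$ or lies in its maximal ideal. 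We know $a+L$ is a unit in $\Lambda/L$. Together with $p$-locality, this says that $\bar a e_j$ is a unit for those $j$ with $\overline{L}e_j$ proper: the $\overline{L}e_j$-proper part of $\overline\Lambda/\overline L$ is a quotient of $\Lambda_{(p)}/L_{(p)}$, on which $a\equiv a_1$ is a unit, modulo nilpotents. For the $j$ with $\overline{L}e_j=B^{(j)}$, choose $\ell\in L$ whose image is $\sum_{j:\ \overline{L}e_j=B^{(j)}}(e_j-\bar a e_j)$. Then $a_3:=a+\ell$ has $\bar a_3$ a unit in $\overline{\Lambda}$, so $a_3\in\Lambda_{(p)}^{unit}$ by Lemma \ref{t7.5} (b). Since $\ell\in L\subset L_{(q)}$ for all $q$, the classes of $a_3$ modulo $L$ and modulo each $L_{(q)}$ are unchanged. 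The surjectivity of \eqref{7.6} follows immediately. I expect the justification that $\bar ae_j$ is a unit on the proper components to need the most care. I would argue via nilpotence of the maximal ideal of $B^{(j)}$: an element of $B^{(j)}$ is a unit as soon as it is a unit modulo a proper ideal, since that ideal lies in the nilpotent maximal ideal.
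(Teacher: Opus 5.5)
Your proposal is correct and follows essentially the same route as the paper. Part (a) comes from the natural additive isomorphism of Theorem \ref{t7.2} (d). For (b), you take a preimage $a\in\Lambda$ and correct it by an element of $L$ that repairs the local components of $\Lambda/p\Lambda$ on which the image of $L$ is everything, then conclude with Lemma \ref{t7.5} (b). The only differences are cosmetic. Your preliminary reduction to $a_1\in\Lambda$ is unnecessary, since surjectivity of \eqref{7.4} already gives $a\in\Lambda$. On the proper components, the paper verifies the unit property through the relation $c_j\tilde c_j=1$, where you use the equally valid fact that a unit modulo an ideal inside the nilpotent maximal ideal is a unit.
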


{\bf Proof:}
(a) By Theorem \ref{7.2} (d) there is a natural map as in 
\eqref{7.4} which is an isomorphism between finite additive
groups. Obviously for $a,b\in L$
\begin{eqnarray*}
\Lambda/L\owns (a+L)(b+L)\mapsto
\prod_{p\in P_0}\Bigl((a+L_{(p)})(b+L_{(p)})\Bigr)\\
=\Bigl(\prod_{p\in P_0}(a+L_{(p)})\Bigr)\cdot
\Bigl(\prod_{p\in P_0}(b+L_{(p)})\Bigr)
\in\prod_{p\in P_0}\Lambda_{(p)}/L_{(p)},
\end{eqnarray*}
so it is an isomorphism between finite commutative rings
with unit elements.

(b) Consider an element $a_1\in \Lambda_{(p)}$ with 
$a_1+L_{(p)}\in (\Lambda_{(p)}/L_{(p)})^{unit}$. 
Because of the isomorphism in \eqref{7.5} we can choose
an element $a_2\in\Lambda$ with 
\begin{eqnarray*}
a_2+L&\in& (\Lambda/L)^{unit},\\
a_2+L_{(p)}&=& a_1+L_{(p)}\quad\textup{and}\\
a_2+L_{(q)}&=& 1_A+L_{(q)}\textup{ for }q\in  P_0-\{p\}.
\end{eqnarray*}
We want to find an element $a_3\in a_2+L(\subset\Lambda)$
with $a_3\in \Lambda_{(p)}^{unit}$. 
This is surprisingly nontrivial. 

We will use Lemma \ref{t7.5} (b) and the quotient ring
$\Lambda/p\Lambda$. We have to discuss its structure.
The quotient $\Lambda/p\Lambda$ is a finite dimensional
commutative $\F_p$-algebra with unit element.
The finite field $\F_p=\Z/p\Z$
is a perfect field. By Remark \ref{t3.2} (i), 
Theorem \ref{t3.1} applies to $\Lambda/p\Lambda$  
instead of $A$ and the field $\F_p$ instead of the
field $\Q$. We write the decompositions
in Theorem \ref{t3.1} for $\Lambda/p\Lambda$ as
\begin{eqnarray}\label{7.7}
\Lambda/p\Lambda &=& 
\bigoplus_{j=1}^{\www{k}} A_{\Lambda,p}^{(j)},\qquad
A_{\Lambda,p}^{(j)}\ =\ F_{\Lambda,p}^{(j)}\oplus
N_{\Lambda,p}^{(j)}.
\end{eqnarray}
Here $\www{k}\in\N$, 
$A_{\Lambda,p}^{(1)},...,A_{\Lambda,p}^{\www{k}}$ are 
irreducible and local $\F_p$-algebras with
$A_{\Lambda,p}^{(i)}\cdot A_{\Lambda,p}^{(j)}=\{0\}$ for
$i\neq j$. $N_{\Lambda,p}^{(j)}$ is the maximal ideal
in $A_{\Lambda,p}^{(j)}$ and consists of nilpotent elements.
$F_{\Lambda,p}^{(j)}$ is an $\F_p$-subalgebra of
$A_{\Lambda,p}^{(j)}$ and a finite field extension of $\F_p$.
$A_{\Lambda,p}^{(j)}$ is an $F_{\Lambda,p}^{(j)}$-algebra.

An element $b=\sum_{j=1}^{\www{k}}(c_j+n_j)\in \Lambda/p\Lambda$
with $c_j\in F_{\Lambda,p}^{(j)}$ and 
$n_j\in N_{\Lambda,p}^{(j)}$ is a unit if and only
if each $c_j\neq 0$. 

The $\Lambda$-ideal $L$ induces an ideal 
$\www{L}:=(L+p\Lambda)/p\Lambda\subset\Lambda/p\Lambda$
in the ring $\Lambda/p\Lambda$. It satisfies
$$\www{L}= \bigoplus_{j=1}^{\www{k}} \www{L}^{(j)}\quad
\textup{where }\www{L}^{(j)}:=\www{L}\cap A_{\Lambda,p}^{(j)},$$
and $\www{L}^{(j)}$ is an ideal in the ring 
$A_{\Lambda,p}^{(j)}$. Either $\www{L}^{(j)}$
is a proper ideal, then 
$\www{L}^{(j)}\subset N_{\Lambda,p}^{(j)}$,
or $\www{L}^{(j)}=A_{\Lambda,p}^{(j)}$. 
Denote by $J_1\subset\{1,...,\www{k}\}$ the set of indices $j$
with $\www{L}^{(j)}=A_{\Lambda,p}^{(j)}$. 

Denote by $1_{A_{\Lambda,p}^{(j)}}$ the unit element in the ring
$A_{\Lambda,p}^{(j)}$. For $j\in J_1$ it is in 
$\www{L}^{(j)}\subset \www{L}$. 
For $j\in J_1$ choose an element $e_j\in L$ which maps
under the surjective map
$$L\to \www{L}=\bigoplus_{j=1}^{\www{k}}\www{L}^{(j)}$$
to $1_{A_{\Lambda,p}^{(j)}}$. 

Consider the element $a_2\in \Lambda$ chosen above.
Because $a_2+L\in (\Lambda/L)^{unit}$, an element
$\www{a}_2\in\Lambda$ with 
$a_2\www{a}_2+L=(a_2+L)(\www{a}_2+L)=1_A+L$ can be chosen.
Write the images of $a_2$ and $\www{a}_2$ in $\Lambda/p\Lambda$ as
\begin{eqnarray*}
a_2+p\Lambda=\sum_{j=1}^{\www{k}}(c_j+n_j)\quad\textup{and}\quad
\www{a}_2+p\Lambda=\sum_{j=1}^{\www{k}}(\www{c}_j+\www{n}_j)\\
\textup{with}\quad c_j,\www{c}_j\in F_{\Lambda,p}^{(j)}
\quad\textup{and}\quad n_j,\www{n}_j\in N_{\Lambda,p}^{(j)}.
\end{eqnarray*}
Then
\begin{eqnarray*}
a_2\www{a}_2+p\Lambda =\sum_{j=1}^{\www{k}}\Bigl(
c_j\www{c}_j+(c_j\www{n}_j+n_j\www{c}_j+n_j\www{n}_j)\Bigr)\\
\textup{with }c_j\www{c}_j\in F_{\Lambda,p}^{(j)}
\quad\textup{and}\quad 
c_j\www{n}_j+n_j\www{c}_j+n_j\www{n}_j\in N_{\Lambda,p}^{(j)}.
\end{eqnarray*}
The equality $a_2\www{a}_2+L=1_A+L$ induces in 
$\Lambda/p\Lambda$ the inclusion
\begin{eqnarray*}
a_2\www{a}_2+p\Lambda \in 
\Bigl(\sum_{j=1}^{\www{k}}1_{A_{\Lambda,p}^{(j)}}\Bigr)+\www{L}
=\sum_{j=1}^{\www{k}}\Bigl(
1_{A_{\Lambda,p}^{(j)}}+\www{L}^{(j)}\Bigr).
\end{eqnarray*}
Therefore for $j\in\{1,...,\www{k}\}-J_1$
$$c_j\www{c}_j= 1_{A_{\Lambda,p}^{(j)}},\quad \textup{so}\quad
c_j\in F_{\Lambda,p}^{(j)}-\{0\}.$$
Define by $J_2\subset J_1$ the set of indices $j$ with 
$c_j=0$. Define
$$a_3:=a_2+\sum_{j\in J_2}e_j\in a_2+L\subset \Lambda.$$
Then $a_3+p\Lambda\in \Lambda/p\Lambda$ has for each
$j\in\{1,...,\www{k}\}$ a nonvanishing part $c_j$ or 
$1_{A_{\Lambda,p}^{(j)}}$
in $F_{\Lambda,p}^{(j)}$. Therefore 
$a_3+p\Lambda\in (\Lambda/p\Lambda)^{unit}$,
so by Lemma \ref{7.5} (b) $a_3\in \Lambda_{(p)}^{unit}$.
\hfill$\Box$

\begin{remarks}\label{t7.7} If in the proof of Theorem \ref{t7.6}
$L\subset p\Lambda$ then $J_1=\emptyset$ and $a_3=a_2$ in the proof.
Then any preimage in $\Lambda$ of a class in 
$(\Lambda/L)^{unit}$ is in $\Lambda_{(p)}^{unit}$.
The inclusion $L\subset p\Lambda$ holds often, but not always.
\end{remarks}

\section{Exact sequences for an order and a smaller order}
\label{s8}
\setcounter{equation}{0}
\setcounter{table}{0}

Throughout the whole paper $A$ is as in Theorem \ref{t3.1}, 
so $A$ is a finite dimensional commutative $\Q$-algebra 
with unit element $1_A$.

Formula \eqref{5.3} can be improved if there 
$\OO(L_1)\supset \OO(L_2)$ and $L_2$ is invertible. 
This is subject of Lemma \ref{t8.1} (a).

\begin{lemma}\label{t8.1}

(a) Let $L_1,L_2\in\LL(A)$ with $\OO(L_1)\supset \OO(L_2)$
and $L_2$ invertible. Then 
\begin{eqnarray*}
\OO(L_1L_2)=\OO(L_1),\quad L_1:L_2=L_1L_2^{-1},\quad 
\OO(L_1:L_2)=\OO(L_1).
\end{eqnarray*}

(b) Let $\Lambda_1$ and $\Lambda_2\in\LL(A)$ be two orders 
with $\Lambda_2\subsetneqq \Lambda_1$. 
If $L\in G(\Lambda_2)$ then $\Lambda_1L\in G(\Lambda_1)$.
This leads to group homomorphisms
\begin{eqnarray}
G(\Lambda_2)\to G(\Lambda_1), &&L\mapsto \Lambda_1 L,
\label{8.1}\\
G([\Lambda_2]_\varepsilon)\to G([\Lambda_1]_\varepsilon),&& 
[L]_\varepsilon \mapsto [\Lambda_1 L]_\varepsilon.\label{8.2}
\end{eqnarray}
\end{lemma}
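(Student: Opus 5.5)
Part (a) comes first, since (b) is built on it. We work throughout with the identities of Lemma \ref{t5.3} and the description of invertibility in Theorem \ref{t5.6} (c). Write $\Lambda_1:=\OO(L_1)$ and $\Lambda_2:=\OO(L_2)$, with $\Lambda_2\subset\Lambda_1$. Since $L_2$ is invertible, $L_2L_2^{-1}=\Lambda_2$ and $\Lambda_2L_2^{-1}=L_2^{-1}$.

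We begin with the quotient. Formula \eqref{5.5} gives $L_1:L_2=(L_1:\Lambda_2)L_2^{-1}$. Because $\Lambda_2\subset\Lambda_1$ and $1_A\in\Lambda_2$, we have $L_1:\Lambda_2=L_1$: the inclusion $\supset$ holds since $L_1\Lambda_2\subset L_1\Lambda_1=L_1$, and $\subset$ holds since $a\Lambda_2\subset L_1$ forces $a=a1_A\in L_1$. Hence $L_1:L_2=L_1L_2^{-1}$.

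For the orders, \eqref{5.3} gives $\OO(L_1L_2)\supset\Lambda_1\Lambda_2=\Lambda_1$. Conversely, if $x\in\OO(L_1L_2)$, multiply $xL_1L_2\subset L_1L_2$ by $L_2^{-1}$ and use $L_1\Lambda_2=L_1$ to get $xL_1\subset L_1$. So $\OO(L_1L_2)=\Lambda_1$. Applying the same argument to $L_1L_2^{-1}$, which is legitimate because $L_2^{-1}$ is invertible with $\OO(L_2^{-1})=\Lambda_2$ by Theorem \ref{t5.6}, yields $\OO(L_1:L_2)=\Lambda_1$.

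For (b), take $L\in G(\Lambda_2)$, so $\OO(L)=\Lambda_2$ and $LL^{-1}=\Lambda_2$. Applying (a) with $L_1=\Lambda_1$ and $L_2=L$ gives $\OO(\Lambda_1L)=\Lambda_1$. Moreover $(\Lambda_1L)(\Lambda_1L^{-1})=\Lambda_1\Lambda_2=\Lambda_1$. By Theorem \ref{t5.6} (c)(ii), $\Lambda_1L$ is invertible with $e_{\Lambda_1L}=\Lambda_1$, i.e.\ $\Lambda_1L\in G(\Lambda_1)$.

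The map \eqref{8.1} is then a homomorphism, since $\Lambda_1(LL')=(\Lambda_1L)(\Lambda_1L')$ by $\Lambda_1\Lambda_1=\Lambda_1$; alternatively this is the map $a\mapsto ae_2$ of Theorem \ref{t4.2} (e) with $e_2=\Lambda_1$. The map \eqref{8.2} is well defined because $a\in A^{unit}$ gives $\Lambda_1(aL)=a\Lambda_1L$. By Lemma \ref{t1.2} (b) and Theorem \ref{t5.6}, $G([\Lambda_2]_\varepsilon)$ consists exactly of the classes $[L]_\varepsilon$ with $L\in G(\Lambda_2)$, so \eqref{8.2} is defined on all of $G([\Lambda_2]_\varepsilon)$ and is a homomorphism by the same computation.

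The only step needing real care is showing $\OO(L_1L_2)\subset\OO(L_1)$, i.e.\ cancelling $L_2$. This is exactly where invertibility of $L_2$ and the hypothesis $\Lambda_2\subset\Lambda_1$ (giving $L_1\Lambda_2=L_1$) are both used.
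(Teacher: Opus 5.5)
Your proof is correct and follows essentially the same route as the paper. It uses the cancellation $xL_1=xL_1L_2L_2^{-1}\subset L_1L_2L_2^{-1}=L_1$ to get $\OO(L_1L_2)\subset\OO(L_1)$, obtains $\OO(L_1:L_2)$ by applying this to $L_2^{-1}$, and settles (b) via $(\Lambda_1L)L^{-1}=\Lambda_1$. Your only cosmetic departures are deriving $L_1:L_2=L_1L_2^{-1}$ from \eqref{5.5} together with $L_1:\Lambda_2=L_1$ rather than proving both inclusions directly, and getting $\OO(\Lambda_1L)=\Lambda_1$ from part (a) rather than from $e_{\Lambda_1L}=\OO(\Lambda_1L)$.
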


{\bf Proof:}
(a) Suppose $a\in\OO(L_1L_2)$. Then 
\begin{eqnarray*}
aL_1=aL_1\OO(L_2)=aL_1L_2L_2^{-1}\subset L_1L_2L_2^{-1}
=L_1\OO(L_2)=L_1,
\end{eqnarray*}
so $a\in\OO(L_1)$, so $\OO(L_1L_2)\subset\OO(L_1)$. 
Equality follows with  \eqref{5.3}. 

By definition of $L_1:L_2$, we have $(L_1:L_2)L_2\subset L_1$.
This gives $\subset$ in the next formula, the first 
$=$ comes from \eqref{5.3},
\begin{eqnarray*}
L_1:L_2 = (L_1:L_2)\OO(L_2)=(L_1:L_2)L_2L_2^{-1}\subset
L_1L_2^{-1}.
\end{eqnarray*}
$L_1:L_2\supset L_1L_2^{-1}$ is true because $\OO(L_2)\subset
\OO(L_1)$. We obtain $L_1:L_2=L_1L_2^{-1}$. 
Finally $\OO(L_1L_2^{-1})=\OO(L_1)$ because $L_2^{-1}$ is
invertible with $\OO(L_2^{-1})=\OO(L_2)\subset \OO(L_1)$. 

(b) $(\Lambda_1 L)L^{-1}=\Lambda_1\Lambda_2=\Lambda_1$ and 
$\Lambda_1(\Lambda_1 L)=\Lambda_1 L$ show that $\Lambda_1 L$ is 
invertible with $\OO(\Lambda_1 L)=\Lambda_1$.
Therefore the maps in \eqref{8.1} and \eqref{8.2} are well
defined. Obviously they are group homomorphisms. \hfill$\Box$

\bigskip
Theorem \ref{t8.2} treats the group homomorphisms in 
\eqref{8.1} and \eqref{8.2}. 
They turn out to be surjective. They are part of the
exact sequences in \eqref{8.3} and \eqref{8.7}. Theorem \ref{t8.2}
uses the localizations $\Z\dashrightarrow\Z_{(p)}$ in 
section \ref{s7}. It has similarities to and generalizes results
in \cite[Ch. I \S 12]{Ne99}. The Remarks \ref{t8.3}
say more on this.

\begin{theorem}\label{t8.2}
Let $\Lambda_1$ and $\Lambda_2$ be two orders in $A$ with
$\Lambda_2\subsetneqq\Lambda_1$. The full lattice 
$C:=\Lambda_2:\Lambda_1$ is called {\sf conductor} of the
pair $(\Lambda_1,\Lambda_2)$.
The set $P_0:=\{p\in\P\,|\, (\Lambda_1)_{(p)}
\neq (\Lambda_2)_{(p)}\}$ is finite by Theorem \ref{t7.2} (b).

(a) $\OO(C)\supset\Lambda_1$. The conductor $C$ is the biggest
$\Lambda_1$-ideal in $\Lambda_2$.

(b) The following sequence is exact,
\begin{eqnarray}\label{8.3}
1\to \prod_{p\in P_0}(\Lambda_2)_{(p)}^{unit} \to 
\prod_{p\in P_0}(\Lambda_1)_{(p)}^{unit} \to G(\Lambda_2)
\to G(\Lambda_1)\to 1.
\end{eqnarray}
Especially, the group homomorphism in \eqref{8.1} is surjective.
Here the image $L$ in $G(\Lambda_2)$ of a tuple 
$(a_p)_{p\in P_0}\in \prod_{p\in P_0}(\Lambda_1)_{(p)}^{unit}$
is given as follows. 
Define $a_p:=1_A$ for $p\in\P-P_0$. By Theorem \ref{t7.2} (c)
there is a unique full lattice $L$ in $A$ with
$L_{(p)}=a_p(\Lambda_2)_{(p)}$ for each $p\in\P$, namely
$L=\bigcap_{p\in\P}a_p(\Lambda_2)_{(p)}$. 

(c) There is a natural isomorphism of groups
\begin{eqnarray}\label{8.4}
(\Lambda_1/C)^{unit}/(\Lambda_2/C)^{unit}&\to& 
\bigoplus_{p\in P_0}(\Lambda_1)_{(p)}^{unit}/
(\Lambda_2)_{(p)}^{unit}.
\end{eqnarray}

(d) The isomorphism of groups
\begin{eqnarray}\label{8.5}
(\Lambda_1/C)^{unit}/(\Lambda_2/C)^{unit} &\to& 
\ker\bigl(G(\Lambda_2)\to G(\Lambda_1)\bigr)
\end{eqnarray}
which results from (b) and (c) is given by
\begin{eqnarray}\label{8.6}
((a+C)\mmod (\Lambda_2/C)^{unit})&\mapsto& C+a\Lambda_2,
\end{eqnarray}
where $a\in\Lambda_1$, $a+C\in (\Lambda_1/C)^{unit}
\subset \Lambda_1/C$. 

(e) The maps in the following sequence are the natural ones,
the sequence is exact,
\begin{eqnarray}\label{8.7}
1\to \Lambda_2^{unit}\to \Lambda_1^{unit}\to 
\prod_{p\in P_0}\frac{(\Lambda_1)_{(p)}^{unit}}
{(\Lambda_2)_{(p)}^{unit}}
\to G([\Lambda_2]_\varepsilon)\to G([\Lambda_1]_\varepsilon)
\to 1.
\end{eqnarray}
Especially, the group homomorphism in \eqref{8.2} is surjective.

(f) The group $\Lambda_2^{unit}$ has finite index in the group
$\Lambda_1^{unit}$.
By Theorem \ref{t6.5} the groups $G([\Lambda_2]_\varepsilon)$
and $G[\Lambda_1]_\varepsilon)$ are finite. The size of one of them
can be calculated by the size of the other one with the following
formula, 
\begin{eqnarray}\label{8.8}
\frac{|G([\Lambda_2]_\varepsilon)|}{|G([\Lambda_1]_\varepsilon)|}
&=& \frac{|(\Lambda_1/C)^{unit}|}{|(\Lambda_2/C)^{unit}|}
\cdot \frac{1}{[\Lambda_1^{unit}:\Lambda_2^{unit}]}.
\end{eqnarray}
\end{theorem}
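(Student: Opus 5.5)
We work prime by prime, using the localizations of section \ref{s7} together with Theorem \ref{t7.3}, and derive the global statements from the local ones.

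\textbf{Part (a).} $\Lambda_1C\Lambda_1=\Lambda_1C\subset\Lambda_2$ holds because $C\Lambda_1\subset\Lambda_2$ and $\Lambda_1\Lambda_1=\Lambda_1$. Hence $\Lambda_1C\subset C$, so $\OO(C)\supset\Lambda_1$. Now let $I\subset\Lambda_2$ be any $\Lambda_1$-ideal. Then $I\Lambda_1=I\subset\Lambda_2$, so $I\subset C$. Thus $C$ is the biggest $\Lambda_1$-ideal in $\Lambda_2$.

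\textbf{Part (b).} We use Theorem \ref{t7.3} to describe $G(\Lambda_2)$. An element $L\in G(\Lambda_2)$ has the form $L_{(p)}=a_p(\Lambda_2)_{(p)}$ with $a_p\in A^{unit}$, and the $a_p$ are determined up to $(\Lambda_2)_{(p)}^{unit}$. The map $L\mapsto\Lambda_1L$ sends this to $a_p(\Lambda_1)_{(p)}$, by Theorem \ref{t7.2} (a).

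\emph{Surjectivity.} Take $L'\in G(\Lambda_1)$ with local generators $a_p$. Put $L:=\bigcap_p a_p(\Lambda_2)_{(p)}$; this is a full lattice by Theorem \ref{t7.2} (c), since $a_p=1$ for almost all $p$. It lies in $G(\Lambda_2)$ by the implication $\Leftarrow$ of Theorem \ref{t7.3}, because $\OO(L)_{(p)}=(\Lambda_2)_{(p)}$ locally. Moreover $\Lambda_1L=L'$.

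\emph{Kernel.} $\Lambda_1L=\Lambda_1$ means $a_p\in(\Lambda_1)_{(p)}^{unit}$ for every $p$. For $p\notin P_0$ we have $(\Lambda_1)_{(p)}=(\Lambda_2)_{(p)}$, so we may take $a_p=1$ there. This gives the map $\prod_{p\in P_0}(\Lambda_1)_{(p)}^{unit}\to G(\Lambda_2)$ described in the statement. Its kernel consists of the tuples with $a_p(\Lambda_2)_{(p)}=(\Lambda_2)_{(p)}$, that is, $a_p\in(\Lambda_2)_{(p)}^{unit}$. This proves exactness of \eqref{8.3}.

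\textbf{Part (c).} Apply Theorem \ref{t7.6} (a) to $L=C$ and both orders; this is possible since $C\subset\Lambda_2\subset\Lambda_1$ and $C$ is a $\Lambda_1$-ideal. It gives $(\Lambda_i/C)^{unit}\cong\prod_{p}((\Lambda_i)_{(p)}/C_{(p)})^{unit}$.

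The primes with $C_{(p)}\ne(\Lambda_1)_{(p)}$ are exactly those in $P_0$, because localization commutes with $:$. So the factors for primes outside $P_0$ are trivial in the quotient.

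For $p\in P_0$ we claim
$$((\Lambda_1)_{(p)}/C_{(p)})^{unit}/((\Lambda_2)_{(p)}/C_{(p)})^{unit}\cong(\Lambda_1)_{(p)}^{unit}/(\Lambda_2)_{(p)}^{unit}.$$
\begin{itemize}
\item Surjectivity of $(\Lambda_1)_{(p)}^{unit}\to((\Lambda_1)_{(p)}/C_{(p)})^{unit}$ is exactly \eqref{7.6}.
\item Injectivity: suppose $u\in(\Lambda_1)_{(p)}^{unit}$ has image in $(\Lambda_2)_{(p)}/C_{(p)}$. Then $u\in(\Lambda_2)_{(p)}+C_{(p)}=(\Lambda_2)_{(p)}$. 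Its inverse $u^{-1}$ also lies in $(\Lambda_2)_{(p)}$: modulo $C_{(p)}$ it equals an element of $(\Lambda_2)_{(p)}/C_{(p)}$, namely the inverse there. Hence $u\in(\Lambda_2)_{(p)}^{unit}$.
\end{itemize}

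\textbf{Part (d).} Take $a\in\Lambda_1$ with $a+C$ a unit. By Theorem \ref{t7.6} (b) we may choose $a$ so that it is a local unit at the relevant prime; for the other primes we use the factor-by-factor description.

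Then $C+a\Lambda_2$ localizes at $p\in P_0$ to $a(\Lambda_2)_{(p)}$. The reason is that $C_{(p)}\subset a(\Lambda_2)_{(p)}$, since $C_{(p)}=aC_{(p)}$ for $a\in(\Lambda_1)_{(p)}^{unit}$. At $p\notin P_0$ it localizes to $(\Lambda_2)_{(p)}$.

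So the lattice $C+a\Lambda_2$ matches the image under (b) of the tuple $(a)_p$. One also checks that it does not depend on the choice of representative $a$.

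\textbf{Part (e).} Pass to $\varepsilon$-classes. The kernel of $G([\Lambda_2]_\varepsilon)\to G([\Lambda_1]_\varepsilon)$ consists of the classes of those $L$ with $\Lambda_1L=b\Lambda_1$ for some $b\in A^{unit}$. Replacing $L$ by $b^{-1}L$ reduces to the kernel from (b). Two tuples give $\varepsilon$-equivalent lattices exactly when they differ by a global $u\in A^{unit}$ with $u\Lambda_1=\Lambda_1$, that is, $u\in\Lambda_1^{unit}$ (diagonally embedded).

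The kernel of $\Lambda_1^{unit}\to\prod_{p\in P_0}(\Lambda_1)_{(p)}^{unit}/(\Lambda_2)_{(p)}^{unit}$ consists of the units lying in $(\Lambda_2)_{(p)}$ for all $p\in P_0$, and hence for all $p$. By Theorem \ref{t7.2} (a) these lie in $\Lambda_2$, and likewise their inverses, giving $\Lambda_2^{unit}$. Surjectivity at the right end follows from (b).

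\textbf{Part (f).} By (c), the middle term of \eqref{8.7} is finite. Exactness then gives that $[\Lambda_1^{unit}:\Lambda_2^{unit}]$ is finite, and counting orders along \eqref{8.7} yields \eqref{8.8}.

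\textbf{Main obstacle.} The delicate step is the local surjectivity \eqref{7.6}, which is already supplied by Theorem \ref{t7.6}. Beyond that, the care needed is in verifying (b) via Theorem \ref{t7.3}: we must show that $\OO(L)=\Lambda_2$ exactly, which follows locally from $\OO(a_p(\Lambda_2)_{(p)})=(\Lambda_2)_{(p)}$.
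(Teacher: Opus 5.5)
Your proposal is correct and follows essentially the same route as the paper. That route is: localization plus Theorem \ref{t7.3} for (b); Theorem \ref{t7.6} applied to $(\Lambda_1,C)$ and $(\Lambda_2,C)$ for (c); locally invertible representatives of $a+C$ for (d); and, for (e), reducing the kernel on $\varepsilon$-classes to the kernel in (b).

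Your small shortcuts are valid. In (c) you get injectivity from $C_{(p)}\subset(\Lambda_2)_{(p)}$ and uniqueness of inverses. In (e) you read off $u\in\Lambda_1^{unit}$ directly from $u\Lambda_1=\Lambda_1$. In (d) you should state explicitly, as the paper does, that each $p\in P_0$ may need its own representative $d_p\in a+C$ with $d_p\in(\Lambda_1)_{(p)}^{unit}$, and that $C+a\Lambda_2=C+d_p\Lambda_2$ because $C\Lambda_2\subset C$.
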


{\bf Proof:}
(a) $\OO(C)\supset\Lambda_1$ follows from \eqref{5.3}.
So $C$ is a $\Lambda_1$-ideal. 
Any $\Lambda_1$-ideal in $\Lambda_2$ is contained in
$\Lambda_2:\Lambda_1=C$. On the other hand,  
$1_A\in \Lambda_1$ implies 
$C=C\cdot 1_A\subset C\cdot \Lambda_1\subset\Lambda_2$.
So $C$ is a $\Lambda_1$-ideal in $\Lambda_2$. 
Therefore $C$ is the unique biggest $\Lambda_1$-ideal in 
$\Lambda_2$.

(b) We carry out the proof in four steps. 

Step 1:
In step 1 we show that the map in \eqref{8.1} is surjective. 
Let $L_1\in G(\Lambda_1)$. For almost all $p\in\P$
$(L_1)_{(p)}=(\Lambda_1)_{(p)}$. Define $b_p:=1_A$ for these
$p$. For the finitely many $p\in\P$ with 
$(L_1)_{(p)}\neq (\Lambda_1)_{(p)}$ we have  
$(L_1)_{(p)}=b_p (\Lambda_1)_{(p)}$ for some
$b_p\in A^{unit}$ by Theorem \ref{t7.3} and by 
$L_1\in G(\Lambda_1)$.
By Theorem \ref{t7.2} (c) there is a unique full lattice 
$L_2$ with $(L_2)_{(p)}=b_p(\Lambda_2)_{(p)}$ for each
$p\in\P$, namely $L_2=\bigcap_{p\in\P}b_p(\Lambda_2)_{(p)}$.
By Theorem \ref{t7.3} it is in $G(\Lambda_2)$. Now
\begin{eqnarray*}
(\Lambda_1L_2)_{(p)}=a_p(\Lambda_1)_{(p)}(\Lambda_2)_{(p)}
=a_p(\Lambda_1)_{(p)}=(L_1)_{(p)}
\quad\textup{for each }p\in\P,
\end{eqnarray*}
so $\Lambda_1L_2=L_1$. The map in \eqref{8.1} is surjective.

Step 2: Given a tuple $(a_p)_{p\in P_0}\in \prod_{p\in P_0}
(\Lambda_1)_{(p)}^{unit}$, define for each $p\in \P-P_0$
$a_p:=1_A$ and consider the full lattice 
$L:=\bigcap_{p\in\P}a_p(\Lambda_2)_{(p)}$, which exists by
Theorem \ref{t7.2} (c). 
Here we show $L\in\ker(G(\Lambda_2)\to G(\Lambda_1))$. 
By Theorem \ref{t7.3} $L\in G(\Lambda_2)$. Observe
\begin{eqnarray*}
(\Lambda_1L)_{(p)} = a_p(\Lambda_1\Lambda_2)_{(p)}
=a_p(\Lambda_1)_{(p)}\stackrel{(*)}{=}(\Lambda_1)_{(p)}\quad\textup{for each }p\in\P.
\end{eqnarray*} 
Here $\stackrel{(*)}{=}$ follows
from $a_p\in (\Lambda_1)_{(p)}^{unit}$ for $p\in P_0$ and
$a_p=1_A$ for $p\in\P-P_0$. The observation shows
$\Lambda_1L=\Lambda_1$, 
so $L\in\ker(G(\Lambda_2)\to G(\Lambda_1))$. 

Step 3: Vice versa, consider a full lattice
$L_3\in\ker(G(\Lambda_2)\to G(\Lambda_1))$. For $p\in\P-P_0$
\begin{eqnarray*}
(L_3)_{(p)}&=&(\Lambda_2L_3)_{(p)}
=(\Lambda_2)_{(p)}(L_3)_{(p)}\\
&=&(\Lambda_1)_{(p)}(L_3)_{(p)}
=(\Lambda_1L_3)_{(p)}
=(\Lambda_1)_{(p)}
=(\Lambda_2)_{(p)}.
\end{eqnarray*}
For $p\in P_0$ by Theorem \ref{t7.3} and by $L_3\in G(\Lambda_2)$ 
there exists 
$c_p\in A^{unit}$ with $(L_3)_{(p)}=c_p(\Lambda_2)_{(p)}$.
For $p\in P_0$ 
\begin{eqnarray*}
(\Lambda_1)_{(p)}
=(\Lambda_1L_3)_{(p)}=(\Lambda_1)_{(p)}c_p(\Lambda_2)_{(p)}
=c_p(\Lambda_1)_{(p)},\\
\textup{so }c_p\in (\Lambda_1)_{(p)}^{unit}.
\end{eqnarray*}
$L_3$ is in the image of the tuple $(c_p)_{p\in P_0}
\in \prod_{p\in P_0}(\Lambda_1)_{(p)}^{unit}$.

Step 4: The kernel of the map 
$\prod_{p\in P_0}(\Lambda_1)_{(p)}^{unit}\to G(\Lambda_2)$
is $\prod_{p\in P_0}(\Lambda_2)_{(p)}^{unit}$ because
$a_p(\Lambda_2)_{(p)}=(\Lambda_2)_{(p)}$ is equivalent to 
$a_p\in (\Lambda_2)_{(p)}^{unit}$.

(c) The isomorphism in \eqref{7.5} in Theorem \ref{t7.6} (a)
holds for $(\Lambda,L)=(\Lambda_1,C)$ and for 
$(\Lambda,L)=(\Lambda_2,C)$. It induces a group isomorphism
\begin{eqnarray}\nonumber
\frac{(\Lambda_1/C)^{unit}}{(\Lambda_2/C)^{unit}} &\to& 
\Bigl(\prod_{p\in P_0}((\Lambda_1)_{(p)}/C_{(p)})^{unit}\Bigr)
/\Bigl(\prod_{p\in P_0}((\Lambda_2)_{(p)}/C_{(p)})^{unit}\Bigr)\\
&\cong& \prod_{p\in P_0}
\frac{((\Lambda_1)_{(p)}/C_{(p)})^{unit}}
{((\Lambda_2)_{(p)}/C_{(p)})^{unit}}.\label{8.9}
\end{eqnarray}
It remains to show for each $p\in P_0$ that there is a 
natural isomorphism of groups
\begin{eqnarray}\label{8.10}
\frac{(\Lambda_1)_{(p)}^{unit}}{(\Lambda_2)_{(p)}^{unit}}
&\to& \frac{((\Lambda_1)_{(p)}/C_{(p)})^{unit}}
{((\Lambda_2)_{(p)}/C_{(p)})^{unit}}.
\end{eqnarray}
By Theorem \ref{t7.6} (b) there are natural surjective
group homomorphisms
\begin{eqnarray*}
(\Lambda_1)_{(p)}^{unit} \to ((\Lambda_1)_{(p)}/C_{(p)})^{unit}
\quad\textup{and}\quad 
(\Lambda_2)_{(p)}^{unit} \to ((\Lambda_2)_{(p)}/C_{(p)})^{unit}.
\end{eqnarray*}
The composed group homomorphism
\begin{eqnarray}\label{8.11}
(\Lambda_1)_{(p)}^{unit}\to 
((\Lambda_1)_{(p)}/C_{(p)})^{unit} \to 
\frac{((\Lambda_1)_{(p)}/C_{(p)})^{unit}}
{((\Lambda_2)_{(p)}/C_{(p)})^{unit}}
\end{eqnarray}
is surjective, and the kernel contains
$(\Lambda_2)_{(p)}^{unit}$. 

We want to show that the kernel is precisely 
$(\Lambda_2)_{(p)}^{unit}$. 
Let $a\in (\Lambda_1)_{(p)}^{unit}$ be in the kernel.
We want to show $a\in (\Lambda_2)_{(p)}^{unit}$.

The image of $a$ in $((\Lambda_1)_{(p)}^{unit}/C_{(p)})^{unit}$
is in $((\Lambda_2)_{(p)}/C_{(p)})^{unit}$, 
so by Theorem \ref{t7.6} (b) it has a preimage
$b$ in $(\Lambda_2)_{(p)}^{unit}$. Therefore $ab^{-1}$
has the unit element in $((\Lambda_1)_{(p)}/C_{(p)})^{unit}$
as image in $((\Lambda_1)_{(p)}/C_{(p)})^{unit}$.
Thus $ab^{-1}=1_A+f$ for some $f\in C_{(p)}$. 
So $a=b+bf\in (\Lambda_2)_{(p)}$.
The same argument shows $a^{-1}\in (\Lambda_2)_{(p)}$,
so $a\in (\Lambda_2)_{(p)}^{unit}$.
Therefore the kernel of the composed map in \eqref{8.11} is 
$(\Lambda_2)_{(p)}^{unit}$. 

(d) Consider $a\in\Lambda_1$ with $a+C\in (\Lambda_1/C)^{unit}$.
We claim
\begin{eqnarray}\label{8.12}
(a+C)\Lambda_1=\Lambda_1.
\end{eqnarray}
Proof of the claim: An element $\www{a}\in\Lambda$ with
$(a+C)(\www{a}+C)=1_A+C$ exists. Then
\begin{eqnarray*}
\Lambda_1\supset (a+C)\Lambda_1\supset (a+C)(\www{a}+C)\Lambda_1
=(1_A+C)\Lambda_1=\Lambda_1. \hspace*{1cm}(\Box)
\end{eqnarray*}

By the proof of Theorem \ref{t7.6} (b) for each $p\in P_0$
an element $d_p\in a+C\subset\Lambda_1$ with
$d_p\in (\Lambda_1)_{(p)}^{unit}$ exists.

By the proof of part (c), the image in
$\bigoplus_{p\in P_0}(\Lambda_1)_{(p)}^{unit}/
(\Lambda_2)_{(p)}^{unit}$ of the class 
$((a+C)\mmod (\Lambda_2/C)^{unit})$ under the isomorphism in
\eqref{8.4} is the class of the tuple
$(d_p)_{p\in P_0}\in \prod_{p\in P_0}(\Lambda_1)_{(p)}^{unit}$. 

Define $d_p:=1_A$ for $p\in\P-P_0$. By part (b) the image
of $(d_p)_{p\in P_0}$ in $\ker(G(\Lambda_2)\to G(\Lambda_1))$
is the full lattice $L=\bigcap_{p\in P_0}d_p(\Lambda_2)_{(p)}$.
It remains to show $L=C+a\Lambda_2$. 

For $p\in P_0$ write $a=d_p+f_p$ for some $f_p\in C$. 
For $p\in P_0$ 
\begin{eqnarray*}
(C+a\Lambda_2)_{(p)}
&=& (C+d_p\Lambda_2+f_p\Lambda_2)_{(p)}
=(C+d_p\Lambda_2)_{(p)}\\
&=& C_{(p)} + d_p(\Lambda_2)_{(p)}
=d_pC_{(p)}+d_p(\Lambda_2)_{(p)}\\
&=& d_p(C+\Lambda_2)_{(p)}=d_p(\Lambda_2)_{(p)}.
\end{eqnarray*}
For $p\in\P-P_0$ 
\begin{eqnarray*}
(C+a\Lambda_2)_{(p)}&=& C_{(p)}+a(\Lambda_2)_{(p)}
=C_{(p)}+a(\Lambda_1)_{(p)}
=((C+a)\Lambda_1)_{(p)}\\
&\stackrel{\eqref{8.12}}{=}& (\Lambda_1)_{(p)}=(\Lambda_2)_{(p)}=d_p(\Lambda_2)_{(p)}.
\end{eqnarray*}
Therefore $L=\bigcap_{p\in \P}(C+a\Lambda_2)_{(p)}
=C+a\Lambda_2$. This shows part (d). 

(e) {\bf Claim:} {\it The group homomorphism
$$\ker(G(\Lambda_2)\to G(\Lambda_1))
\to 
\ker(G([\Lambda_2]_\varepsilon)\to G([\Lambda_1]_\varepsilon))$$
is surjective.}

Proof of the Claim: Consider $\www{L}\in G(\Lambda_2)$ with
$[\www{L}]_\varepsilon\in \ker(G([\Lambda_2]_\varepsilon)\to
G([\Lambda_1]_\varepsilon))$. 
Then $[\Lambda_1\www{L}]_\varepsilon=[\Lambda_1]_\varepsilon$,
so an element $a\in A^{unit}$ with 
$\Lambda_1\www{L}=a\Lambda_1$ exists.
Define $L:=a^{-1}\www{L}$. Then
$[L]_\varepsilon=[\www{L}]_\varepsilon$, 
$L\in G(\Lambda_2)$, $\Lambda_1L=\Lambda_1$, so
$L\in\ker (G(\Lambda_2)\to G(\Lambda_1))$.
\hfill ($\Box$)

\medskip
The Claim and the exactness of the sequence in \eqref{8.3} 
show that the part
$$\prod_{p\in P_0}(\Lambda_1)_{(p)}^{unit} / 
(\Lambda_2)_{(p)}^{unit} \to G([\Lambda_2]_\varepsilon)
\to G([\Lambda_1]_\varepsilon) \to 1$$
of the sequence in \eqref{8.7} is exact.

Because of 
$\Lambda_2^{unit}=\bigcap_{p\in \P}(\Lambda_2)_{(p)}^{unit}$
the kernel of the map
\begin{eqnarray}\label{8.13}
\Lambda_1^{unit}\to\prod_{p\in P_0} 
(\Lambda_1)_{(p)}^{unit} / (\Lambda_2)_{(p)}^{unit}
\end{eqnarray}
is $\Lambda_2^{unit}$. It remains to show that the image of the
map in \eqref{8.13} is the kernel of the map
\begin{eqnarray}\label{8.14}
\prod_{p\in P_0}(\Lambda_1)_{(p)}^{unit} / 
(\Lambda_2)_{(p)}^{unit}\to G([\Lambda_2]_\varepsilon).
\end{eqnarray}

First consider $a\in \Lambda_1^{unit}$ and 
$(a_p)_{p\in \P}$ with $a_p:=a$ for $p\in P_0$ 
and $a_p:=1_A$ for $p\in \P-P_0$. 
The image in $G(\Lambda_2)$ of the class in $\prod_{p\in P_0}
(\Lambda_1)_{(p)}^{unit}/(\Lambda_2)_{(p)}^{unit}$ of
the tuple $(a_p)_{p\in P_0}\in
\prod_{p\in P_0}(\Lambda_1)_{(p)}^{unit}$ is
\begin{eqnarray*}
L&:=& \bigcap_{p\in\P} a_p(\Lambda_2)_{(p)}
=\bigcap_{p\in P_0}a(\Lambda_2)_{(p)}\cap 
\bigcap_{p\in\P-P_0}(\Lambda_2)_{(p)}\\
&=& \bigcap_{p\in\P} a(\Lambda_2)_{(p)}
=a\Lambda_2\in [\Lambda_2]_\varepsilon.
\end{eqnarray*}
Therefore the image of the map in \eqref{8.13} is contained in the 
kernel of the map in \eqref{8.14}.

Finally consider a tuple 
$(a_p)_{p\in P_0}\in\prod_{p\in P_0}(\Lambda_1)_{(p)}^{unit}$
whose class in $\prod_{p\in P_0} (\Lambda_1)_{(p)}^{unit} / 
(\Lambda_2)_{(p)}^{unit}$ is in the kernel of the map 
to $G([\Lambda_2]_\varepsilon)$ in \eqref{8.14}.
Define $a_p:=1_A$ for $p\in \P-P_0$. Then the full lattice
$L$ with $L_{(p)}=a_p(\Lambda_2)_{(p)}$ for each $p\in\P$
satisfies $[L]_\varepsilon=[\Lambda_2]_\varepsilon$.
Therefore $L=b\Lambda_2$ for some $b\in A^{unit}$.
Comparison with $L_{(p)}=a_p(\Lambda_2)_{(p)}$ gives
$$\frac{b}{a_p}\in (\Lambda_2)_{(p)}^{unit}\subset (\Lambda_1)_{(p)}^{unit}
\quad\textup{for each }p\in\P,$$
so $b\in \bigcap_{p\in \P}(\Lambda_1)_{(p)}^{unit}=\Lambda_1^{unit}$.
Therefore the image of the map in \eqref{8.13} is the kernel
of the map in \eqref{8.14}.

(f) The left hand side of the isomorphism in part (c) is
obviously finite.
One applies the exact sequence in part (e) and the isomorphism
in part (c). Especially, the group $\Lambda_2^{unit}$ has
finite index in the group $\Lambda_1^{unit}$. 
\hfill$\Box$

\begin{remarks}\label{t8.3}
Comparing Theorem \ref{t8.2} with \cite[Ch. I \S 12]{Ne99},
this reference has the following restrictions and differences.
There $A$ is an algebraic number field,
$\Lambda_1$ is the maximal order $\Lambda_{max}$ in $A$,
and localization is done by prime ideals in $\Lambda_2$.
Then the Propositions (12.9) and (12.11) in 
\cite[Ch. I \S 12]{Ne99} are analogs of the parts
(e) and (c) of Theorem \ref{t8.2}. Theorem (12.12) in 
\cite[Ch. I \S 12]{Ne99} is a special case of part (f)
of Theorem \ref{t8.2}. 

The surjectivity of the map in \eqref{8.1} for the case
when $A$ is an algebraic number field is also proved in
\cite[Corollary 2.1.11]{DTZ62}.
\end{remarks}

\section{Dividing out the radical}\label{s9}
\setcounter{equation}{0}
\setcounter{table}{0}

Throughout the whole paper $A$ is as in Theorem \ref{t3.1}, 
so $A$ is a finite dimensional commutative $\Q$-algebra 
with unit element $1_A$. We use the notations in Theorem \ref{t3.1}.
Recall that the radical $R=\bigoplus_{j=1}^k N^{(j)}$ is an 
ideal, and all its elements are nilpotent.
Recall the separable subalgebra 
$F:=\bigoplus_{j=1}^k F^{(j)}\subset A$ of $A$, the
natural decomposition $A=F\oplus R$, and the induced
projection $\pr_F:A\to F$. By Remark \ref{t3.2} (ii)
it respects addition, multiplication and division in $A$
and $F$.

This section is devoted to the action of the projection 
$\pr_F:A\to F$ on invertible full lattices.
First Lemma \ref{t9.1} states basic facts.

\begin{lemma}\label{t9.1}
(a) The image $\pr_FL$ of a full lattice in $A$ is a full
lattice in $F$. The projection $\pr_F:A\to F$ is compatible 
with the multiplication in $\LL(A)$ and $\LL(F)$, 
\begin{eqnarray}
\pr_F(L_1\cdot L_2)=\pr_F L_1\cdot \pr_F L_2\quad
\textup{for }L_1,L_2\in\LL(A).\label{9.1}
\end{eqnarray}
It induces a surjective homomorphism 
$\pr_F:\LL(A)\to\LL(F)$ of semigroups. 
Especially, it maps idempotents to idempotents,
so orders to orders. So it restricts to a surjective
homomorphism
\begin{eqnarray}\label{9.2}
\pr_F:\{\textup{orders in }A\}\to\{\textup{orders in }F\}
\end{eqnarray}
of semigroups. 
Though it does not respect the division maps. In general 
we only have 
\begin{eqnarray}\label{9.3}
\pr_F(L_1: L_2)\subset\pr_F L_1:\pr_FL_2\quad
\textup{for }L_1,L_2\in\LL(A).
\end{eqnarray}

(b) The projection $\pr_F:\LL(A)\to\LL(F)$ respects
$\varepsilon$-equivalence and $w$-equivalence. Therefore it 
induces surjective homomorphisms
\begin{eqnarray}\label{9.4}
\pr_F:\EE(A)&\to& \EE(F),\\
\pr_F:W(\LL(A))&\to& W(\LL(F))\label{9.5}
\end{eqnarray}
of semigroups. 

(c) In general, for $L\in\LL(A)$ we only have an inclusion
$\pr_F\OO(L)\subset\OO(\pr_FL)$. But if $L$ is invertible
then $\pr_FL$ is invertible with 
$\OO(\pr_FL)=\pr_F(\OO(L))$ and $(\pr_FL)^{-1}=\pr_F(L^{-1})$.

(d) Let $\Lambda$ be an order in $L$. Write 
$\Lambda_0:=\pr_F\Lambda$ for the induced order in $F$.
Part (c) gives rise to two group homomorphisms,
\begin{eqnarray}\label{9.6}
G(\Lambda)&\to& G(\Lambda_0),\quad L\mapsto \pr_FL,\\
G([\Lambda]_\varepsilon)&\to& G([\Lambda_0]_\varepsilon),\quad
[L]_\varepsilon\mapsto [\pr_FL]_\varepsilon.\label{9.7}
\end{eqnarray}
\end{lemma}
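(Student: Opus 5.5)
We prove the four parts of Lemma \ref{t9.1} in order. Everything rests on Remark \ref{t3.2} (ii), (iii): $\pr_F$ is a surjective ring homomorphism $A\to F$ with kernel $R$, $\pr_F(1_A)=1_F$, $\pr_F(A^{unit})=F^{unit}$ and $\pr_F^{-1}(F^{unit})=A^{unit}$.

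\emph{Part (a).} The image $\pr_F L$ of a full lattice is a finitely generated $\Z$-submodule of $F$. It spans $\pr_F(\Q L)=\pr_F(A)=F$, so it is a full lattice in $F$.
\begin{itemize}
\item Since $\pr_F$ is additive and multiplicative, it maps finite sums $\sum a_ib_i$ to $\sum \pr_F(a_i)\pr_F(b_i)$. This gives \eqref{9.1}, hence a semigroup homomorphism.
\item Surjectivity: $F\subset A$ is a subalgebra, and a full lattice $K\in\LL(F)$ maps to $\pr_F(K\oplus R')=K$ for any full lattice $R'$ in $R$.
\item Idempotents go to idempotents, so by Theorem \ref{t5.6} (a) orders go to orders. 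Each order $\Gamma$ of $F$ is the image of the order $\Gamma+\Lambda'$ for a suitable order $\Lambda'$ with $\pr_F\Lambda'\subset\Gamma$, e.g.\ $\Lambda'=\Z 1_A+\sum_l (M)^l$ for a full lattice $M\subset R$ closed under multiplication. This gives the surjection \eqref{9.2}.
\item For \eqref{9.3}: if $aL_2\subset L_1$ then $\pr_F(a)\pr_F L_2\subset \pr_F L_1$.
\end{itemize}

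\emph{Part (b).} If $aL_1=L_2$ with $a\in A^{unit}$, then $\pr_F(a)\in F^{unit}$ and $\pr_F(a)\pr_F L_1=\pr_F L_2$. If $L_1x_1=L_2$ and $L_2x_2=L_1$, applying \eqref{9.1} gives the same relations for the images. Hence $\pr_F$ respects $\varepsilon$- and $w$-equivalence. Surjectivity of \eqref{9.4} and \eqref{9.5} follows from surjectivity on $\LL$.

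\emph{Part (c).} The inclusion $\pr_F\OO(L)\subset\OO(\pr_F L)$ is \eqref{9.3} with $L_1=L_2=L$. Now let $L$ be invertible with $LL^{-1}=\OO(L)$. Applying \eqref{9.1} gives $\pr_F L\cdot\pr_F(L^{-1})=\pr_F\OO(L)=:\Lambda_0$, which is an order by (a). Moreover $\Lambda_0\,\pr_F L=\pr_F(\OO(L)L)=\pr_F L$. So $\pr_F L$ is invertible in the semigroup sense with $e_{\pr_F L}=\Lambda_0$.

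By Theorem \ref{t5.6} (c), $e_{\pr_F L}=\OO(\pr_F L)$, so $\OO(\pr_F L)=\pr_F\OO(L)$. The inverse is $\pr_F(L^{-1})$ by the uniqueness in Theorem \ref{t4.2} (a), using $\pr_F(L^{-1})\Lambda_0=\pr_F(L^{-1}\OO(L))=\pr_F(L^{-1})$.

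\emph{Part (d).} By (c), $L\in G(\Lambda)$ maps into $G(\pr_F\Lambda)$, and \eqref{9.1} makes this map a homomorphism. By (b) it descends to $\varepsilon$-classes: $[L]_\varepsilon\in G([\Lambda]_\varepsilon)$ means $L\in G(\Lambda)$ by Theorem \ref{t5.6} (c) and the $\varepsilon$-invariance of $\OO$.

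\textbf{Main obstacle.} The only step that is not formal is the surjectivity in \eqref{9.2}. We need a multiplicatively closed full lattice in $R$ (take a $\Z$-span of a basis, scaled by a large integer to make it closed under products). We also need the sum with a given order $\Gamma\subset F$ to be a ring. This holds since $\Gamma\cdot M$ can be absorbed after enlarging $M$ to $\Gamma M+M$, which again spans $R$ and is still nilpotent.
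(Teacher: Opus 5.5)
Your proposal is correct and follows the paper's proof. Parts (a) and (b) are the formal consequences of $\pr_F$ being a surjective ring homomorphism with $\pr_F(A^{unit})=F^{unit}$, which the paper simply calls obvious. Part (c) is obtained, exactly as in the paper, by applying \eqref{9.1} to $L\cdot L^{-1}=\OO(L)$ and using the uniqueness of $e_a$ and $a^{-1}$.

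Your explicit preimage $\Gamma+\Gamma M$ of an order $\Gamma\subset F$, with $M\subset R$ a multiplicatively closed full lattice, correctly supplies the surjectivity in \eqref{9.2}, a step the paper does not spell out. Two small corrections there. What is needed is that $\Gamma M$ is closed under multiplication and stable under $\Gamma$, not that it is nilpotent. Also, $\Z 1_A+M$ is not a full lattice in $A$, so it is not an order in the paper's sense; only $\Gamma+\Gamma M$ has to be one.
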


{\bf Proof:}
The parts (a) and (b) are obvious. The inclusion
$\pr_F\OO(L)\subset \OO(\pr_FL)$ follows from \eqref{9.3}.
Let $L\in\OO(L)$ be invertible. Then by \eqref{9.1}
$$\pr_FL\cdot \pr_FL^{-1}=\pr_F(L\cdot L^{-1})=\pr_F(\OO(L)),$$
and therefore $\pr_FL$ is invertible with 
$\OO(\pr_FL)=\pr_F\OO(L)$ and $(\pr_FL)^{-1}=\pr_FL^{-1}$.
Now part (d) is obvious.\hfill$\Box$

\bigskip
The group homomorphisms in \eqref{9.6} and \eqref{9.7}
have surprisingly good properties. 
The more difficult part of Theorem \ref{t9.2} is due to 
Faddeev \cite[Theorem 3]{Fa68}, namely the injectivity of the
map in \eqref{9.7}.

\begin{theorem}\label{t9.2}
The group homomorphism in \eqref{9.6} is surjective.
The group homomorphism in \eqref{9.7} is an isomorphism.
\end{theorem}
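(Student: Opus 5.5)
The plan is to prove the two statements separately: surjectivity of \eqref{9.6} by localization, and bijectivity of \eqref{9.7} by a determinant (index) argument.

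\textbf{Surjectivity of \eqref{9.6}.} Put $\Lambda_0:=\pr_F\Lambda$ and let $L_0\in G(\Lambda_0)$. By Theorem \ref{t7.3} applied in $F$, there are $b_p\in F^{unit}$ with $(L_0)_{(p)}=b_p(\Lambda_0)_{(p)}$, and $b_p=1_F$ for almost all $p$.

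By Remark \ref{t3.2} (iii), each $b_p':=b_p+0\in F\oplus R$ lies in $A^{unit}$. We take $b_p'=1_A$ whenever $b_p=1_F$. Theorem \ref{t7.2} (c) then gives a full lattice $L$ with $L_{(p)}=b_p'\Lambda_{(p)}$ for all $p$. By the easy direction of Theorem \ref{t7.3}, proved above, $L$ is invertible with $\OO(L)=\Lambda$, so $L\in G(\Lambda)$.

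Since $\pr_F$ is $\Q$-linear and multiplicative, $(\pr_FL)_{(p)}=\pr_F(L_{(p)})=b_p(\Lambda_0)_{(p)}=(L_0)_{(p)}$ for every $p$. Theorem \ref{t7.2} (a) then gives $\pr_FL=L_0$. Surjectivity of \eqref{9.7} follows immediately.

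\textbf{Injectivity of \eqref{9.7}.} Let $L\in G(\Lambda)$ with $\pr_FL=b\Lambda_0$ for some $b\in F^{unit}\subset A^{unit}$. We need $L\sim_\varepsilon\Lambda$. Replacing $L$ by $b^{-1}L$, we may assume $\pr_FL=\Lambda_0\ni 1_F$. Then some $1_A+r$ with $r\in R$ lies in $L$. This is a unit in $A$, so replacing $L$ by $(1_A+r)^{-1}L$ keeps the $\varepsilon$-class and $\pr_FL=\Lambda_0$, and gives $1_A\in L$. Hence $\Lambda=\Lambda\cdot 1_A\subset\Lambda L=L$. It remains to show $[L:\Lambda]=1$, and by Theorem \ref{t7.2} (d) this is a local statement.

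By Theorem \ref{t7.3}, $L_{(p)}=a_p\Lambda_{(p)}$ with $a_p\in A^{unit}$. Then $[L_{(p)}:\Lambda_{(p)}]$ is the inverse of the $p$-part of $|\det(a_p)|$, where $\det$ is as in Lemma \ref{t7.5}. Applying $\pr_F$ gives $(\Lambda_0)_{(p)}=\pr_F(a_p)(\Lambda_0)_{(p)}$, so $\pr_F(a_p)\in(\Lambda_0)_{(p)}^{unit}$. By Lemma \ref{t7.5} (a) in $F$, $\det_F(\pr_Fa_p)\in\Z_{(p)}^{unit}$.

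\textbf{Key step.} The step I expect to need the most care is the determinant identity
$$\det\nolimits_A(a)=\prod_{j=1}^k\det\nolimits_{F^{(j)}}(\pr_{F^{(j)}}a)^{\dim A^{(j)}/\dim F^{(j)}}.$$
To prove it, write $a=f+n$ with $f\in F$ and $n\in R$. Since $\mu_n$ is nilpotent and commutes with $\mu_f$, we get $\det\mu_a=\det\mu_f$. Each $A^{(j)}$ is a free $F^{(j)}$-module; it is an $F^{(j)}$-vector space, so freeness holds automatically. Hence $\det\mu_f$ on $A^{(j)}$ is the appropriate power of the norm on $F^{(j)}$.

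Consequently $\det_A(a_p)\in\Z_{(p)}^{unit}$, so $[L_{(p)}:\Lambda_{(p)}]=1$ and $L_{(p)}=\Lambda_{(p)}$ for all $p$. Theorem \ref{t7.2} (a) then gives $L=\Lambda$, which proves injectivity. Finiteness of both groups is already given by Theorem \ref{t6.5}.
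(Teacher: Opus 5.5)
Your proof of the surjectivity of \eqref{9.6} is the paper's proof: local generators $b_p\in F^{unit}\subset A^{unit}$, gluing by Theorem \ref{t7.2} (c), and invertibility by Theorem \ref{t7.3}. You even check $\pr_FL=L_0$ locally, where the paper just says ``of course''.

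For the injectivity of \eqref{9.7}, the paper gives no argument. It invokes Faddeev's result [Fa68, Theorem 3], and Remark \ref{t9.3} (i) only mentions that the authors know a localization-free proof, which is not written out. Your argument is a genuine alternative:
\begin{itemize}
\item You normalize a kernel element to $\pr_FL=\Lambda_0$ and $1_A\in L$, exactly as in Step 1 of the proof of Theorem \ref{t6.5}. Hence $\Lambda\subset L$.
\item You then observe that the local index $[L_{(p)}:\Lambda_{(p)}]=p^{-v_p(\det a_p)}$ depends only on the semisimple part of the local generator $a_p$, because the commuting nilpotent part does not change the determinant.
\end{itemize}
This makes Theorem \ref{t9.2} self-contained within the paper's localization framework. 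It still relies on the implication $\Rightarrow$ of Theorem \ref{t7.3}, which the paper also imports from Faddeev. As a by-product, your argument describes the kernel of \eqref{9.6} explicitly as $\{(1_A+r)\Lambda\,|\,r\in R\}$. This explains why the radical is invisible on $\varepsilon$-classes of invertible lattices. The paper's citation is shorter but gives no such insight.

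One inference needs repair. Lemma \ref{t7.5} (a) applied to $F$ only tells you that the unweighted product $\det_F(\pr_Fa_p)=\prod_j N_{F^{(j)}/\Q}(f_j)$ is a $p$-unit. Your identity for $\det_A(a_p)$, however, carries the exponents $d_j=\dim A^{(j)}/\dim F^{(j)}$. If the $d_j$ differ, the $p$-adic valuations of the factors could cancel in the unweighted product without cancelling in the weighted one, so your ``consequently'' does not follow as written.

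The fix is immediate. The projection $F\to F^{(j)}$, $x\mapsto 1_{F^{(j)}}x$, is a unital ring homomorphism, so $f_j$ is a unit in the localized order $(1_{F^{(j)}}\Lambda_0)_{(p)}$ of $F^{(j)}$. Lemma \ref{t7.5} (a) applied in $F^{(j)}$ then gives $N_{F^{(j)}/\Q}(f_j)\in\Z_{(p)}^{unit}$ for each $j$ separately. With this change the argument is complete.
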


{\bf Proof:} We show that the group homomorphism in \eqref{9.6}
is surjective. It implies immediately that also the group
homomorphism in \eqref{9.7} is surjective.
Together with Faddeev's result \cite[Theorem 3]{Fa68} that
the group homomorphism in \eqref{9.7} is injective, this
implies that it is an isomorphism.

Let $L_0\in G(\Lambda_0)$, so $L_0\in\LL(F)$ is invertible with
$\OO(L_0)=\Lambda_0$. By Theorem \ref{t7.3} there is an element
$a_p\in F^{unit}$ for each $p\in\P$ with 
$$(L_0)_{(p)}=a_p(\Lambda_0)_{(p)},$$
and there is a finite set $P_0\subset\P$ such that we can
choose $a_p=1$ for $p\in \P-P_0$. 

Of course $F^{unit}\subset A^{unit}$. By Theorem \ref{t7.2} (c) 
there is a unique full lattice $L$ in $A$ with
$$L_{(p)}=a_p\Lambda_{(p)}\quad\textup{for each }p\in\P.$$
By Theorem \ref{t7.3} it is invertible with $\OO(L)=\Lambda$.
Of course $\pr_F(L)=L_0$.

This shows the surjectivity of the maps in \eqref{9.6} and
\eqref{9.7}. The map in \eqref{9.7} is injective by 
\cite[Theorem 3]{Fa68}.
\hfill$\Box$

\begin{remarks}\label{t9.3}
(i) Also 
Faddeev's proof of the injectivity of the map in \eqref{9.7}
uses the localization in section \ref{s7}. It is not difficult.
We have a different proof without localization.

(ii) The proof above of the surjectivity of the map
in \eqref{9.6} uses the localization in section \ref{s7}.
It is simple. Though in this case we do not have a different
proof without localization. It seems difficult to see
the surjectivity without the tools from section \ref{s7}. 

(iii) The second group $G([\Lambda_0]_\varepsilon)$ in 
\eqref{9.7} is finite by the Jordan-Zassenhaus Theorem
\ref{t6.3}. Therefore also the first group
$G([\Lambda]_\varepsilon)$ in \eqref{9.7} is finite.
We know this already from Theorem \ref{t6.5}.
But the equality of the sizes of the two groups is new.

(iv) By \eqref{5.13} any $w$-equivalence class of 
$\varepsilon$-classes $[L]_\varepsilon$ of full lattices
$L$ with $\OO(L)=\Lambda$ is in bijection to 
$G([\Lambda]_\varepsilon)$, so finite. 
Therefore we would obtain a second proof of Theorem \ref{t6.5}
if we would have an independent proof that for each order
$\Lambda$ in $A$ the number of $w$-equivalence classes
with this order is finite.

(v) Suppose $A\supsetneqq F$, so $A$ is not separable.
Let $\Lambda_0$ be an order in $F$. By Theorem \ref{t6.4}
there are infinitely many orders $\Lambda$ in $A$
with $\pr_F\Lambda=\Lambda_0$. By Theorem \ref{t9.2}
the finite groups $G([\Lambda]_\varepsilon)$ for all these
orders are canonically isomorphic to 
$G([\Lambda_0)]_\varepsilon)$ and thus also canonically
isomorphic to one another.
If $\Lambda$ and $\www{\Lambda}$ are two such orders
with $\Lambda\supset\www{\Lambda}$ then the surjective
group homomorphism $G([\www{\Lambda}]_\varepsilon)\to
G([\Lambda]_\varepsilon)$ from Theorem \ref{t8.2} is
the isomorphism induced by 
$G([\www{\Lambda}]_\varepsilon)\cong
G([\Lambda_0]_\varepsilon)\cong G([\Lambda]_\varepsilon)$.

(vi) But in the case $A\supsetneqq F$, for two orders
$\Lambda$ and $\www{\Lambda}$ in $A$ with
$\pr_F\Lambda=\pr_F\www{\Lambda}$, the finite sets
$\{[L]_\varepsilon\,|\, L\in\LL(A),\OO(L)=\Lambda\}$ and
$\{[L]_\varepsilon\,|\, L\in\LL(A),\OO(L)=\www{\Lambda}\}$
may have very different sizes. 
\end{remarks}

\section{Sufficiently high powers of full lattices
are invertible}\label{s10}
\setcounter{equation}{0}
\setcounter{table}{0}

A main result in \cite{DTZ62} is Theorem C in section 1.5.
It says that if $A$ is an algebraic number field of dimension
$n\in\Z_{\geq 2}$ then for each full lattice $L\in\LL(A)$
each power $L^k$ with $k\geq n-1$ is invertible.
This result was generalized in \cite[Theorem 2]{Si70}
to the case when $A$ is separable, so a direct sum
of algebraic number fields. Here we generalize it further to
the case of our standard situation.

\begin{theorem}\label{t10.1}
Let $A$ be a commutative $\Q$-algebra of dimension 
$n\in\Z_{\geq 2}$ with unit element $1_A$. For each full lattice
$L\in\LL(A)$ each power $L^k$ with $k\geq n-1$ is invertible.
\end{theorem}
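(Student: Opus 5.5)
By Theorem \ref{t7.3}, invertibility of a full lattice is a local property. So I plan to reduce Theorem \ref{t10.1} to showing that for every $p\in\P$ the localization $(L^k)_{(p)}=(L_{(p)})^k$ is a principal $\OO(L^k)_{(p)}$-ideal. For almost all $p$ we have $L_{(p)}=\OO(L)_{(p)}$, and then nothing needs to be shown. It remains to treat the finitely many other primes. By Theorem \ref{t7.2} (a), localization commutes with products and with the division map, so $\OO(L^k)_{(p)}=\OO((L_{(p)})^k)$. Hence it suffices to prove the following local statement: if $M$ is a full $\Z_{(p)}$-lattice in $A$ and $k\geq n-1$, then $M^k=a\,\OO(M^k)$ for some $a\in A^{unit}$.

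For the local statement I will follow the strategy of \cite{DTZ62}. The first step is to find an element $a\in M$ such that $aM^{k-1}=M^k$, i.e.\ a ``principal reduction'' of $M$ acting on the high power. Once $a$ is found, $a$ is a unit of $A$, because $M^k$ is full, and $a^{-1}M^k=M^{k-1}\cdot(\ldots)$ is a lattice $\Gamma$ with $\Gamma\cdot M^k\subset M^k$. The standard argument then gives $M^k=a\,\OO(M^k)$. More precisely, set $\Gamma:=a^{-1}M^k$. Then $M^{2k}=aM^{k-1}M^k=aM^{2k-1}$, so $\Gamma^2=a^{-2}M^{2k}=a^{-2}aM^{2k-1}\subset a^{-1}M^k\cdot(a^{-1}M^{k})$. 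I will arrange the bookkeeping so that $\Gamma\cdot\Gamma\subset\Gamma$ and $\Gamma M^k=M^k$. Krull's lemma (Lemma \ref{t5.2} (c)) then gives $1_A\in\Gamma$, so $\Gamma$ is an order, and $\Gamma=\OO(M^k)$ follows.

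The existence of $a$ is the main obstacle. I will work modulo $p$. Consider $\Lambda:=\OO(M)$ and the finite $\F_p$-algebra $\Lambda_{(p)}/p\Lambda_{(p)}$, which has dimension $n$ and decomposes as in Remark \ref{t3.2} (i) and the proof of Theorem \ref{t7.6}. The module $\oooo{M}:=M/pM$ is an $n$-dimensional module over it. The Dade--Taussky--Zassenhaus counting argument says that the chain $M\supset \ldots$ of ``reduction numbers'' stabilizes after at most $n-1$ steps. Concretely, consider the graded pieces $M^j/\mathfrak{m}M^j$, where $\mathfrak{m}$ is the Jacobson radical of $\Lambda_{(p)}$, and find $a\in M$ with $aM^{j}=M^{j+1}$ for $j\geq n-1$. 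This uses that the Hilbert function of the fiber cone is bounded by the rank $n$, and that over a large enough residue field a general element is a reduction. If $\F_p$ is too small, I will first pass to an unramified extension $\Z_{(p)}\to\Z_{(p)}[\zeta]$, obtain the reduction there, and descend. Descent is harmless because principality of $M^k$ over $\OO(M^k)$ can be tested after faithfully flat extension, using Lemma \ref{t7.5}-type unit criteria.

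The radical $R$ needs separate care. I expect it to be handled either by noting that the argument above never used separability, or by combining the separable case \cite[Theorem 2]{Si70} applied to $\pr_FL$ with Theorem \ref{t9.2}, which lifts invertibility through $\pr_F$. The latter route does not directly control $\OO(L^k)$ versus $\pr_F$, however, so I would rely primarily on the local reduction argument, which treats $A$ uniformly. The hard step to check is the bound $n-1$ on the reduction exponent when $\Lambda_{(p)}/p\Lambda_{(p)}$ has nilpotents coming from $R$.
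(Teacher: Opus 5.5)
Your localization step and the endgame via Krull's lemma are sound in outline, but the argument stops at the decisive point. The existence of $a\in M$ with $aM^j=M^{j+1}$ for all $j\ge n-1$ is only asserted, and you yourself leave the bound $n-1$ unchecked. The facts you cite (the fiber cone has Hilbert function at most $n$, and over a large enough residue field a general element is a reduction) only produce some $a\in M$ with $aM^j=M^{j+1}$ for $j\gg0$.

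The missing idea is to normalize by $a$:
\begin{itemize}
\item $a$ is a unit of $A$, and $N:=a^{-1}M$ contains $1_A$. So $N\subset N^2\subset\cdots$ stabilizes at $\Gamma:=N^j$, which is idempotent and hence an order.
\item Modulo the maximal ideal $\mathfrak{m}$ of the base, the images of the $N^l$ in the $n$-dimensional space $\Gamma/\mathfrak{m}\Gamma$ increase strictly until they fill it, because the image of $N^{l+1}$ is the product of the images of $N^l$ and $N$.
\item The image of $N$ has dimension at least $2$. Otherwise it is spanned by the class of $1_A$, and so are all later images, contradicting $n\ge2$.
\item Hence $N^{n-1}+\mathfrak{m}\Gamma=\Gamma$, so $N^{n-1}=\Gamma$ by Nakayama. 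Therefore $M^k=a^k\Gamma$ and $\OO(M^k)=\Gamma$ for $k\ge n-1$.
\end{itemize}
This is the count of Theorem \ref{t10.3} (c),(d). It never sees nilpotents, so the radical is not an issue at this step.

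Two further repairs are needed. First, your $\Gamma:=a^{-1}M^k$ equals $M^{k-1}$, which is in general not closed under multiplication. Use $a^{-k}M^k$ instead, which is idempotent because $M^{2k}=a^kM^k$. Second, what descends from the extension is not a generator, since that lives in the extension. What descends is the identity $M^k(\OO(M^k):M^k)=\OO(M^k)$, because products and colons commute with flat base change. Via Theorem \ref{t7.2} (a) and Theorem \ref{t5.6} (c) this gives invertibility.

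With these additions your route is a genuine alternative to the paper's, which avoids fiber cones. The paper proceeds in three steps:
\begin{itemize}
\item Theorem \ref{t10.2} builds an order $\Lambda\supset\OO(L)$ with $\Lambda L\in G(\Lambda)$; this is where the radical filtration is used.
\item Theorem \ref{t8.2} (b) moves $L$ within its $w$-class to some $L_1$ with $L_1\Lambda=\Lambda$.
\item An element $b_p\in L_1\cap\Lambda_{(p)}^{unit}$, playing the role of your $a$, is chosen by avoiding the maximal ideals of $\Lambda/p\Lambda$.
\end{itemize}
Your approach replaces Theorems \ref{t10.2} and \ref{t8.2} by the dimension bound on the fiber cone, at the price of a base change.

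That base change is not cosmetic. Take $A=\Q^3$, $p=2$ and $L=\{x\in\Z^3\mid x_1+x_2+x_3\in 2\Z\}$. Then $L^j_{(2)}=\Z^3_{(2)}$ for $j\ge 2$. An $a\in L_{(2)}$ with $aL_{(2)}^j=L_{(2)}^{j+1}$ for some $j$ (the relation propagates to all larger $j$) would therefore be a unit of $\Z_{(2)}^3$ with even coordinate sum, which is impossible. Over $\Z_{(2)}[\zeta_3]$ the element $(1,\zeta_3,\zeta_3^2)$ works, and $L^2=\Z^3$ is indeed invertible.

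In this example $\Lambda=\Z^3$ is the only order allowed in Theorem \ref{t10.3}. The image of any admissible $L_1$ in $\Lambda/2\Lambda=\F_2^3$ is the plane $x_1+x_2+x_3=0$. This plane lies in the union of the three coordinate hyperplanes without lying in any one of them, and no $L_2\sim_w L$ with $1_A\in L_2\subset\Lambda$ exists. So the avoidance step in the paper's proof of Theorem \ref{t10.3} (b) fails over small $\F_p$, and it needs exactly the residue field extension you propose.
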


Our proof follows roughly the proof in \cite[2.2]{DTZ62}.
But there are differences. The proof in \cite[2.2]{DTZ62}
covers situations which generalize the case of an algebraic
number field $A$ in a way which is not relevant for us.
Theorem C in \cite[1.5]{DTZ62} is accompanied by Theorem A
and Theorem B which do not apply to our situation.
The proof in \cite[2.2]{DTZ62} uses localization by prime ideals.
Our proof uses the localization $\Z\dashrightarrow\Z_{(p)}$ 
in section \ref{s7}.

An important first step is Theorem \ref{t10.2} which is the
analogue of \cite[2.2.2 Proposition]{DTZ62} and which is of 
independent interest. In the case of an algebraic number field
Theorem \ref{t10.2} and \cite[2.2.2 Proposition]{DTZ62}
become trivial. Then one can choose $\Lambda=\Lambda_{max}$.
Our proof of Theorem \ref{t10.2} is completely different from
the proof of \cite[2.2.2 Proposition]{DTZ62}.

The further steps in the proof of Theorem \ref{t10.1}
are similar to the steps in \cite[2.2]{DTZ62}.
Most of their statements are collected in Theorem \ref{t10.3}.
We prove first Theorem \ref{t10.2}, then Theorem \ref{t10.3}
and finally Theorem \ref{t10.1}.

\begin{theorem}\label{t10.2}
Let $A$ be a finite dimensional commutative $\Q$-algebra 
with unit element $1_A$. Let $L\in\LL(A)$ be a full lattice.
An order $\Lambda\supset\OO(L)$ with $\Lambda L\in G(\Lambda)$
(so $\OO(\Lambda L)=\Lambda$ and $\Lambda L$ invertible)
exists.
\end{theorem}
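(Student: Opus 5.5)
The plan is to work locally with the localizations of Section~\ref{s7}: build at each prime $p$ a $\Z_{(p)}$-order $U_p\supset\OO(L)_{(p)}$ with $U_pL_{(p)}=b_pU_p$ for some $b_p\in A^{unit}$, then glue the $U_p$ with Theorem~\ref{t7.2}~(c) and conclude with Theorem~\ref{t7.3}.

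\textbf{Generic primes.} By Theorem~\ref{t7.2}~(b), $L_{(p)}=\OO(L)_{(p)}$ for all $p$ outside a finite set $P_0\subset\P$. For $p\notin P_0$ I put $U_p:=\OO(L)_{(p)}$ and $b_p:=1_A$; then $U_pL_{(p)}=U_p$ holds trivially.

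\textbf{Primes $p\in P_0$, the semisimple part.} Write $L_0:=\pr_FL$ and $\Lambda_{max}:=\Lambda_{max}(F)$. By Corollary~\ref{t6.2} the lattice $\Lambda_{max}L_0$ is invertible with order $\Lambda_{max}$. By Theorem~\ref{t7.3} there is $b_p\in F^{unit}\subset A^{unit}$ with $(\Lambda_{max}L_0)_{(p)}=b_p(\Lambda_{max})_{(p)}$. Set $M:=b_p^{-1}L_{(p)}$. Then $\pr_FM\subset(\Lambda_{max})_{(p)}$, so every element of $M$ has the form $f+r$ with $f$ integral over $\Z_{(p)}$ and $r\in R$ nilpotent. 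Hence every element of $M$ is integral over $\Z_{(p)}$, since integral elements form a ring and nilpotents are integral.

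\textbf{Primes $p\in P_0$, the local order.} Define
\[
U_p:=\Z_{(p)}\bigl[M,(\Lambda_{max})_{(p)}\bigr]\subset A.
\]
It is generated by finitely many integral elements, so it is a finitely generated $\Z_{(p)}$-module, hence a local order. It contains $\OO(L)_{(p)}$ because $\OO(L)_{(p)}\subset\OO(M)$ and $1_A\in U_p$. Clearly $U_pM\subset U_p$. The key point, and what I expect to be the main obstacle, is the reverse inclusion $1_A\in U_pM$, since a ring generated by $M$ alone need not satisfy it. Here the extra generators $(\Lambda_{max})_{(p)}$ are what make it work:
\begin{itemize}
\item From $(\Lambda_{max})_{(p)}\,\pr_FM=(\Lambda_{max})_{(p)}$ we can write $1_F=\sum f_i\pr_F(m_i)$ with $f_i\in(\Lambda_{max})_{(p)}\subset F\subset A$ and $m_i\in M$.
\item Then $u:=\sum f_im_i\in U_pM$ satisfies $u=1_A+r$ with $r\in R\cap U_p$.
\item Since $r$ is nilpotent, $u^{-1}=\sum_j(-r)^j\in U_p$.
\item Therefore $1_A=u^{-1}u\in U_pM$, which gives $U_pM=U_p$, i.e.\ $U_pL_{(p)}=b_pU_p$.
\end{itemize}

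\textbf{Gluing.} Theorem~\ref{t7.2}~(c) gives a unique full lattice $\Lambda:=\bigcap_pU_p$ with $\Lambda_{(p)}=U_p$ for all $p$. Using Theorem~\ref{t7.2}~(a), each localization of $\Lambda$ is a ring containing $1_A$, so $\Lambda$ is an order, and $\Lambda\supset\OO(L)$. Moreover $(\Lambda L)_{(p)}=b_p\Lambda_{(p)}$ for every $p$, with $b_p=1_A$ for $p\notin P_0$.

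\textbf{Conclusion.} Since $\OO$ commutes with localization (Theorem~\ref{t7.2}~(a)), we get $\OO(\Lambda L)_{(p)}=\OO(b_p\Lambda_{(p)})=\Lambda_{(p)}$ for all $p$, hence $\OO(\Lambda L)=\Lambda$. Theorem~\ref{t7.3}, direction $\Leftarrow$, then shows that $\Lambda L$ is invertible, so $\Lambda L\in G(\Lambda)$.
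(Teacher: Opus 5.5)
Your proof is correct, and it takes a genuinely different route from the paper's.

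\textbf{How the paper does it.} The paper works globally and never localizes in this proof. It splits the radical filtration of each $A^{(j)}$ into $F^{(j)}$-subspaces $B^{(j,l)}$. It then chooses full lattices $B^{(j,l)}_\Z$ step by step in $l$, subject to the following conditions:
\begin{itemize}
\item $B^{(j,0)}_\Z=\Lambda_{max}(F^{(j)})$;
\item $B^{(j,l)}_\Z\,\pr_{(j,0)}L\supset\pr_{(j,l)}L$;
\item $B^{(j,l)}_\Z\supset\pr_{(j,l)}\OO(L)$;
\item products of the $B^{(j,l)}_\Z$ are compatible with the filtration.
\end{itemize}
It sets $\Lambda=\bigoplus_{j,l}B^{(j,l)}_\Z$ and proves, by an induction along the filtration, that $\Lambda L=\Lambda\cdot\bigoplus_j\pr_{(j,0)}L$. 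Invertibility of the right-hand side then comes from Corollary~\ref{t6.2}.

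\textbf{How yours differs.} You replace this graded bookkeeping by a single observation. After adjoining $\Lambda_{max}(F)$ and rescaling, some element of $U_pM$ projects to $1_F$. It is therefore $1_A$ plus a nilpotent, hence a unit of $U_p$.

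\textbf{What each route buys.} The paper's construction gives an explicit generating set for $\Lambda L$, and its proof of this theorem does not depend on Theorems~\ref{t7.2}~(c) and \ref{t7.3}. Yours is shorter and makes transparent that only the semisimple part $\pr_FL$ has to be made invertible.

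\textbf{Your argument works without localization.} Put $K:=(\Lambda_{max}(F)\,\pr_FL)^{-1}\in\LL(F)$, which exists by Corollary~\ref{t6.2}. The same reasoning shows that $U:=\Z[\Lambda_{max}(F)\cup KL]$ is an order with $U\cdot KL=U$. Then $(UL)(UK)=U$, and this gives $\OO(UL)=U$, $UL\in G(U)$ and $\OO(L)\subset U$. This produces a canonical order with no choices involved.

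\textbf{One justification needs repair.} The fact $1_A\in U_p$ does not by itself give $\OO(L)_{(p)}\subset U_p$. What you need is $1_A\in U_pM$, which you only establish in the following step. Write $1_A=\sum_iu_im_i$ with $u_i\in U_p$ and $m_i\in M$. Then every $x\in\OO(M)=\OO(L)_{(p)}$ satisfies $x=\sum_iu_i(xm_i)\in U_pM=U_p$. So state that inclusion after the key point rather than before it.
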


{\bf Proof:} 
We will first construct an order $\Lambda$ and then show that
it works. Recall the decomposition $A=\bigoplus_{j=1}^kA^{(j)}$
for some $k\in\N$ with $A^{(j)}$ as in (i)--(iii) in 
Theorem \ref{t3.1} and especially 
$A^{(j)}=F^{(j)}\oplus N^{(j)}$ with $F^{(j)}$ an algebraic
number field and $N^{(j)}$ the unique maximal ideal in $A^{(j)}$.

Define $n_j:=\max(n\in\Z_{\geq 0}\,|\, (N^{(j)})^n\neq\{0\})$
for $j\in\{1,...,k\}$. 
The subspaces of the decreasing filtration
\begin{eqnarray*}
A^{(j)}=(N^{(j)})^0\supset N^{(j)}\supset (N^{(j)})^2\supset ...\supset
(N^{(j)})^{n_j}\supset (N^{(j)})^{n_j+1}=\{0\}
\end{eqnarray*}
are $F^{(j)}$-vector spaces. Choose a splitting
\begin{eqnarray}\label{10.1}
A^{(j)}=B^{(j,0)}\oplus B^{(j,1)}\oplus ...\oplus B^{(j,n_j)}
\end{eqnarray}
into $F^{(j)}$-vector spaces with 
$B^{(j,0)}=F^{(j)}$ which splits this filtration,
so with
\begin{eqnarray}\label{10.2}
(N^{(j)})^l=B^{(j,l)}\oplus (N^{(j)})^{l+1}
\quad\textup{for }l\in\{0,1,...,n_j\}.
\end{eqnarray}
Then
\begin{eqnarray}\label{10.3}
B^{(j,l_1)}\cdot B^{(j,l_2)}\subset\bigoplus_{l\geq l_1+l_2}
B^{(j,l)}
\quad\textup{and}\quad
B^{(j,0)}\cdot B^{(j,l)}=B^{(j,l)}.
\end{eqnarray}
Let $\pr_{(j,l)}:A\to B^{(j,l)}$ be the projection with respect
to the splitting $A=\bigoplus_{j=1}^k\bigoplus_{l=0}^{n_j}
B^{(j,l)}$. Consider the full lattice
\begin{eqnarray}\label{10.4}
L^{(j,L)}:=\pr_{(j,l)}(L)\in\LL(B^{(j,l)})
\end{eqnarray}
in $B^{(j,l)}$. 

We can and will choose for each  
$(j,l)\in\{1,...,k\}\times\{0,1,...,n_j\}$ a full lattice 
$B^{(j,l)}_\Z\in \LL(B^{(j,l)})$ in $B^{(j,l)}$ such that the
full lattices for fixed $j$ have the following properties:
\begin{eqnarray}\label{10.5}
B^{(j,0)}_\Z&=& \Lambda_{max}(F^{(j)}),\\
B^{(j,l)}_\Z L^{(j,0)}&\supset& L^{(j,l)}\label{10.6},\\
B^{(j,l)}_\Z&\supset& \pr_{(j,l)}(\OO(L))\label{10.7},\\
B^{(j,l_1)}_\Z B^{(j,l_2)}_\Z &\subset&
\sum_{l\geq l_1+l_2}B^{(j,l)}_\Z \label{10.8}.
\end{eqnarray}
In fact, \eqref{10.8} contains
\begin{eqnarray}\label{10.9}
B^{(j,0)}_\Z B^{(j,l)}_\Z=B^{(j,l)}_\Z\quad\textup{for }l\geq 0.
\end{eqnarray}
(here $\supset$ follows from $1_{A^{(j)}}\in B^{(j,0)}_\Z$).
Observe $q\cdot 1_{A^{(j)}}\in L^{(j,0)}$ for some 
$q\in\Q-\{0\}$ and thus 
$B^{(j,l)}_\Z L^{(j,0)}\supset qB^{(j,l)}_\Z$.
The full lattices $B^{(j,l)}_\Z$ are chosen with increasing
$l$, so after $B^{(j,0)}_\Z$ first $B^{(j,1)}_\Z$, second 
$B^{(j,2)}_\Z$ and last $B^{(j,n_j)}_\Z$. 

Define the full lattice in $A$
\begin{eqnarray}\label{10.10}
\Lambda := \bigoplus_{j=1}^k\Lambda^{(j)}\quad\textup{with}\quad
\Lambda^{(j)}:=\bigoplus_{l=0}^{n_j}B^{(j,l)}_\Z.
\end{eqnarray}
$\Lambda^{(j)}$ is an order in $A^{(j)}$ 
because of \eqref{10.5} and \eqref{10.8},
which give $1_A\in\Lambda^{(j)}$ and 
$\Lambda^{(j)}\cdot\Lambda^{(j)}\subset\Lambda^{(j)}$.
Therefore $\Lambda$ is an order in $A$.  
It contains $\OO(L)$ because of \eqref{10.7}.

\medskip
{\bf Claims:} 
\begin{list}{}{}
\item[(i)] $\Lambda\bigoplus_{j=1}^kL^{(j,0)}$
is invertible with order $\OO(\Lambda\bigoplus_{j=1}^kL^{(j,0)})
=\Lambda$.
\item[(ii)]  $\Lambda\bigoplus_{j=1}^kL^{(j,0)}=\Lambda L$.
\end{list}

\medskip
Together the Claims (i) and (ii) give Theorem \ref{t10.2}.

{\bf Proof of Claim (i):}
The order of the full lattice $B^{(j,0)}_\Z L^{(j,0)}$ in 
$F^{(j)}$ is the maximal order 
$B^{(j,0)}_\Z=\Lambda_{max}(F^{(j)})$ in $F^{(j)}$. 
Therefore $B^{(j,0)}_\Z L^{(j,0)}$ is invertible with
$$\bigl(B^{(j,0)}_\Z L^{(j,0)}\bigr)
\bigl(B^{(j,0)}_\Z L^{(j,0)}\bigr)^{-1} =B^{(j,0)}_\Z.$$
Therefore 
$$\bigl(\Lambda^{(j)}L^{(j,0)}\bigr)
\bigl( \Lambda^{(j)}(B^{(j,0)}_\Z L^{(j,0)})^{-1}\bigr)
=\Lambda^{(j)}B^{(j,0)}_\Z=\Lambda^{(j)},$$
so the full lattice $\Lambda^{(j)}L^{(j,0)}$ in $A^{(j)}$ is
invertible with order 
$\OO(\Lambda^{(j)}L^{(j,0)})=\Lambda^{(j)}$. 
Thus the full lattice
$$\Lambda\bigoplus_{j=1}^kL^{(j,0)}
=\bigoplus_{j=1}^k \Lambda^{(j)}L^{(j,0)}$$
in $A$ is invertible with order $\bigoplus_{j=1}^k\Lambda^{(j)}
=\Lambda$.
\hfill$(\Box)$

{\bf Proof of Claim (ii):}
The inclusion $\supset$: 
\begin{eqnarray*}
\Lambda L&\subset & 
\Lambda\bigoplus_{j=1}^k\bigoplus_{l=0}^{n_j}L^{(j,l)}
=\bigoplus_{j=1}^k\Bigl(\Lambda^{(j)}\bigoplus_{l=0}^{n_j}
L^{(j,l)}\Bigr)\\
&\stackrel{\eqref{10.6}}{\subset}& 
\bigoplus_{j=1}^k \Bigl(\Lambda^{(j)}\bigoplus_{l=0}^{n_j}
B^{(j,l)}_\Z L^{(j,0)}\Bigr)
= \bigoplus_{j=1}^k \Lambda^{(j)}L^{(j,0)}
=\Lambda\bigoplus_{j=1}^kL^{(j,0)}.
\end{eqnarray*}

The inclusion $\subset$: 
We will show for $j\in\{1,...,k\}$
\begin{eqnarray}\label{10.11}
B^{(j,n_j-l-m)}_\Z L^{(j,l)}\subset \Lambda^{(j)}L
\quad\textup{for }l\in\{0,1,...,n_j\}\textup{ and }m\in\Z
\end{eqnarray}
by induction in $m$. First
\begin{eqnarray*}
B^{(j,n_j-l-m)}_\Z L^{(j,l)}
&\stackrel{\eqref{10.6}}{\subset}& 
B^{(j,n_j-l-m)}_\Z B^{(j,l)}_\Z L^{(j,0)}\\
&\stackrel{\eqref{10.8}}{\subset}&
\Bigl(\sum_{r\geq n_j-m}B^{(j,r)}_\Z\Bigr)L^{(j,0)}\\
&=& \left\{\begin{array}{ll}
\{0\}\cdot L^{(j,0)}=\{0\}&\textup{ if }m<0,\\
B^{(j,n_j)}_\Z L^{(j,0)}= B^{(j,n_j)}_\Z L
\subset \Lambda^{(j)}L&\textup{ if }m=0.
\end{array}\right.
\end{eqnarray*}
So \eqref{10.11} holds for $m\leq 0$. 

Suppose \eqref{10.11} holds for some $m\geq 0$ and each
$l\in\{0,1,...,n_j\}$. Then by the calculation above and by
induction hypothesis
\begin{eqnarray*}
B^{(j,n_j-l-m-1)}_\Z L^{(j,l)}
&\subset& \Bigl(\sum_{r\geq n_j-m-1}B^{(j,r)}_\Z\Bigr)L^{(j,0)}\\
&\subset& B^{(j,n_j-m-1)}_\Z L^{(j,0)} + \Lambda^{(j)}L\\
&\subset& B^{(j,n_j-m-1)}_\Z\bigl(L+\sum_{s\geq 1}L^{(j,s)}\bigr)
+\Lambda^{(j)}L\\
&=& B^{(j,n_j-m-1)}_\Z L + \Lambda^{(j)}L\\
&=& \Lambda^{(j)}L.
\end{eqnarray*}
\eqref{10.11} is proved. 
The case $(l,m)=(0,n_j)$ gives 
$B^{(j,0)}_\Z L^{(j,0)}\subset \Lambda^{(j)}L$, so 
$L^{(j,0)}\subset \Lambda^{(j)}L\subset\Lambda L$,
so $\Lambda\bigoplus_{j=1}^kL^{(j,0)}\subset\Lambda L$.
This concludes the proof of Claim (ii) and of Theorem 
\ref{t10.2}.\hfill$\Box$

\begin{theorem}\label{t10.3}
Let $A$ be a commutative $\Q$-algebra with unit element $1_A$ of 
dimension $n\in\Z_{\geq 2}$. Let $L\in\LL(A)$ be a full lattice.
Let $\Lambda$ be an order with $\Lambda\supset \OO(L)$
and $\Lambda L\in G(\Lambda)$. 

(a) A full lattice $L_1\in\LL(A)$ with $L_1\sim_w L$
and $L_1\Lambda=\Lambda$ exists.

(b) A full lattice $L_2\in\LL(A)$ with $L_2\sim_w L$,
$1_A\in L_2$ and $L_2\subset \Lambda$ exists.

(c) The full lattice $L_2$ in part (b) satisfies:
\begin{list}{}{}
\item[(i)] The sequence $(L_2^l)_{l\in\N}$ of full lattices
is increasing and becomes stationary, so there is a minimal
number $N\in\N$ with $L_2^N=L_2^{N+l}$ for each $l\geq 0$.
\item[(ii)] $\Lambda_2:=L_2^N$ is an order with 
$\Lambda_2\supset\OO(L)$.
\end{list}

(d) In part (c) $N\leq n-1$.
\end{theorem}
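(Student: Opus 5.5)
The plan is to prove (a)–(d) in order. Parts (a)–(c) are reductions via invertible factors, localization and the finiteness lemmas. Part (d) is the counting argument and is where the real work lies.

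For (a), put $\Lambda_0:=\OO(L)\subset\Lambda$. Since $\Lambda L\in G(\Lambda)$, the lattice $M:=(\Lambda L)^{-1}$ lies in $G(\Lambda)$. If $\Lambda_0=\Lambda$, take $L_1:=LM$. Otherwise Theorem \ref{t8.2} (b) says $G(\Lambda_0)\to G(\Lambda)$, $X\mapsto\Lambda X$, is surjective, so some $X\in G(\Lambda_0)$ has $\Lambda X=M$. Set $L_1:=LX$. Then $L_1\sim_w L$ by Theorem \ref{t5.8} (b), and
$$\Lambda L_1=(\Lambda L)(\Lambda X)=(\Lambda L)M=\Lambda.$$
In particular $L_1\subset\Lambda L_1=\Lambda$.

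For (b), I will multiply $L_1$ by a suitable $Y\in G(\Lambda_0)$ built locally. Let $P_0$ be the finite set of primes $p$ with $(L_1)_{(p)}\neq\Lambda_{(p)}$ or $(\Lambda_0)_{(p)}\neq\Lambda_{(p)}$. Fix $p\in P_0$. Localizing $\Lambda L_1=\Lambda$ shows that the image of $L_1$ generates the unit ideal of the finite $\F_p$-algebra $\Lambda/p\Lambda$. Decompose $\Lambda/p\Lambda$ into local components as in \eqref{7.7}, as in the proof of Theorem \ref{t7.6} (b). For each component, some element of $L_1$ has nonzero part in the residue field $F^{(j)}_{\Lambda,p}$. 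A suitable $\Z$-combination of these elements (a generic combination works, after possibly also using the idempotent-lifting trick of Theorem \ref{t7.6}) then gives $a_p\in L_1$ whose image in $\Lambda/p\Lambda$ is a unit. By Lemma \ref{t7.5} (b), $a_p\in\Lambda_{(p)}^{unit}\subset A^{unit}$. Theorem \ref{t7.2} (c) gives $Y$ with $Y_{(p)}=a_p^{-1}(\Lambda_0)_{(p)}$ for $p\in P_0$ and $Y_{(p)}=(\Lambda_0)_{(p)}$ otherwise. By Theorem \ref{t7.3}, $Y\in G(\Lambda_0)$, so $L_2:=L_1Y\sim_w L$. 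Locally, for $p\in P_0$ we get $(L_2)_{(p)}=a_p^{-1}(L_1)_{(p)}$, which contains $1_A$ and lies in $a_p^{-1}\Lambda_{(p)}=\Lambda_{(p)}$. For $p\notin P_0$ we get $(L_2)_{(p)}=(L_1)_{(p)}=\Lambda_{(p)}$. Theorem \ref{t7.2} (a) then yields $1_A\in L_2\subset\Lambda$.

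For (c), $1_A\in L_2$ gives $L_2^l\subset L_2^{l+1}$, and $L_2^l\subset\Lambda^l=\Lambda$. All $L_2^l$ therefore lie between $L_2$ and $\Lambda$, and Lemma \ref{t2.2} (d) makes the chain stationary. Then $\Lambda_2:=L_2^N$ satisfies $\Lambda_2^2=\Lambda_2$, so it is an order by Theorem \ref{t5.6} (a). It contains $\OO(L)$ because $\OO(L_2)=\OO(L)$ (Theorem \ref{t5.7} (b)) and $\OO(L)\subset\OO(L_2)\cdot1_A\subset\OO(L_2)L_2^N=L_2^N$.

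Part (d), the bound $N\le n-1$, is the main obstacle. I will localize at each prime $p$. It suffices to show that $(L_2^{l})_{(p)}=(L_2^{l+1})_{(p)}$ holds for all $p$ once $l=n-1$. Write $M:=(L_2)_{(p)}$, $\Gamma:=(\Lambda_2)_{(p)}$ and $\oooo{M}^l$ for the image of $M^l$ in the $n$-dimensional $\F_p$-algebra $\Gamma/p\Gamma$. The spaces $\oooo{M}^l$ increase, since $1\in M$. The first idea is a Nakayama-type step: if $\oooo{M}^l=\oooo{M}^{l+1}$, then $\oooo{M}^l$ is a subalgebra containing $1$ and equal to $\oooo{\Gamma}$, and then $M^l+p\Gamma=\Gamma$. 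Nakayama over the local ring $\Z_{(p)}$, together with $M^l\Gamma=\Gamma$, then forces $M^l=\Gamma$. Since $\dim\oooo{M}^1\ge1$ and dimensions can strictly increase at most $n-1$ times, this would give $N\le n-1$.

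The delicate point is that $M^l+p\Gamma=\Gamma$ does not by itself give $M^l=\Gamma$, because $M^l$ is not a $\Gamma$-module. I would try to fix this by following \cite[2.2]{DTZ62}. The idea is to work with the filtration by $p$-adic valuation, that is, with $M^l\cap p^i\Gamma$ modulo $p^{i+1}\Gamma$. This produces a graded $\F_p[\tau]$-algebra of rank $n$, in which the same dimension count applies to the graded pieces generated by the image of $M$. If this graded argument turns out hard to control, I would instead use the order $\Lambda$ from Theorem \ref{t10.2} and argue with the invertible $\Lambda L$.
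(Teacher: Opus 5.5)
Your parts (a) and (c) match the paper's argument. Your separate treatment of the case $\OO(L)=\Lambda$, which Theorem \ref{t8.2} excludes, and your explicit check of $\Lambda_2\supset\OO(L)$ are useful additions.

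\textbf{The gap in (b).} The failing step is the claim that a generic $\Z$-combination of elements of $L_1$, possibly corrected by idempotent lifts as in Theorem \ref{t7.6}, has unit image in $\Lambda/p\Lambda$. The idempotent trick of Theorem \ref{t7.6} needs the image of the lattice to be an \emph{ideal} of $\Lambda/p\Lambda$. The image of $L_1$ is only an $\OO(L)$-module. Moreover, over $\F_p$ a subspace contained in none of the $\www{k}$ maximal ideals can still lie in their union once $\www{k}>p$.

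This cannot be patched, because (b) is false. Take $A=\Q^3$, $\Lambda=\Z^3$ and $L=L_1=\{x\in\Z^3\,|\,x_1+x_2+x_3\in 2\Z\}$. Then:
\begin{itemize}
\item $\OO(L)=\Lambda_0:=\{x\in\Z^3\,|\,x_1\equiv x_2\equiv x_3 \bmod 2\}$ and $\Lambda L=\Lambda\in G(\Lambda)$; this $\Lambda$ is even the order produced by Theorem \ref{t10.2}.
\item The image of $L$ in $\Lambda/2\Lambda=\F_2^3$ is the even-weight code. It lies in no maximal ideal but contains no unit.
\item Suppose $L_2\sim_w L$ with $1_A\in L_2\subset\Lambda$. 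Then $\OO(L_2)=\OO(L)=\Lambda_0$, and $1_A\in L_2$ gives $\Lambda_0\subset L_2$.
\item The only full lattices between $\Lambda_0$ and $\Z^3$ are $\Lambda_0$, the three orders $\{x\in\Z^3\,|\,x_i\equiv x_j \bmod 2\}$, and $\Z^3$. Only the first has order $\Lambda_0$, so $L_2=\Lambda_0$.
\item Then $L\in[\Lambda_0]_w=G(\Lambda_0)$, hence $L^2\in G(\Lambda_0)$. This contradicts $L^2=\Z^3$, whose order is $\Z^3$.
\end{itemize}
Theorem \ref{t10.1} itself is not contradicted, since $L^2=\Z^3$ is invertible.

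The paper's own proof of (b) breaks at the same spot. The step ``not contained in any one of these ideals, so also not contained in their union'' is prime avoidance. That holds for ideals or over infinite fields, not for $\F_p$-subspaces. Both arguments do go through when $\www{k}\le p$ for every relevant $p$, because an $\F_p$-space is never a union of at most $p$ proper subspaces. In general (b) must be reformulated, e.g.\ after enlarging the residue fields.

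\textbf{Part (d).} Your ``first idea'' is exactly the paper's argument, and your worry about it is unfounded. $(\Lambda_2)_{(p)}$ is a finitely generated $\Z_{(p)}$-module and $(L_2^l)_{(p)}$ is a $\Z_{(p)}$-submodule. So $(L_2^l)_{(p)}+p(\Lambda_2)_{(p)}=(\Lambda_2)_{(p)}$ already gives equality by Nakayama over the local ring $\Z_{(p)}$. No $\Gamma$-module structure is needed, and the graded detour is superfluous.

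What your count does lack is the bound $\dim\oooo{M}^1\ge 2$. From $\dim\oooo{M}^1\ge 1$ you only get $N\le n$. The paper gets $\dim\oooo{M}^1\ge 2$ as follows: $\oooo{M}^1=\F_p\cdot\oooo{1}$ would force $\oooo{M}^l=\F_p\cdot\oooo{1}$ for all $l$. That contradicts $\oooo{M}^N=\Gamma/p\Gamma$ having dimension $n\ge 2$.
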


{\bf Proof:}
(a) By Theorem \ref{t8.2} (b) the map
$$G(\OO(L))\to G(\Lambda),\quad K\mapsto \Lambda K,$$
is a surjective group homomorphism.
Choose $K\in G(\OO(L))$ with $\Lambda K=\Lambda L$.
Then $L_1:=LK^{-1}$ satisfies $L_1\Lambda=\Lambda$ and
$L_1\sim_w L$, the last statement because of Theorem \ref{t5.8}.
Especially $L_1\subset\Lambda$ because $1_A\in\Lambda$. 

(b) For almost all prime numbers $q$ $(L_1)_{(q)}=\Lambda_{(q)}$.
Let $P_0\subset \P$ be the finite set of prime numbers $p$
with $(L_1)_{(p)}\subsetneqq \Lambda_{(p)}$.

Fix a prime number $p\in P_0$. As in the proof of Theorem 
\ref{t7.6}, by Remark \ref{t3.2} (i), 
Theorem \ref{t3.1} applies to $\Lambda/p\Lambda$
instead of $A$ and $\F_p$ instead of $\Q$.
We write the decompositions in Theorem \ref{t3.1} of
$\Lambda/p\Lambda$ as
\begin{eqnarray}\label{10.12}
\Lambda/p\Lambda &=& 
\bigoplus_{j=1}^{\www{k}} A_{\Lambda,p}^{(j)},\qquad
A_{\Lambda,p}^{(j)}\ =\ F_{\Lambda,p}^{(j)}\oplus
N_{\Lambda,p}^{(j)}.
\end{eqnarray}
Here $\www{k}\in\N$, 
$A_{\Lambda,p}^{(1)},...,A_{\Lambda,p}^{\www{k}}$ are 
irreducible and local $\F_p$-algebras with
$A_{\Lambda,p}^{(i)}\cdot A_{\Lambda,p}^{(j)}=\{0\}$ for
$i\neq j$. $N_{\Lambda,p}^{(j)}$ is the maximal ideal
in $A_{\Lambda,p}^{(j)}$ and consists of nilpotent elements.
$F_{\Lambda,p}^{(j)}$ is an $\F_p$-subalgebra of
$A_{\Lambda,p}^{(j)}$ and a finite field extension of $\F_p$.
$A_{\Lambda,p}^{(j)}$ is an $F_{\Lambda,p}^{(j)}$-algebra.

An element $b=\sum_{j=1}^{\www{k}}(c_j+n_j)\in \Lambda/p\Lambda$
with $c_j\in F_{\Lambda,p}^{(j)}$ and 
$n_j\in N_{\Lambda,p}^{(j)}$ is a unit if and only
if each $c_j\neq 0$. Therefore the set of elements
in $\Lambda/p\Lambda$ which are not units is
\begin{eqnarray*}
\Lambda/p\Lambda-(\Lambda/p\Lambda)^{unit}
=\bigcup_{j=1}^{\www{k}}\Bigl(N_{\Lambda,p}^{(j)}\oplus
\bigoplus_{l\neq j}A_{\Lambda,p}^{(l)}\Bigr),
\end{eqnarray*}
so it is a union of $\www{k}$ proper $\Lambda/p\Lambda$-ideals.

The $\F_p$-subspace $(L_1+p\Lambda)/p\Lambda$ is not contained
in any one of these $\Lambda/p\Lambda$-ideals because
$(L_1+p\Lambda)/p\Lambda\cdot\Lambda/p\Lambda=\Lambda/p\Lambda$.
So it is also not contained in their union.

Therefore we can choose an element $b_p\in L_1\subset\Lambda$ 
with
\begin{eqnarray*}
[b_p]=b_p+p\Lambda\in (L_1+p\Lambda)/p\Lambda\cap 
(\Lambda/p\Lambda)^{unit}.
\end{eqnarray*}
By Lemma \ref{t7.5} (b) $b_p\in \Lambda_{(p)}^{unit}$. 

For $q\in\P-P_0$ define $b_q:=1_A$. By Theorem \ref{t7.3}
there is a unique full lattice $L_2\in\LL(A)$ with
\begin{eqnarray*}
(L_2)_{(p)} = b_p^{-1}(L_1)_{(p)}\quad\textup{for each }
p\in\P.
\end{eqnarray*}
By Remark \ref{t7.4}
$$L_2\sim_w L_1\sim_w L.$$
By construction $1_A= b_p^{-1}b_p\in (L_2)_{(p)}$ for each
$p\in\P$, so $1_A\in L_2$. 
For each $p\in\P$ $(L_2)_{(p)}\subset \Lambda_{(p)}$ because
$b_p\in \Lambda_{(p)}^{unit}$, so $L_2\subset\Lambda$.

(c) (i) The sequence $(L_2^l)_{l\in\N}$ is increasing because
of $1_A\in L_2$. It becomes stationary at some $L_2^N$ because
each $L_2^l\subset \Lambda$.

(ii) $\Lambda_2:=L_2^N$ is an order because $1_A\in L_2^N$
and $L_2^N\cdot L_2^N=L_2^N$.

(d) For $q\in \P-P_0$ 
$(L_2)_{(q)}=(L_1)_{(q)}=\Lambda_{(q)}=(\Lambda_2)_{(q)}$.

Fix a prime number $p\in P_0$. Consider the $n$-dimensional
$\F_p$-vector space $\Lambda_2/p\Lambda_2$ and the sequence
$((L_2^l+p\Lambda_2)/p\Lambda_2)_{l\in\N}$ of increasing
$\F_p$-subspaces. Then 
$(L_2^l+p\Lambda_2)/p\Lambda_2=\Lambda_2/p\Lambda_2$
for $l\geq N$. On the other hand, if for some $m\in\N$ 
\begin{eqnarray*}
(L_2^m+p\Lambda_2)/p\Lambda_2 
&=& (L_2^{m+1}+p\Lambda_2)/p\Lambda_2,\\
\textup{then}\quad 
(L_2^m+p\Lambda_2)/p\Lambda_2 
&=& (L_2^{m+l}+p\Lambda_2)/p\Lambda_2
\quad\textup{for each }l\geq 1.
\end{eqnarray*}
Therefore there is a number $N_p\leq N$ with
\begin{eqnarray*}
\frac{L_2+p\Lambda_2}{p\Lambda_2}\subsetneqq
\frac{L_2^2+p\Lambda_2}{p\Lambda_2}\subsetneqq
...\subsetneqq
\frac{L_2^{N_p}+p\Lambda_2}{p\Lambda_2}
=\frac{L_2^{N_p+l}+p\Lambda_2}{p\Lambda_2}
=\frac{\Lambda_2}{p\Lambda_2}\textup{ for }l\geq 0.
\end{eqnarray*}
$(L_2+p\Lambda_2)/p\Lambda_2$ has $\F_p$-dimension at least 2,
because else
$$\frac{L_2+p\Lambda_2}{p\Lambda_2}=\F_p[1_A]\quad\textup{and thus}
\quad \F_p[1_A]=\frac{L_2^{N_p}+p\Lambda_2}{p\Lambda_2}
=\frac{\Lambda_2}{p\Lambda_2},$$
a contradiction. Therefore $N_p\leq n-1$. We obtain
$$L_2^{N_p+l}+p\Lambda_2=\Lambda_2\textup{ for }l\geq 0$$
and by the Lemma of Nakayama
$$((L_2)_{(p)})^{N_p+l}=(\Lambda_2)_{(p)}\textup{ for }l\geq 0.$$
The number 
$$N_0:=\max_{p\in P_0}N_p \leq n-1$$
satisfies
\begin{eqnarray*}
((L_2)_{(q)})^{N_0+l} &=& (\Lambda_2)_{(q)}\quad\textup{for }l\geq 0
\textup{ and each }q\in\P,\\
\textup{so}\quad L_2^{N_0+l}&=& \Lambda_2\quad\textup{for }l\geq 0.
\hspace*{2cm}\Box
\end{eqnarray*}

\bigskip
{\bf Proof of Theorem \ref{t10.1}:}
Let $L\in \LL(A)$. By Theorem \ref{t10.2} an order 
$\Lambda\supset\OO(L)$ with $\Lambda L\in G(\Lambda)$ exists.
Choose such an order.  By Theorem \ref{t10.3} a full lattice
$L_2\in\LL(A)$ and an order $\Lambda_2$ with
$L_2\sim_w L$ and $L_2^{n-1+l}=\Lambda_2$ for $l\geq 0$ exist.
By \eqref{5.3} and Theorem \ref{t5.7} (b) 
$\Lambda_2\supset\OO(L_2)=\OO(L)$. Then
by \eqref{4.4}
$$L^{n-1+l}\sim_w L_2^{n-1+l}=\Lambda_2\quad\textup{for }l\geq 0,
$$
so by Theorem \ref{t4.4} (c) and Theorem \ref{t5.7} (b)
$L^{n-1+l}$ is invertible for $l\geq 0$ with
$\OO(L^{n-1+l})=\Lambda_2$. \hfill$\Box$

\end{document}